\newtheorem{thm}{Theorem}[section]
\newtheorem{cor}[thm]{Corollary}
\newtheorem{lem}[thm]{Lemma}
\newtheorem{prop}[thm]{Proposition}
\theoremstyle{definition}
\newtheorem{defn}[thm]{Definition}
\theoremstyle{remark}
\newtheorem{rem}[thm]{Remark}
\numberwithin{equation}{section}
\def\R{\mathbb R}
\def\N{\mathbb N}
\newcommand{\pa} {\partial}
\newcommand{\al} {\alpha}
\newcommand{\Om} {\Omega}
\newcommand{\la} {\lambda}
\newcommand{\La} {\Lambda}
\newcommand{\noi} {\noindent}
\newcommand{\ra} {\rightarrow}
\newcommand{\var} {\varepsilon}
\newcommand{\WW} {W^{1,p(x)}_0(\Om)}
\newcommand{\DD}  {\displaystyle}
\newcommand{\TT} {\Delta_t}
\newcommand{\X}{\mathbb W}
\newcommand{\ii}{\int_0^T\!\!\!\int_\Om}
\newcommand{\eqdef}{\stackrel{{\rm{def}}}{=}}
\begin{document}

%
%
%
%
%
%
%
%
%

\title[Quasilinear parabolic problem involving the $p(x)$-Laplacian]{Quasilinear parabolic problem with\\ $p(x)$-Laplacian: existence, uniqueness\\ of weak solutions and stabilization}

\author[J. Giacomoni]{Jacques Giacomoni}

\address{Laboratoire de Math\'ematiques et leurs applications Pau, UMR CNRS 5142,\\
Universit\'e de Pau et Pays de l'Adour,\\ Avenue de l'Universit\'e, BP 1155, 64013 Pau Cedex,\\
 France}

\email{jacques.giacomoni@univ-pau.fr}

\author[S. Tiwari]{Sweta Tiwari}

\address{Instituto de Matematicas,\\
 Universidad Nacional Aut\'onoma de M\'exico,\\
  Circuito exterior, Ciudad Universitaria, 04510, M\'exico, D.F.}

\email{swetatiwari.iitd@gmail.com}

\author[G. Warnault]{Guillaume Warnault}

\address{Laboratoire de Math\'ematiques et leurs applications Pau, UMR CNRS 5142,\\ Universit\'e de Pau et Pays de l'Adour,\\ Avenue de l'Universit\'e, BP 1155, 64013 Pau Cedex,\\ France}

\email{guillaume.warnault@univ-pau.fr}

\subjclass{Primary 35K55, 35J62; Secondary 35B65}

\keywords{$p(x)$-laplacian, parabolic equation, stabilization}

\date{June 9, 2015}

\begin{abstract}
We discuss the existence and uniqueness of the weak solution
of the following quasilinear parabolic equation 
\begin{equation*}
\left\{
\begin{array}{ll}
u_t-\Delta _{p(x)}u = f(x,u)&\text{in } Q_T\eqdef (0,T)\times\Om, \\
u  = 0 &\text{on  }  \Sigma_T\eqdef (0,T)\times\pa\Om,\\
u(0,x)=u_0(x)&\text{in }\Om
\end{array}\right. \tag{$P_T$} \end{equation*}
involving the $p(x)$-Laplacian operator. Next, we discuss the global behaviour of solutions and in particular some stabilization properties.
\end{abstract}

\maketitle

\section{Introduction}
\noindent Our main goal in this paper is to study the existence, uniqueness and global behaviour of the weak solutions to the following parabolic equation involving the $p(x)$-Laplacian operator
\begin{equation*}\label{Pt}
\left\{
\begin{array}{ll}
u_t-\Delta _{p(x)}u = f(x,u)&\text{in } Q_T= (0,T)\times\Om, \\
u  = 0 &\text{on  }  \Sigma_T= (0,T)\times\pa\Om,\\
u(0,x)=u_0(x)&\text{in }\Om
\end{array}\right. \tag{$P_T$}\end{equation*}
where $\Om\subset\R^d$, $d\geq 2$, is a smooth bounded domain, $p:\Om \ra [1,+\infty]$ and  $f: \Omega\times \R\rightarrow \R$ is a Caratheodory function. \\
Let $\mathcal P(\Om)$ be the family of all measurable functions $q:\Omega \ra [1,+\infty]$ and set
$$\mathcal P^{log}(\Om)\eqdef \left\{q\in\mathcal P(\Om)\,:\, \frac{1}{q} \mbox{ globally log-H\"older continuous}\right\}.$$
In particular, for any $p\in\mathcal P^{\log}(\Om)$, there exists a function $w$ such that 
$$\forall (x,y)\in \Omega^2,\ |p(x)-p(y)|\leq w(|x-y|) \ \mbox{and} \ \limsup_{t\ra0^+} \,-w(t)\ln t <+\infty.$$
There is an abundant litterature devoted to questions on existence and \break uniqueness of solutions to $(P_T)$ for $p(x)\equiv p$ (see for instance \cite{BBG} and references
therein).
More recently, parabolic and elliptic problems with variable exponents have been studied
quite extensively, see for example \cite{AMS,AS1,AS2,AS3,CLR,MR,R}.
The importance of investigating these problems lies in their occurrence in modeling 
various physical problems involving strong anisotropic phenomena related to
electrorheological fluids (an important class of non-Newtonian fluids)
\cite{AMS,RR,R}, image processing \cite{CLR}, elasticity \cite{Zhikov}, the processes of filtration in complex media \cite{AntShm}, stratigraphy problems \cite{GV} and also mathematical biology \cite{Frag}. 

Regarding the current litterature, we bring in this paper new results about the regularity of weak solutions and about the behaviour of global weak solutions. In particular, we investigate the question of asymptotic convergence to a steady state. To prove the existence of weak solutions,  we follow a semi-group approach, involving a semi-discretization in time method,  that provides the existence of mild solutions belonging to $C([0,T];\X)$ and $C([0,T];L^\infty(\Omega))$. Then the existence of  subsolutions and supersolutions and the weak comparison principle reveal the stabilization property for a suitable class of nonlinearities  $f$. 

In our knowledge, the existence of mild solutions and the convergence to a stationary solution for quasilinear parabolic equations with variable exponents were not investigated previously in the litterature and all the corresponding results brought in the present paper are new. To establish these results, we use some former contributions about the validity of a strong comparison principle (see \cite{Zh}), the regularity of solutions (see in particular \cite{AMS,Fa0,FS}) and some extensions proved in appendix C. We point out that other aspects of global behaviour of weak solutions (extinction in finite time, localization, blow-up in finite time) are discussed in \cite{AS2,AS3}. In \cite{AS1}, the Galerkin method is used alternatively to prove existence of weak solutions. In the same way, in the case where $f\equiv 0$ or $f$ depends only to $(t,x)\in Q_T$, the authors of \cite{AZ,AntShm} use a perturbation method to establish the existence of solutions of $(P_T)$.

Concerning quasilinear elliptic equations with variable exponents, striking results about existence  and non-exitence of eigenvalues contrasting with the constant exponent case are proved in \cite{MR,RS} showing the complex nature of the $p(x)$-laplacian operator. Furthermore, in \cite{RS}, the authors established also multiplicity results for combined concave-convex function $f$.

Before going further, we recall the definitions and useful results on the variable exponent Lebesgue and Sobolev spaces. For more details, we refer to the book \cite{DHHR} and the paper \cite{KR}.
Let $p\in \mathcal P(\Omega)$. We define the semimodular
$$\rho_p(u)\eqdef \int_{\Om\backslash \Omega_\infty}|u|^{p(x)}dx+ess\sup_{\Omega_\infty}|u(x)|$$
where $\Omega_\infty=\{x\in \Omega\,|\, p(x)=\infty\}$. Then the variable exponent Lebesgue space is defined as follows:
$$L^{p(x)}(\Om)\eqdef \left\{u\,|\,u\mbox{ is measurable on $\Om$ and } \rho_p(\lambda u)<\infty\ \mbox{ for some
}\lambda>0\right\}.$$
If $p\in L^\infty(\Omega)$, this definition is equivalent to (see Theorem 3.4.1 in \cite{DHHR})
$$L^{p(x)}(\Om)=\left\{u\,|\,u\mbox{ is measurable on $\Om$ and }\ \displaystyle\rho_p(u)<\infty\right\}.$$
This is a normed linear space equipped with the Luxemburg norm
$$\|u\|_{L^{p(x)}(\Om)}= \inf\left\{\la>0:\ \rho_p(\frac{u}{\lambda})\leq 1\right\}.$$
Define $p_{-}=\displaystyle\inf_{x\in\Om}p(x)$ and $p_{+}=\displaystyle\sup_{x\in\Om}p(x)$. Then
$$L^{p_+}(\Om)\subset L^{p(x)}(\Om) \subset L^{p_-}(\Om).$$
We denote by $p_c$ the conjugate exponent of $p$ defined as
$$p_c(x)=\frac{p(x)}{p(x)-1}.$$
We have the following well-known properties on $L^{p(x)}$ spaces (see \cite{KR}).
\begin{prop}\label{P1}
Let $p\in L^\infty(\Omega)$. Then for any $u\in L^{p(x)}(\Om)$ we have:
\begin{enumerate}[(i)]
\item $\rho_p(u/\|u\|_{L^{p(x)}(\Om)})=1$.
\item $\|u\|_{L^{p(x)}(\Om)}\ra 0$\; \text{if and only if}\;  $\rho_p(u)\ra 0$.
\item $L^{p_c(x)}(\Omega)$ is the dual space of $L^{p(x)}(\Om)$.
 \end{enumerate}
\end{prop}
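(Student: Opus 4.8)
The plan is to establish the three listed properties of the Luxemburg norm, which are standard consequences of the definition of the semimodular and the norm, by exploiting the continuity and monotonicity of $\lambda\mapsto\rho_p(u/\lambda)$.

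First I would record the basic structural facts about $\rho_p$ on $L^{p(x)}(\Om)$ when $p\in L^\infty(\Om)$: the map $\lambda\mapsto\rho_p(u/\lambda)$ is non-increasing on $(0,\infty)$, it is convex in $u$, and—crucially—it is continuous in $\lambda$ on $(0,\infty)$ for fixed $u\in L^{p(x)}(\Om)$. Continuity follows from the dominated convergence theorem: if $\lambda_n\to\lambda>0$ then $|u/\lambda_n|^{p(x)}\to|u/\lambda|^{p(x)}$ pointwise a.e.\ and, for $n$ large, $|u/\lambda_n|^{p(x)}\le (2/\lambda)^{p_+}|u|^{p(x)}+ (2/\lambda)^{p_-}\chi_{\{|u|\le 1\}}\in L^1$; one handles the $\Om_\infty$ term separately since $\operatorname{ess\,sup}_{\Om_\infty}|u/\lambda_n|=\lambda_n^{-1}\operatorname{ess\,sup}_{\Om_\infty}|u|$ depends continuously on $\lambda_n$. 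I would also note that $\rho_p(u/\lambda)<\infty$ for every $\lambda>0$ once it is finite for one $\lambda$ (again because $p\in L^\infty$), so the norm is finite on all of $L^{p(x)}(\Om)$.

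For (i), set $\la_0=\|u\|_{L^{p(x)}(\Om)}$ for $u\neq 0$. By definition of the infimum there is a sequence $\la_n\downarrow\la_0$ with $\rho_p(u/\la_n)\le 1$; by continuity $\rho_p(u/\la_0)\le 1$. If $\rho_p(u/\la_0)<1$, then by continuity $\rho_p(u/\la)<1$ for some $\la<\la_0$, contradicting that $\la_0$ is the infimum; hence $\rho_p(u/\la_0)=1$. For (ii), the implication that $\|u\|\to 0$ forces $\rho_p(u)\to 0$ uses that for $\|u\|\le 1$ one has $\rho_p(u)\le \|u\|_{L^{p(x)}}^{p_-}\to 0$ (the "unit ball" modular inequality, proved from (i) by scaling: $\rho_p(u)=\rho_p(\|u\|\cdot(u/\|u\|))\le \|u\|^{p_-}\rho_p(u/\|u\|)=\|u\|^{p_-}$); conversely, if $\rho_p(u_n)\to 0$ then for any fixed $\var\in(0,1)$, $\rho_p(u_n/\var)\le \var^{-p_+}\rho_p(u_n)\le 1$ for $n$ large, so $\|u_n\|\le\var$ eventually. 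Finally, (iii) is the duality statement: I would exhibit the pairing $\langle v,u\rangle=\int_\Om uv\,dx$ for $v\in L^{p_c(x)}(\Om)$, show it is bounded on $L^{p(x)}(\Om)$ via the Hölder inequality for variable exponents (norm $\le 2\|u\|_{L^{p(x)}}\|v\|_{L^{p_c(x)}}$), and then show every bounded functional arises this way—by restricting to characteristic functions to produce, via the classical Radon–Nikodym argument, a locally integrable density $v$, and then checking $v\in L^{p_c(x)}(\Om)$ with the right norm by testing against functions that nearly saturate Hölder.

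The routine parts are the monotonicity and the scaling inequalities; the one step deserving genuine care is the continuity of $\lambda\mapsto\rho_p(u/\lambda)$, since the semimodular here has the non-integral sup-term over $\Om_\infty$ grafted onto the usual integral, and one must verify the dominated-convergence argument is not disturbed by points where $p(x)$ is large (this is exactly where $p\in L^\infty(\Om)$, i.e.\ $p_+<\infty$, is used). For (iii), the main obstacle is the surjectivity half of the duality: showing that the density produced by Radon–Nikodym actually lies in $L^{p_c(x)}(\Om)$ with comparable norm, which requires a careful choice of test functions adapted to the variable exponent and a truncation argument to justify the computation on sets where the relevant quantities could a priori be infinite. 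Since all three items are quoted from \cite{KR}, I would present the argument compactly, citing \cite{DHHR,KR} for the finer points of the duality and focusing the exposition on (i) and (ii).
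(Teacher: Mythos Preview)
The paper does not actually prove this proposition: it is stated as a list of well-known facts with the citation ``(see \cite{KR})'' and no argument is given. Your sketch is a correct outline of the standard proof that one finds in the cited references, so in that sense there is nothing to compare---you are supplying what the paper deliberately omits.

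Two minor remarks on your write-up. First, since the hypothesis is $p\in L^\infty(\Omega)$, the set $\Omega_\infty=\{p(x)=\infty\}$ is empty (up to a null set), so the essential-supremum term in $\rho_p$ vanishes and all your care about handling it separately is unnecessary; this also simplifies the domination in the continuity argument. Second, in (i) you implicitly assume $u\neq 0$; the statement as written in the paper does not exclude $u=0$, where the quotient is undefined, so you may wish to note this tacit convention explicitly. Otherwise your plan for (i) and (ii) is clean, and for (iii) your intention to cite \cite{DHHR,KR} for the Radon--Nikodym half of the duality is exactly what the paper itself does.
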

\noindent Proposition \ref{P1} {\it (i)} implies that: if $\|u\|_{L^{p(x)}(\Om)}\geq 1$,
\begin{equation}\label{a}
\|u\|_{L^{p(x)}(\Om)}^{p_{-}}\leq \rho_p(u)\leq\|u\|_{L^{p(x)}(\Om)}^{p_{+}}
\end{equation}
and if $\|u\|_{L^{p(x)}(\Om)}\leq 1$
\begin{equation}\label{b}
\|u\|_{L^{p(x)}(\Om)}^{p_{+}}\leq \rho_p(u)\leq\|u\|_{L^{p(x)}(\Om)}^{p_{-}}.
\end{equation}
Moreover, we have also the generalized H\"older inequality: for $p\in \mathcal P(\Om)$, there exists a constant $C=C(p_+,p_-)\geq 1$ such that  for any $f\in L^{p(x)}(\Om)$ and $g\in L^{p_c(x)}(\Om)$
\begin{equation}\label{hi}
 \int_\Om |f(x)g(x)|dx\leq C\|f\|_{L^{p(x)}(\Om)}\|g\|_{L^{p_c(x)}(\Om)}.
 \end{equation}
\noindent The corresponding Sobolev space is defined as follows:
$$W^{1,p(x)}(\Om)\eqdef \left\{u\in L^{p(x)}(\Om):|\nabla u|\in L^{p(x)}(\Om)\right\}.$$
A natural norm defined on $W^{1,p(x)}(\Om)$ is
$$\|u\|_{W^{1,p(x)}}=\|u\|_{L^{p(x)}(\Om)}+\|\nabla u\|_{L^{p(x)}(\Om)}.$$
For the sake of convenience, we define $\X=\WW$, the closure of $C^\infty_0(\Om)$ in $W^{1,p(x)}(\Om)$ for $p\in \mathcal P^{log}(\Omega)\cap L^\infty(\Omega)$. Since the domain $\Om$ is a bounded domain, the Poincar\'e inequality holds and thus we define the norm on $\X$ as $\|u\|_\X=\|\nabla u\|_{L^{p(x)}(\Om)}$.\\
$L^{p(x)}(\Om)$, $W^{1,p(x)}(\Om)$ and $\X$ are  Banach spaces. Moreover they are separable if $p\in L^\infty(\Omega)$
and
reflexive if $1<p_-\leq p_+<\infty$ (see \cite{KR}).
Furthermore we have the following Sobolev embedding Theorem (see \cite{DHHR}):
\begin{thm}\label{SE1}
Let $p\in  \mathcal P^{log}(\Omega)$ satisfies $1\leq p_-\leq p_+<d$. Then,\break
$W^{1,p(x)}(\Om)\hookrightarrow L^{\al(x)}(\Om)$ for any $\al\in L^\infty(\Omega)$  such that for all $x\in \Om$, $\al(x)\leq p^*(x)=\frac{d\,p(x)}{ d-p(x)}.$
Also the previous embedding is compact for $\al(x)<p^*(x)-\varepsilon$ a.e. in $\Om$ for any $\varepsilon>0$.
\end{thm}
The paper is organized as follows. The next section (Section 2) contains the statements of our main results on the existence, uniqueness, regularity of solutions to $(P_T)$  (see Theorems \ref{PT1}, \ref{PT3}, \ref{continuity u with f}) and on the global behaviour of solutions (see Theorems \ref{PT3} and \ref{stabilization}). In Section 3, we deal with the existence of weak solutions to the auxilary problem $(S_T)$. Main results concerning the existence of weak solutions to $(P_T)$ are established in Section 4. Finally the existence of mild solutions and stabilization properties are proved in Section 5. The appendices A, B contain some technical lemmata about monotonicity and compactness properties of $p(x)$-Laplacian operator. In Appendix C, we establish some new regularity results ($L^\infty$-bound) about quasilinear elliptic equations involving $p(x)$-laplacian used in Sections 3-5.

\section{Main results}
\noindent In the rest of the paper, we assume that $p\in \mathcal P^{log}(\Om)$ such that 
$$\frac{2 d}{d+2}<p_{-}\leq p_+< d.$$
First we consider the following problem:
\begin{equation*}\label{St}
\left\{\begin{array}{ll}
u_t-\Delta _{p(x)}u = h(t,x) &\text{in } Q_T,  \\
 u  = 0 &\text{on  }
 \Sigma_T,\\
u(0,x)=u_0(x)&\text{in }\Om,
\end{array}\right.\tag{$S_T$}
 \end{equation*}
where $T>0$, $h\in L^2(Q_T)\cap L^q(Q_T)$, $q>\displaystyle \frac{d}{p_-}$. Considering the initial data in $u_0\in\X\cap L^\infty(\Om)$, we study the weak solution to $(S_T)$ defined as follows:
\begin{defn}\label{defi1}
A weak solution to $(S_T)$ is any function $u\in L^\infty(0,T;\X)$ such that $u_t\in L^2(Q_T)$ and satisfying for any $\phi\in
C^\infty_0(Q_T)$
\begin{equation*}
\ii u_t\phi\,dxdt+\ii|\nabla u|^{p(x)-2}\nabla u\cdot\nabla\phi\,dxdt=\ii
h(t,x)\phi\,dxdt
\end{equation*}
and $u(0,.)=u_0$ a.e. in $\Omega$.
\end{defn}
\noindent Similarly we define the weak solutions to the problem $(P_T)$ as follows:
\begin{defn}\label{ws2}
A solution to $(P_T)$ is a function $u\in L^\infty(0,T;\X)$ such that $u_t\in L^2(Q_T)$, $f(.,u)\in
L^\infty(0,T;L^2(\Omega))$ and for any $\phi\in C^\infty_0(Q_T)$
\begin{equation*}
\ii u_t\phi\,dxdt+\ii|\nabla u|^{p(x)-2}\nabla u\cdot\nabla\phi\,dxdt=\ii
f(x,u)\phi\,dxdt
\end{equation*}
and $u(0,.)=u_0$ a.e. in $\Omega$.
\end{defn}
\noindent Hence with the above definitions, we establish the following local existence results: 
\begin{thm}\label{sol for (S_t)}
Let $T>0$, $u_0\in \X\cap L^\infty(\Om)$ and $h\in L^2(Q_T)\cap L^q(Q_T)$, $q>\frac{d}{p_-}$. Then, $(S_T)$ admits a
unique solution $u$ in the sense of Definition \ref{defi1}. Moreover $u\in C([0,T];\X).$
\end{thm}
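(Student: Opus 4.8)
The plan is to prove existence by a semi-discretization in time (Rothe's method), which is announced as the paper's guiding strategy, and then to recover continuity in $\X$ and uniqueness from energy estimates and monotonicity. Fix $N\in\N$, set $\TT=T/N$ and $t_k=k\TT$, and let $h_k\eqdef \frac{1}{\TT}\int_{t_{k-1}}^{t_k}h(s,\cdot)\,ds\in L^2(\Om)\cap L^q(\Om)$. Starting from $u^0=u_0$, I would define $u^k\in\X\cap L^\infty(\Om)$ recursively as the weak solution of the elliptic problem
\begin{equation*}
\frac{u^k-u^{k-1}}{\TT}-\Delta_{p(x)}u^k=h_k \quad\text{in }\Om,\qquad u^k=0\text{ on }\pa\Om.
\end{equation*}
Existence and uniqueness of each $u^k$ follows from the fact that $v\mapsto v/\TT-\Delta_{p(x)}v$ is a strictly monotone, coercive, continuous operator from $\X$ to its dual (standard Browder--Minty, using the monotonicity lemmas of Appendix A and the reflexivity of $\X$ since $1<p_-\le p_+<\infty$); the $L^\infty$-bound on $u^k$ is exactly the kind of elliptic regularity statement proved in Appendix C, applied with right-hand side $h_k+u^{k-1}/\TT\in L^q(\Om)$, $q>d/p_-$.

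Next I would derive the a priori estimates uniform in $N$. Testing the $k$-th equation with $u^k-u^{k-1}$ and using the elementary inequality $a\cdot(a-b)\ge \frac1{p(x)}(|a|^{p(x)}-|b|^{p(x)})$ (convexity of $t\mapsto |t|^{p(x)}$) gives, after summation over $k$, a bound of the form
\begin{equation*}
\sum_{k=1}^{N}\TT\left\|\frac{u^k-u^{k-1}}{\TT}\right\|_{L^2(\Om)}^2+\sup_{k}\int_\Om \frac{|\nabla u^k|^{p(x)}}{p(x)}\,dx\le \int_\Om\frac{|\nabla u_0|^{p(x)}}{p(x)}\,dx+C\|h\|_{L^2(Q_T)}^2,
\end{equation*}
hence, via \eqref{a}--\eqref{b}, a uniform bound on $(u^k)$ in $\X$ and on the discrete time-derivative in $L^2(Q_T)$. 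Introduce the piecewise-constant interpolant $\bar u_N$ and the piecewise-linear interpolant $\hat u_N$ in time; the estimates above say $\bar u_N$ is bounded in $L^\infty(0,T;\X)$, $\hat u_N$ is bounded in $H^1(0,T;L^2(\Om))$, and $\|\bar u_N-\hat u_N\|_{L^2(Q_T)}\to0$. By the compactness lemma of Appendix B (an Aubin--Lions-type statement adapted to $\X$), a subsequence converges strongly in $L^2(Q_T)$ and weakly-$*$ in $L^\infty(0,T;\X)$ to some $u$, with $\hat u_N\rightharpoonup u$ in $H^1(0,T;L^2(\Om))$, so $u_t\in L^2(Q_T)$ and $u(0,\cdot)=u_0$. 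The uniform $L^\infty(\Om)$-bounds pass to the limit, giving $u\in L^\infty(Q_T)$.

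The main obstacle is passing to the limit in the nonlinear term $|\nabla \bar u_N|^{p(x)-2}\nabla\bar u_N$: weak convergence of $\nabla\bar u_N$ is not enough. I would handle this by the Minty--Browder monotonicity trick: the discrete equations give $\int_0^T\!\!\int_\Om |\nabla\bar u_N|^{p(x)-2}\nabla\bar u_N\cdot\nabla\phi = \int_0^T\!\!\int_\Om(\bar h_N-\partial_t\hat u_N)\phi$ for test functions $\phi$, so the flux $\chi_N\eqdef|\nabla\bar u_N|^{p(x)-2}\nabla\bar u_N$ converges weakly in $L^{p_c(x)}(Q_T)$ to some $\chi$ satisfying the limit equation with $\chi$ in place of $|\nabla u|^{p(x)-2}\nabla u$; then using $\limsup_N \int|\nabla\bar u_N|^{p(x)-2}\nabla\bar u_N\cdot\nabla\bar u_N \le \int\chi\cdot\nabla u$ (obtained by testing the discrete equation with $u^k$, summing, and using the $L^2$-strong convergence to control the time term) together with the monotonicity of the $p(x)$-Laplacian from Appendix A, one concludes $\chi=|\nabla u|^{p(x)-2}\nabla u$ a.e., so $u$ is a weak solution in the sense of Definition~\ref{defi1}.

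Finally, uniqueness: if $u_1,u_2$ are two solutions, subtract the equations, test with $u_1-u_2$ (legitimate since $u_i\in L^\infty(0,T;\X)$ with $\partial_t u_i\in L^2(Q_T)$, so $t\mapsto\|u_1-u_2\|_{L^2(\Om)}^2$ is absolutely continuous), and use $\int_\Om(|\nabla u_1|^{p(x)-2}\nabla u_1-|\nabla u_2|^{p(x)-2}\nabla u_2)\cdot\nabla(u_1-u_2)\ge0$ to get $\frac{d}{dt}\|u_1-u_2\|_{L^2(\Om)}^2\le0$; since they share the initial datum, $u_1\equiv u_2$. For the continuity $u\in C([0,T];\X)$, I would argue that $t\mapsto \int_\Om\frac{|\nabla u(t)|^{p(x)}}{p(x)}\,dx$ is continuous: the energy identity $\int_\Om\frac{|\nabla u(t)|^{p(x)}}{p(x)}\,dx+\int_0^t\!\!\int_\Om|u_t|^2 = \int_\Om\frac{|\nabla u_0|^{p(x)}}{p(x)}\,dx+\int_0^t\!\!\int_\Om h\,u_t$ (obtained by testing with $u_t\in L^2(Q_T)$, after a density argument justifying this choice of test function) shows the modular is continuous in $t$; combined with weak continuity of $u(t)$ in $\X$ and the uniform convexity of $\X$ (here $1<p_-\le p_+<\infty$), this upgrades to strong continuity in $\X$, i.e. $u\in C([0,T];\X)$.
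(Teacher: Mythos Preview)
Your overall strategy matches the paper's: Rothe discretization, the same a priori estimates from testing with $u^k-u^{k-1}$, piecewise constant/linear interpolants, Aubin--Simon compactness, and uniqueness by monotonicity. Two points deserve comment.

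\textbf{Identification of the nonlinear limit.} You invoke the Minty trick; the paper instead tests the discrete equation with $u_{\TT}-u$, obtains
\[
\int_0^T\langle\Delta_{p(x)}u_{\TT}-\Delta_{p(x)}u,\,u_{\TT}-u\rangle\,dt=o_{\TT}(1),
\]
and uses the quantitative monotonicity lemma in Appendix~B to deduce that $\nabla u_{\TT}\to\nabla u$ \emph{strongly} in $L^{p(x)}(Q_T)$, hence that the flux converges strongly. Both routes yield existence, but the paper's strong gradient convergence is not a detour: it is re-used in the continuity step (see below).

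\textbf{The $C([0,T];\X)$ step contains the real gap.} Your argument rests on the energy identity ``obtained by testing with $u_t\in L^2(Q_T)$, after a density argument''. But $u_t$ is \emph{not} an admissible test function: you only know $u_t\in L^2(Q_T)$, with no information on $\nabla u_t$, so the pairing $\int|\nabla u|^{p(x)-2}\nabla u\cdot\nabla u_t$ is undefined. Making this rigorous requires a chain-rule theorem for convex functionals (e.g.\ of Br\'ezis type for subdifferentials), not a routine density argument; this is precisely the delicate point. The paper avoids it: one inequality (the $\leq$ direction) is obtained by passing to the limit in the discrete energy identity, which is where the strong convergence of $|\nabla u_{\TT}|$ in $L^{p(x)}$ from Step~4 is essential; the reverse inequality is obtained by testing with the time difference quotient $\tau_k u(s)=\frac{u(s+k)-u(s)}{k}$ (a legitimate $\X$-valued test function), using convexity, and letting $k\to0$. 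Together these give the equality of the modulars, and then weak continuity plus modular convergence (Lemma~\ref{A5}) yield strong continuity in $\X$. If you keep the Minty approach, you lose the strong gradient convergence and hence the paper's route to the first inequality, so you would have to supply a genuine chain-rule argument instead.

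A minor point: you claim the $L^\infty(\Om)$ bounds on $u^k$ are uniform in $N$ and pass to the limit. They are not shown to be uniform (the elliptic bound depends on $\|u^{k-1}+\TT h_k\|_{L^q}$, which iterates), and the theorem does not assert $u\in L^\infty(Q_T)$; the $L^\infty$ regularity of each $u^k$ is used only to keep the iteration well-defined.
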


\begin{thm}\label{PT1}
Let $f: \Omega\times \R\rightarrow \R$ be a Caratheodory function satisfying the following two conditions:
\begin{enumerate}
\item[$\bf{(f_1)}$] $\ t\ra f(x,t)$ is locally Lipschitz uniformly in $x\in\Omega$;\medskip
\item[$\bf{(f_2)}$] there exists  $\alpha\in \R$  such that $x\rightarrow f(x,\alpha)\in L^2(\Om)\cap L^q(\Om)$, $q>\frac{d}{p_-}$.
\end{enumerate}
Assume in addition that one of the following hypotheses holds:
\begin{itemize}
\item[\bf{(H1)}] there exists a nondecreasing locally Lipschitz function $L_0$ such that
\begin{equation*}
  |f(x,v)|\leq  L_0(v), \quad a.e.\  (x,v)\in \Omega\times \R;
\end{equation*}
\item[\bf{(H2)}] there exist two
nondecreasing locally Lipschitz functions $L_1$ and $L_2$ such that
\begin{equation*}
 L_1(v)\leq f(x,v)\leq  L_2(v), \quad a.e.\ (x,v)\in \Omega\times \R.
\end{equation*}
\end{itemize}
Then, for any $u_0\in \X\cap L^\infty(\Omega)$,  there exists $\tilde T\in (0,+\infty]$ such that for any $T\in [0, \tilde T)$, $(P_T)$ admits a unique solution $u$ in sense of Definition \ref{ws2}. Moreover for
any
$r> 1$, $u\in C([0,T];L^r(\Om))\cap C([0,T];\X)$.
\end{thm}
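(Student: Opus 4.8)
The plan is to reduce $(P_T)$ to the linear-source problem $(S_T)$ of Theorem~\ref{sol for (S_t)} by a fixed-point argument, exploiting the local Lipschitz assumption $\bf{(f_1)}$ and the growth assumption $\bf{(H1)}$ or $\bf{(H2)}$ to control the nonlinearity. First I would fix $u_0 \in \X\cap L^\infty(\Om)$ and choose $M > \|u_0\|_{L^\infty(\Om)}$. For $T>0$ to be chosen small, I work in the complete metric space
$$
\mathcal{E}_{T,M} \eqdef \left\{ v \in C([0,T];L^2(\Om)) : \|v\|_{L^\infty(Q_T)} \leq M,\ v(0,\cdot)=u_0 \right\},
$$
with the $C([0,T];L^2(\Om))$ distance. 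Given $v \in \mathcal{E}_{T,M}$, set $h_v(t,x) \eqdef f(x,v(t,x))$. By $\bf{(f_2)}$ together with $\bf{(f_1)}$ (or directly with $\bf{(H1)}$/$\bf{(H2)}$ and the locally Lipschitz bounds on $L_0$, $L_1$, $L_2$), one has the pointwise bound $|f(x,v)| \leq |f(x,\alpha)| + C(M)|v-\alpha| \leq |f(x,\alpha)| + C(M)(M+|\alpha|)$, so $h_v \in L^\infty(0,T;L^2(\Om)\cap L^q(\Om)) \subset L^2(Q_T)\cap L^q(Q_T)$ with norm bounded uniformly over $\mathcal{E}_{T,M}$. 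Theorem~\ref{sol for (S_t)} then produces a unique weak solution $u = \Phi(v)$ of $(S_T)$ with source $h_v$, and $u \in C([0,T];\X)$.

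The next step is the a priori $L^\infty$ estimate needed to show $\Phi$ maps $\mathcal{E}_{T,M}$ into itself for $T$ small. Here I would test the equation for $u=\Phi(v)$ against $(u-M)^+$ (and symmetrically $(u+M)^-$ under $\bf{(H2)}$, or directly using $|f|\leq L_0$ under $\bf{(H1)}$), using $\|u_0\|_{L^\infty}<M$, to get an energy inequality of the form
$$
\frac{1}{2}\frac{d}{dt}\|(u-M)^+\|_{L^2(\Om)}^2 + \int_\Om |\nabla u|^{p(x)-2}\nabla u\cdot\nabla (u-M)^+\,dx \leq \int_\Om h_v(t,x)(u-M)^+\,dx.
$$
Dropping the nonnegative gradient term and using $|h_v|\leq C(M)$ on $Q_T$ yields $\|(u-M)^+(t)\|_{L^2}^2 \leq C(M)^2 t \cdot |\Om|\,$-type growth, so choosing $T = T(M,u_0)$ small forces $\|u\|_{L^\infty(Q_T)} \leq M$. (Alternatively one invokes the $L^\infty$-bound results quoted from the appendix and \cite{AMS,Fa0,FS}.) For the contraction, given $v_1,v_2 \in \mathcal{E}_{T,M}$ with $u_i = \Phi(v_i)$, subtract the two weak formulations, test with $u_1-u_2$, use monotonicity of the $p(x)$-Laplacian (Appendix A) to discard the principal-part difference, and estimate $\int_\Om |f(x,v_1)-f(x,v_2)|\,|u_1-u_2|\,dx \leq C(M)\|v_1-v_2\|_{L^2}\|u_1-u_2\|_{L^2}$ by $\bf{(f_1)}$; Gronwall then gives $\sup_{[0,T]}\|u_1-u_2\|_{L^2}^2 \leq C(M)\,T\,\sup_{[0,T]}\|v_1-v_2\|_{L^2}^2$, a contraction for $T$ small. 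Banach's fixed point theorem yields a unique $u \in \mathcal{E}_{T,M}$ with $\Phi(u)=u$, which is precisely a weak solution of $(P_T)$ in the sense of Definition~\ref{ws2}, and the regularity $u \in C([0,T];\X)$ is inherited from Theorem~\ref{sol for (S_t)}; the bound $f(\cdot,u)\in L^\infty(0,T;L^2(\Om))$ holds by the pointwise estimate above.

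Finally I would pass from this local solution to the maximal existence time: let $\tilde T$ be the supremum of all $T$ for which a solution exists, and note the standard continuation argument — if $\tilde T < \infty$ and $\sup_{t<\tilde T}\|u(t)\|_{L^\infty}$ were finite, one could restart from a time close to $\tilde T$ with the same $M$ and extend, contradicting maximality; uniqueness on $[0,\tilde T)$ follows by gluing the local uniqueness statements. The continuity $u\in C([0,T];L^r(\Om))$ for every $r>1$ comes from interpolating $u\in C([0,T];L^2(\Om))$ against the uniform $L^\infty(Q_T)$ bound on each $[0,T]\subset[0,\tilde T)$. The main obstacle I anticipate is the $L^\infty$-invariance step: making the truncation energy estimate rigorous requires knowing $(u-M)^+ \in L^2(0,T;\X)$ is an admissible test function (density of $C_0^\infty(Q_T)$ and the chain rule in $\X$), and for the $\bf{(H2)}$ case one must be careful that the lower and upper bounds $L_1,L_2$ interact correctly with both truncations; this is where the regularity machinery cited from \cite{AMS,Fa0,FS} and Appendix C does the real work, rather than the otherwise routine fixed-point scheme.
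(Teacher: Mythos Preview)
Your route is genuinely different from the paper's. The paper does not run a fixed-point argument at all: it repeats the implicit Euler scheme from the proof of Theorem~\ref{sol for (S_t)} with $h^n=f(x,u^{n-1})$, and obtains the uniform $L^\infty$ bound on the discrete iterates by comparing them with the solutions of the scalar ODEs $v_i'=L_i(v_i)$, $v_i(0)=(-1)^i\|u_0\|_\infty$ (so $\tilde T$ is explicitly the ODE blow-up time). Once $(u^n)$ is trapped between these barriers, $(h^n)$ is bounded in $L^2(Q_T)$ and the compactness argument of Theorem~\ref{sol for (S_t)} is recycled verbatim. Your contraction scheme is conceptually cleaner and reuses Theorem~\ref{sol for (S_t)} as a black box, at the price of a less explicit $\tilde T$.

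There is, however, a genuine gap in your $L^\infty$-invariance step. Testing against $(u-M)^+$ and dropping the gradient term yields only an estimate on $\|(u-M)^+(t)\|_{L^2(\Omega)}$; this does \emph{not} imply $\|u\|_{L^\infty(Q_T)}\leq M$ for small $T$, so $\Phi$ need not map $\mathcal{E}_{T,M}$ into itself on the basis of what you wrote. The results in Appendix~C are elliptic and do not rescue this. The correct fix is to compare with a time-dependent barrier: under {\bf(H1)}, with $K\eqdef L_0(M)\geq\|h_v\|_{L^\infty(Q_T)}$, the function $w(t)=\|u_0\|_\infty+Kt$ satisfies $w_t-\Delta_{p(x)}w=K\geq h_v$, and testing $(S_T)$ against $(u-w)^+$ gives $\tfrac{d}{dt}\|(u-w)^+\|_{L^2}^2\leq 0$, hence $u\leq w$ (and symmetrically $u\geq -w$); then $\|u\|_{L^\infty(Q_T)}\leq\|u_0\|_\infty+KT\leq M$ for $T$ small. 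Under {\bf(H2)} one uses the ODE solutions $v_1,v_2$ themselves as barriers. This is precisely the paper's barrier idea, transplanted from the discrete to the continuous level; once it is inserted, the rest of your contraction argument goes through.
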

Under additional hypothesis about the growth of $f$ and regularity of the intial data, we are able to prove the existence of global solutions. Precisely, we have the following result:
\begin{thm}\label{PT3}
Let $f$ be a Caratheodory function satisfying $\bf{(f_1)}$ and the additional condition:
\begin{enumerate}
\item[$\bf{(f_3)}$] there exists $C>0$ such that $\forall (x,s)\in\Omega\times\R$, $|f(x,s)|\leq C(1+|s|^\beta)$ where $\beta<p_{-}-1$.
\end{enumerate}
Assume in addition that one of the following conditions is valid:
\begin{itemize}
\item[\bf{(C1)}] $u_0\in\X$ such that $\Delta_{p(x)}u_0\in L^q(\Om)$ where $q> \frac{d}{p_-}$;
\item[\bf{(C2)}] $u_0\in C^1_0(\overline \Omega)$ and $p\in C^1(\overline{\Omega})$.
\end{itemize} 
Then, for any $T>0$, $(P_T)$ admits a unique weak solution in the sense of Definition \ref{ws2}. Moreover $u\in C([0,T];\X)$.
\end{thm}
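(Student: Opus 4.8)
The plan is to show that the local solution provided by Theorem \ref{PT1} cannot blow up in finite time, so that $\tilde T = +\infty$. The subgrowth condition $\bf{(f_3)}$ with $\beta < p_--1$ is exactly what prevents blow-up, so the heart of the matter is an \emph{a priori} bound on $\|u(t)\|_\X$ and $\|u(t)\|_{L^\infty(\Omega)}$ that is uniform on bounded time intervals. First I would check that the hypotheses of Theorem \ref{PT1} are met: $\bf{(f_1)}$ is assumed, and $\bf{(f_3)}$ immediately gives both $\bf{(f_2)}$ (take $\alpha=0$, noting $f(\cdot,0)\in L^\infty(\Omega)$) and hypothesis $\bf{(H1)}$ with $L_0(v)=C(1+|v|^\beta)$, which is nondecreasing in $|v|$ and locally Lipschitz. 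Also, each of $\bf{(C1)}$, $\bf{(C2)}$ forces $u_0\in\X\cap L^\infty(\Omega)$: under $\bf{(C1)}$ this follows from the elliptic $L^\infty$-regularity results of Appendix C applied to $-\Delta_{p(x)}u_0\in L^q(\Omega)$ with $q>d/p_-$; under $\bf{(C2)}$ it is immediate since $C^1_0(\overline\Omega)\subset\X\cap L^\infty(\Omega)$. So Theorem \ref{PT1} yields a maximal $\tilde T$ and a solution $u\in C([0,T];\X)\cap C([0,T];L^r(\Omega))$ for $T<\tilde T$.

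Next I would derive the energy estimate. Testing the equation (formally; rigorously one uses $u_t$ as the test function, which is legitimate since $u_t\in L^2(Q_T)$ and $u\in L^\infty(0,T;\X)$, or a Steklov-averaging argument) gives
\begin{equation*}
\int_0^t\!\!\int_\Omega |u_t|^2\,dxds + \frac{d}{dt}\int_\Omega \frac{|\nabla u|^{p(x)}}{p(x)}\,dx = \int_\Omega f(x,u)u_t\,dx.
\end{equation*}
Bounding the right-hand side by $\frac12\|u_t\|_{L^2(\Omega)}^2 + \frac12\|f(\cdot,u)\|_{L^2(\Omega)}^2$ and using $\bf{(f_3)}$ together with the Sobolev embedding of Theorem \ref{SE1} (so that $\|u\|_{L^{2\beta}(\Omega)}$ is controlled by $\|u\|_\X$, using $\beta<p_--1<p_-$), one gets a differential inequality for $E(t)\eqdef\int_\Omega |\nabla u|^{p(x)}/p(x)\,dx$ of the form $E'(t)\le a + b\,\Phi(E(t))$ where, crucially, $\Phi$ is \emph{subcritical}: because $\beta+1<p_-$, the nonlinear term $\|u\|_\X^{\beta+1}$ is dominated by a sublinear power of $E(t)$ (via \eqref{a}--\eqref{b} relating $\rho_p(\nabla u)$ and $\|\nabla u\|_{L^{p(x)}}$), so $E$ stays finite on every bounded interval. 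This yields $u\in L^\infty(0,T;\X)$ and $u_t\in L^2(Q_T)$ with bounds depending only on $T$, $\|u_0\|_\X$ and the data.

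Then I would upgrade to an $L^\infty$-bound. From the $\X$-bound and $\bf{(f_3)}$ one has $f(\cdot,u)\in L^\infty(0,T;L^{q}(\Omega))$ for a suitable $q>d/p_-$ (again since $\beta<p_-$ keeps the Sobolev exponent in range), so $u$ solves $(S_T)$ with right-hand side $h=f(\cdot,u)\in L^2(Q_T)\cap L^q(Q_T)$; invoking Theorem \ref{sol for (S_t)} and, more importantly, a Moser/De Giorgi-type $L^\infty$-estimate (the parabolic analogue of the Appendix C results) gives $\|u\|_{L^\infty(Q_T)}\le M(T)$ for each finite $T$. With uniform $\X$- and $L^\infty$-bounds on $[0,\tilde T)$, the standard continuation argument contradicts maximality unless $\tilde T=+\infty$: indeed, if $\tilde T<\infty$ we can take $u(\tilde T^-)\in\X\cap L^\infty(\Omega)$ as new initial data and extend the solution beyond $\tilde T$. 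Uniqueness is inherited from Theorem \ref{PT1}, and the regularity $u\in C([0,T];\X)$ likewise.

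The main obstacle will be making the \emph{a priori} estimates fully rigorous in the variable-exponent setting: the interplay between the modular $\rho_p$ and the Luxemburg norm (the two-sided inequalities \eqref{a}--\eqref{b}) means the energy functional $E(t)$ and $\|u\|_\X$ are only comparable up to exponents $p_-$ and $p_+$, so the differential inequality must be set up carefully to ensure the exponent on the nonlinear term stays strictly below $1$ after all conversions; and the parabolic $L^\infty$-bound is not stated in the excerpt, so one must either cite it or reduce it to the elliptic estimates of Appendix C via the time-discretization scheme used to construct the solution. Handling the two cases $\bf{(C1)}$ and $\bf{(C2)}$ uniformly — in particular getting the needed regularity of $u_0$ under $\bf{(C2)}$ where $p\in C^1(\overline\Omega)$ — is a secondary technical point.
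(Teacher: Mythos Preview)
Your approach differs substantially from the paper's, and it has genuine gaps. The paper does \emph{not} run a continuation argument from Theorem \ref{PT1}; instead it constructs \emph{time-independent} sub- and supersolutions $\underline u\le u_0\le\overline u$ of the \emph{stationary} problem $-\Delta_{p(x)}u=f(x,u)$, and then reruns the implicit time-discretization with these as barriers, obtaining $\underline u\le u^n\le\overline u$ for all $n$ and hence a uniform $L^\infty$ bound directly, without any parabolic regularity theory. The hypotheses $\bf{(C1)}$ and $\bf{(C2)}$ are used precisely to build these barriers: under $\bf{(C1)}$ one solves $-\Delta_{p(x)}\overline u=|\Delta_{p(x)}u_0|+|f(x,\overline u)|$ (and analogously for $\underline u$), which requires $\Delta_{p(x)}u_0\in L^q$; under $\bf{(C2)}$ one takes $\overline u=w_\lambda$ from Lemma \ref{31YY} for large $\lambda$, and the $C^1_0$ regularity of $u_0$ is needed to compare $|u_0|$ with $w_\lambda$ via the lower bound $w_\lambda\gtrsim\lambda^{1/(p_+-1+\mu)}\,\mathrm{dist}(x,\partial\Omega)$. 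In your outline these hypotheses are demoted to merely ensuring $u_0\in L^\infty(\Omega)$, which misses their actual role.

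Your energy argument as written is also incorrect. After Young's inequality on $\int f(x,u)u_t$ you get $\|f(\cdot,u)\|_{L^2}^2\lesssim 1+\|u\|_\X^{2\beta}$, and since $E(t)\gtrsim\|u\|_\X^{p_-}$ (for large norm) the resulting differential inequality is $E'\lesssim 1+E^{2\beta/p_-}$; the exponent $2\beta/p_-$ is \emph{not} forced below $1$ by $\beta<p_--1$ (take any $p_->2$ and $\beta$ between $p_-/2$ and $p_--1$), so finite-time blow-up of the inequality is not excluded. The exponent $\beta+1$ you invoke only appears if you write $\int f(x,u)u_t=\frac{d}{dt}\int F(x,u)$ with $F$ the primitive, yielding the algebraic relation $E(t)\le C+C\|u(t)\|_\X^{\beta+1}$; since $\beta+1<p_-$ this does bound $\|u\|_\X$, but that is not the argument you wrote. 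Finally, even with the $\X$-bound in hand, your upgrade to $L^\infty$ rests on a parabolic De Giorgi--Moser estimate that is not proved in the paper (Appendix C is purely elliptic); the paper's barrier construction bypasses this entirely.
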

\begin{rem}\ 
\begin{enumerate}
\item Theorem \ref{PT3} is still valid, under the condition {\bf (C2)}, replacing $\bf{(f_3)}$ by the hypotheses on $f$: 
\begin{enumerate}
\item[$\bf{(f_4)}$] there exists $\zeta\in \R$ such that $x\rightarrow f(x,\zeta)\in L^\infty(\Omega)$; 
\item[$\bf{(f_5)}$]$\lim_{|s|\rightarrow +\infty} \frac{|f(x,s)|}{|s|^{p_--1}}=0.$
\end{enumerate}
\item Under an additional asymptotic super homogeneous growth assumption on $f$ and for initial data large enough, blow up in finite time of solutions can also occur. For instance, let $f(x,v)=v^q$ with $q>p^+$ and define the energy functional
\begin{equation*}
E(u)\eqdef\int_\Omega\frac{\vert \nabla u\vert^{p(x)}}{p(x)}\,dx-\int_\Omega\frac{u^{q+1}}{q+1}\,dx.
\end{equation*}
Then, using a well-known energy method and for any initial data $u_0$ satisfying $E(u_0)<0$, the weak solution to $(P_T)$ blows up in finite time. For further discussions of global behaviour of solutions (blow up, localization of solutions, extinction of solutions) to quasilinear anisotropic  parabolic  equations involving variable exponents, we refer to \cite{AS2} and \cite{AS3}.
\end{enumerate}
\end{rem}
\noindent Next, we investigate the asymptotic behaviour of global solutions, in particular the convergence to a stationary solution. For that we appeal the theory of maximal accretive operators in Banach spaces (see Chapters 3 and 4 in \cite{Ba}) that provides the existence of mild solutions. Precisely, observing that the operator $A\eqdef-\Delta_{p(x)}$, with Dirichlet boundary conditions, is m-accretive in $L^\infty(\Omega)$ with
\begin{equation*}
{\mathcal D}(A)=\left\{u\in\X\cap L^\infty(\Omega)\,\vert\, Au\in L^\infty(\Omega)\right\}
\end{equation*} 
as the domain of the operator $A$,
we get the above results which essentially follow from Theorems \ref{sol for (S_t)} and \ref{PT1} with Theorem 4.2 (page 130) and Theorem 4.4 (page 141) in \cite{Ba}: 
\begin{thm}\label{continuity de u} Let $T>0$, $h\in L^\infty(Q_T)$ and let $u_0$ be in $\X\cap\overline{{\mathcal D}(A)}^{L^\infty}$. Then,
\begin{enumerate}
\item[(i)]the unique weak solution $u$ to $(S_T)$ belongs to $\mathcal C([0,T]; \mathcal C_0(\overline{\Omega}))$.
\item[(ii)] If $v$ is another mild solution to $(S_T)$  with the initial datum $v_0\in\X\cap\overline{{\mathcal D}(A)}^{L^\infty}$ and the right-hand side $k\in L^\infty(Q_T)$, then the following estimate holds:
\begin{equation}
\|u(t)-v(t)\|_{L^\infty(\Omega)}\leq \|u_0-v_0\|_{L^\infty(\Omega)}+\int_0^t\|h(s)-k(s)\|_{L^\infty(\Omega)}\, {\rm d}s,\quad 0\leq t\leq T.\label{estimation2}
\end{equation}
\item[(iii)] If $u_0\in {\mathcal D}(A)$ and $h\in W^{1,1}(0,T;L^\infty(\Omega))$ then $u\in W^{1,\infty}(0,T;L^\infty(\Omega))$ and $\Delta_{p(x)} u\in L^\infty(Q_T)$, and the following estimate holds:
\begin{equation}
\left \|\frac{\partial u}{\partial t}(t)\right \|_{L^\infty(\Omega)}\leq \|\Delta_{p(x)} u_0+h(0)\|_{L^\infty(\Omega)}+\int_0^T\left \|\frac{\partial h}{\partial t}(t)\right \|_{L^\infty(\Omega)}{\rm d}\tau.\label{estimationbiss2}
\end{equation}
\end{enumerate}
\end{thm}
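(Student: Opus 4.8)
\emph{Overall strategy.} The plan is to obtain all three assertions from the abstract theory of nonlinear contraction semigroups generated by an m-accretive operator, specialised to the Banach space $X=L^\infty(\Om)$ and to $A=-\Delta_{p(x)}$ with the domain ${\mathcal D}(A)$ stated above. Once $A$ is shown to be m-accretive in $L^\infty(\Om)$, Chapter~4 of \cite{Ba} (Crandall--Liggett) produces, for every $u_0\in\overline{{\mathcal D}(A)}^{L^\infty}$ and every $h\in L^1(0,T;L^\infty(\Om))$, a unique mild (integral) solution $u\in\mathcal C([0,T];\overline{{\mathcal D}(A)}^{L^\infty})$ of $u'+Au\ni h$, $u(0)=u_0$; the remaining work is to identify this $u$ with the weak solution of Definition~\ref{defi1} given by Theorem~\ref{sol for (S_t)}, to locate $\overline{{\mathcal D}(A)}^{L^\infty}$ inside $\mathcal C_0(\overline\Om)$, and to read off (ii)--(iii) from the corresponding abstract estimates.

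\emph{Step 1: $A$ is m-accretive in $L^\infty(\Om)$.} First I would check that $J_\la=(I+\la A)^{-1}$ is a well-defined contraction on $L^\infty(\Om)$ for every $\la>0$. For accretivity, take $f,g\in L^\infty(\Om)$, let $u=J_\la f$, $\hat u=J_\la g$ solve $w-\la\Delta_{p(x)}w=\cdot$ with zero Dirichlet data, set $M=\|f-g\|_{L^\infty}$, subtract the two weak formulations, and test with $(u-\hat u-M)^+\in\X$ (admissible since $u-\hat u$ vanishes on $\pa\Om$ and $M\ge0$). The monotonicity of the $p(x)$-Laplacian (Appendix~A) makes the principal term nonnegative, leaving $\int_\Om\bigl((u-\hat u-M)^+\bigr)^2\,dx\le0$, hence $u-\hat u\le M$ a.e.; the symmetric test function gives $\|u-\hat u\|_{L^\infty}\le M$. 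For the range condition $R(I+\la A)=L^\infty(\Om)$, given $g\in L^\infty(\Om)$ I would solve $u-\la\Delta_{p(x)}u=g$ by minimising the strictly convex, coercive, weakly lower semicontinuous functional $v\mapsto\int_\Om\frac{\la}{p(x)}|\nabla v|^{p(x)}\,dx+\frac12\int_\Om v^2\,dx-\int_\Om gv\,dx$ on $\X$, then upgrade the minimiser to $L^\infty(\Om)$ via the elliptic $L^\infty$-bounds of Appendix~C, and observe that then $\Delta_{p(x)}u=(u-g)/\la\in L^\infty(\Om)$, i.e. $u\in{\mathcal D}(A)$.

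\emph{Step 2: identification and (i).} Since $\Om$ is bounded, $h\in L^\infty(Q_T)\subset L^2(Q_T)\cap L^q(Q_T)$ and $u_0\in\X\cap L^\infty(\Om)$, so Theorem~\ref{sol for (S_t)} yields the unique weak solution $\tilde u\in\mathcal C([0,T];\X)$ of $(S_T)$. The implicit time semi-discretization $u_k-\Delta_t\Delta_{p(x)}u_k=u_{k-1}+\Delta_t\,h_k$, i.e. $u_k=J_{\Delta_t}(u_{k-1}+\Delta_t\,h_k)$, used to construct $\tilde u$ is exactly the resolvent scheme underlying the Crandall--Liggett approximation of $u$, so both constructions have the same limit and $u=\tilde u$. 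For (i) it remains to show $\overline{{\mathcal D}(A)}^{L^\infty}\subset\mathcal C_0(\overline\Om)$: any $v\in{\mathcal D}(A)$ solves $-\Delta_{p(x)}v=Av\in L^\infty(\Om)$ with zero boundary values, so the interior and boundary H\"older regularity for the $p(x)$-Laplacian with bounded right-hand side (\cite{AMS,Fa0,FS} together with the extensions of Appendix~C) gives $v\in\mathcal C^{0,\al}(\overline\Om)$ with $v=0$ on $\pa\Om$, i.e. $v\in\mathcal C_0(\overline\Om)$; since $\mathcal C_0(\overline\Om)$ is closed in $L^\infty(\Om)$, the $L^\infty$-closure of ${\mathcal D}(A)$ is contained in $\mathcal C_0(\overline\Om)$, and therefore $u\in\mathcal C([0,T];\mathcal C_0(\overline\Om))$.

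\emph{Step 3: (ii), (iii), and the main obstacle.} Statement (ii) is the B\'enilan comparison inequality for integral solutions of an evolution equation governed by an accretive operator (Theorem~4.2, p.~130, of \cite{Ba}): applying it to $u$ and $v$ with data $(u_0,h)$ and $(v_0,k)$ gives $\|u(t)-v(t)\|_{L^\infty}\le\|u(s)-v(s)\|_{L^\infty}+\int_s^t\|h(\tau)-k(\tau)\|_{L^\infty}\,d\tau$ for $0\le s\le t\le T$, and (ii) is the case $s=0$. For (iii), when $u_0\in{\mathcal D}(A)$ and $h\in W^{1,1}(0,T;L^\infty(\Om))$, Theorem~4.4 (p.~141) of \cite{Ba} shows the mild solution is Lipschitz from $[0,T]$ into $L^\infty(\Om)$, with $u(t)\in{\mathcal D}(A)$ and $u'(t)+Au(t)=h(t)$ for a.e.\ $t$; hence $u\in W^{1,\infty}(0,T;L^\infty(\Om))$, $\Delta_{p(x)}u=u'-h\in L^\infty(Q_T)$, and the abstract bound $\|u'(t)\|_{L^\infty}\le\|u'(0^+)\|_{L^\infty}+\int_0^t\|\pa_t h(\tau)\|_{L^\infty}\,d\tau$ with $u'(0^+)=h(0)+\Delta_{p(x)}u_0$ gives, after bounding $\int_0^t$ by $\int_0^T$, precisely \eqref{estimationbiss2}. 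I expect the only genuinely delicate points to be the range condition for $I+\la A$ in $L^\infty$, which rests on the new elliptic $L^\infty$-regularity of Appendix~C, and the clean identification of the abstract mild solution with the weak solution of Definition~\ref{defi1} (verifying that the semi-discretization of Section~3 and the Crandall--Liggett scheme converge to the same object); the accretivity estimate and the invocations of \cite{Ba} are then routine.
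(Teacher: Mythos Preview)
Your overall architecture is sound and is in spirit the same as the paper's: m-accretivity of $A=-\Delta_{p(x)}$ in $L^\infty(\Om)$, then the Crandall--Liggett machinery from \cite{Ba}, then identification with the weak solution built in Section~3. The paper, however, does not invoke Theorems~4.2 and~4.4 of \cite{Ba} as black boxes; it \emph{reproduces} their proofs in the concrete setting, introducing explicitly the two-parameter functions $\varphi(t,s)=\|r(t)-k(s)\|_{L^\infty}$, $b(t,r,k)$ and $\Psi(t,s)$ solving a first-order transport PDE, and comparing the discrete quantity $\Phi^{\epsilon,\eta}_{n,m}=\|u^n_\epsilon-u^m_\eta\|_{L^\infty}$ with its discrete analogue $\Psi^{\epsilon,\eta}_{n,m}$. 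This buys two things at once: the uniform $L^\infty(Q_T)$-Cauchy property of the very same discretization already shown to converge to the weak solution in Section~3 (so the identification step is automatic), and the assertion $u\in C([0,T];C_0(\overline\Om))$ directly from the fact that each piecewise-linear interpolant $\tilde u_{\Delta_t}$ lies in $C([0,T];C_0(\overline\Om))$ and the convergence is uniform in $L^\infty$. Your alternative route---cite the abstract theorems, then identify---is cleaner but requires you to argue separately that the resolvent scheme of Crandall--Liggett coincides with the Euler scheme \eqref{eq1}, which you do correctly.

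There is one genuine gap in your Step~3 for (iii). You write that Theorem~4.4 of \cite{Ba} gives ``$u(t)\in{\mathcal D}(A)$ and $u'(t)+Au(t)=h(t)$ for a.e.\ $t$''. That conclusion (mild $\Rightarrow$ strong) requires the underlying Banach space to have the Radon--Nikod\'ym property, which $L^\infty(\Om)$ does \emph{not} have; absent RNP, a Lipschitz map into $L^\infty$ need not be differentiable a.e. The paper circumvents this by combining the abstract Lipschitz bound with the information, already secured by Theorem~\ref{sol for (S_t)}, that $u$ is a weak solution with $\partial_t u\in L^2(Q_T)$: then the difference quotients $\frac{u(t)-u(s)}{t-s}$ are uniformly bounded in $L^\infty$ and converge weakly in $L^2$ to $\partial_t u$, hence $\partial_t u\in L^\infty(Q_T)$ and $\Delta_{p(x)}u=\partial_t u-h\in L^\infty(Q_T)$. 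You should insert this argument; once you do, your version of (iii) is complete.
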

Concerning problem $(P_T)$, we deduce the following similar result:
\begin{thm}\label{continuity u with f}
Assume that conditions and hypotheses on $f$ in Theorem \ref{PT1} are satisfied. Let $u_0\in \X\cap\overline{{\mathcal D}(A)}^{L^\infty}$. Then, the unique weak solution to $(P_T)$ 
belongs to $C([0,T];C_0(\overline{\Omega}))$ and 
\begin{enumerate}
\item[(i)] there exists $\omega>0$ such that if $v$ is another weak solution to $(P_T)$ with the initial datum $v_0\in \X\cap\overline{{\mathcal D}(A)}^{L^\infty}$ then the following estimate holds for $T<\tilde{T}$:
$$
\|u(t)-v(t)\|_{L^\infty(\Omega)}\leq e^{\omega t} \|u_0-v_0\|_{L^\infty(\Omega)},\quad 0\leq t\leq T.
$$
\item[(ii)]  If $u_0\in {\mathcal D}(A)$ then $u\in W^{1,\infty}(0,T;L^\infty(\Omega))$ and $\Delta_{p(x)} u\in L^\infty(Q_T)$, and the following estimate holds:
$$
\left \|\frac{\partial u}{\partial t}(t)\right \|_{L^\infty(\Omega)}\leq e^{\omega t} \|\Delta_{p(x)} u_0+f(x,u_0)\|_{L^\infty(\Omega)}.
$$
\end{enumerate}
\end{thm}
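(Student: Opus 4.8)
We will \emph{freeze} the nonlinearity along the solution and reduce the whole statement to the already proved Theorem~\ref{continuity de u} for $(S_T)$. Fix $T<\tilde T$ and let $u$ be the weak solution to $(P_T)$ furnished by Theorem~\ref{PT1}. By that theorem together with the $L^\infty$--bounds obtained in its proof (see also Section~5), $u\in L^\infty(Q_T)$; writing $M\eqdef\|u\|_{L^\infty(Q_T)}$ and using $\mathbf{(f_1)}$ with $\mathbf{(H1)}$ (resp. $\mathbf{(H2)}$), the function
\begin{equation*}
h(t,x)\eqdef f(x,u(t,x))
\end{equation*}
belongs to $L^\infty(Q_T)\subset L^2(Q_T)\cap L^q(Q_T)$. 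Comparing Definitions~\ref{defi1} and~\ref{ws2}, $u$ is then the unique weak solution of $(S_T)$ with right-hand side $h$ and initial datum $u_0$. Since $u_0\in\X\cap\overline{{\mathcal D}(A)}^{L^\infty}$, Theorem~\ref{continuity de u}(i) immediately yields $u\in C([0,T];C_0(\overline{\Omega}))$; in particular $t\mapsto u(t)$, hence $t\mapsto h(t)$, is continuous into $L^\infty(\Omega)$.

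\emph{Proof of (i).} Let $v$ be a second weak solution, with datum $v_0$, and set $k(t,x)\eqdef f(x,v(t,x))$. Shrinking $T$ if necessary we may assume $M$ is a common $L^\infty(Q_T)$--bound for $u$ and $v$, and we let $\omega=\omega(M)>0$ be the Lipschitz constant of $t\mapsto f(x,t)$ on $[-M,M]$, which is independent of $x$ by $\mathbf{(f_1)}$. Applying estimate~\eqref{estimation2} of Theorem~\ref{continuity de u}(ii) to $u$ and $v$, regarded as mild solutions of $(S_T)$ with right-hand sides $h$ and $k$,
\begin{equation*}
\|u(t)-v(t)\|_{L^\infty(\Omega)}\le\|u_0-v_0\|_{L^\infty(\Omega)}+\int_0^t\|h(s)-k(s)\|_{L^\infty(\Omega)}\,{\rm d}s\le\|u_0-v_0\|_{L^\infty(\Omega)}+\omega\int_0^t\|u(s)-v(s)\|_{L^\infty(\Omega)}\,{\rm d}s ,
\end{equation*}
and Gronwall's lemma gives $\|u(t)-v(t)\|_{L^\infty(\Omega)}\le e^{\omega t}\|u_0-v_0\|_{L^\infty(\Omega)}$.

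\emph{Proof of (ii).} Assume now $u_0\in{\mathcal D}(A)$, i.e. $\Delta_{p(x)}u_0\in L^\infty(\Omega)$. The obstruction is that applying Theorem~\ref{continuity de u}(iii) to $(S_T)$ with right-hand side $h$ requires $h\in W^{1,1}(0,T;L^\infty(\Omega))$, which a priori presupposes the time-regularity of $u$ we want to prove. I would break this circularity by using that $(P_T)$ is autonomous. First, the constant function $u_0$ is the mild solution of $(S_T)$ with datum $u_0$ and right-hand side $-\Delta_{p(x)}u_0\in L^\infty(\Omega)$, so \eqref{estimation2} gives, for $s>0$ small,
\begin{equation*}
\|u(s)-u_0\|_{L^\infty(\Omega)}\le\int_0^s\|f(\cdot,u(\sigma))+\Delta_{p(x)}u_0\|_{L^\infty(\Omega)}\,{\rm d}\sigma ;
\end{equation*}
since $u(\sigma)\to u_0$ in $L^\infty(\Omega)$ as $\sigma\to0^+$ and $f$ is continuous in its second argument uniformly in $x$ (by $\mathbf{(f_1)}$), this yields
\begin{equation*}
\limsup_{s\to0^+}\frac{\|u(s)-u_0\|_{L^\infty(\Omega)}}{s}\le\|\Delta_{p(x)}u_0+f(x,u_0)\|_{L^\infty(\Omega)} .
\end{equation*}
Next, as $(P_T)$ has no explicit time dependence, for $s>0$ the translate $u(\cdot+s)$ is, by uniqueness, the weak solution of $(P_T)$ on $[0,T-s]$ with initial datum $u(s)\in\X\cap\overline{{\mathcal D}(A)}^{L^\infty}$ (a mild solution stays in $\overline{{\mathcal D}(A)}^{L^\infty}$ at all times), so part~(i) applied to $u(\cdot+s)$ and $u$ gives $\|u(t+s)-u(t)\|_{L^\infty(\Omega)}\le e^{\omega t}\|u(s)-u_0\|_{L^\infty(\Omega)}$. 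Dividing by $s$, letting $s\to0^+$, and invoking the elementary fact that a continuous curve whose upper right Dini derivative is pointwise dominated by a continuous function is Lipschitz with the corresponding integral bound, we conclude that $u\in W^{1,\infty}(0,T;L^\infty(\Omega))$ with
\begin{equation*}
\left\|\frac{\partial u}{\partial t}(t)\right\|_{L^\infty(\Omega)}\le e^{\omega t}\|\Delta_{p(x)}u_0+f(x,u_0)\|_{L^\infty(\Omega)},\qquad 0\le t\le T .
\end{equation*}
Finally, from $\Delta_{p(x)}u(t)=f(\cdot,u(t))-\frac{\partial u}{\partial t}(t)$, valid a.e. on $Q_T$, the right-hand side is bounded in $L^\infty(\Omega)$ uniformly in $t$, whence $\Delta_{p(x)}u\in L^\infty(Q_T)$.

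The only non-routine point is the last paragraph: obtaining the Lipschitz-in-time regularity of $u$ — equivalently the $W^{1,1}$ regularity of the frozen right-hand side $h=f(\cdot,u)$ — without assuming it; the remainder is a direct transcription of Theorem~\ref{continuity de u} with $h=f(\cdot,u)$ plus Gronwall's lemma. Alternatively, one may bypass this step by appealing to the general theory of Lipschitz perturbations of $m$-accretive operators (the results following Theorem~4.4 in \cite{Ba}): since $A=-\Delta_{p(x)}$ is $m$-accretive in $L^\infty(\Omega)$ and $u\mapsto f(\cdot,u)$ is, on the relevant ball, globally Lipschitz by $\mathbf{(f_1)}$, the perturbed Cauchy problem $u_t+Au=f(\cdot,u)$ generates an $\omega$-quasicontractive semiflow on $\X\cap\overline{{\mathcal D}(A)}^{L^\infty}$, from which (i) and (ii) follow at once.
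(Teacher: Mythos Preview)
Your proof is correct and follows essentially the same strategy as the paper: freeze the nonlinearity $h=f(\cdot,u)$ and invoke Theorem~\ref{continuity de u} to get $u\in C([0,T];C_0(\overline{\Omega}))$; derive (i) from \eqref{estimation2} together with the Lipschitz bound on $f$ and Gronwall; and for (ii) exploit the autonomy of $(P_T)$ to compare $u$ with its time-shift via (i), combined with the estimate on $\|u(s)-u_0\|_{L^\infty}$ obtained by comparing $u$ to the constant solution $u_0$ through \eqref{estimation2}. The only cosmetic difference is that the paper bounds $\|u_0-u(t-s)\|_{L^\infty}$ by $(t-s)e^{\omega(t-s)}\|\Delta_{p(x)}u_0+f(x,u_0)\|_{L^\infty}$ directly via Gronwall and then obtains the global Lipschitz estimate $\|u(t)-u(s)\|_{L^\infty}\le (t-s)e^{\omega t}\|\Delta_{p(x)}u_0+f(x,u_0)\|_{L^\infty}$ in one stroke, whereas you pass through the upper right Dini derivative and then integrate; the underlying computation is the same.
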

\begin{rem}\label{omega-acretivity}\ 
\begin{enumerate}
\item The constant $\omega$ in Theorem \ref{continuity u with f} is  the Lipschitz constant of $f$ in\break $[ v_1(T),v_2(T)]$ (respectively $[-v_0(T),v_0(T)]$) given in \eqref{CL}. If $f$ is nonincreasing with respect to the second variable and $x\to f(x,0)\in L^\infty(\Omega)$, $\omega=0$ can be taken in assertions (i) and (ii) above. In this case, note that $-\Delta_{p(x)}-f(x,\cdot)$ is m-accretive in $L^\infty(\Omega)$ (see proposition \ref{maccretive}).
\item If we assume hypotheses in Theorem \ref{PT3}, then the weak solution to $(P_T)$ belongs to $C([0,+\infty), C_0(\overline{\Omega}))$.
\end{enumerate}
\end{rem}
\noindent Using the above results, we give some  stabilization properties for $(P_T)$ for global solutions. Precisely, we prove the following:
\begin{thm}\label{stabilization}
Assume that $f$ satisfies $\bf{(f_1)}$, $\bf{(f_4)}$ and is nonincresing in respect to the second variable. Then, for any initial data $u_0\in C^1_0(\overline{\Omega})$, the weak solution, $u$, to $(P_T)$ is defined in $(0,\infty)\times \Omega$, belongs to $C([0,+\infty); C_0(\overline{\Omega}))$ and verifies
$$
u(t)\to u_\infty\quad\mbox{ in } L^\infty(\Omega)\quad\mbox{as }t\to\infty
$$
where $u_\infty$ is the unique stationary solution to $(P_T)$.
\end{thm}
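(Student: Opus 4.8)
The plan is to combine three ingredients: well-posedness of the stationary problem
\begin{equation*}
-\Delta_{p(x)}u = f(x,u)\ \text{ in }\Om,\qquad u=0\ \text{ on }\pa\Om, \tag{$P_\infty$}
\end{equation*}
the $m$-accretivity of $-\Delta_{p(x)}-f(x,\cdot)$ in $L^\infty(\Om)$ (which furnishes a global, order-preserving, $L^\infty$-contractive solution semigroup $S(t)$, with $u(t)=S(t)u_0$), and a monotone sub/supersolution sweeping that reduces the asymptotics of $u(t)$ to the identification of its limit with the unique equilibrium. First I would treat $(P_\infty)$. Since $f(x,\cdot)$ is nonincreasing, $F(x,s):=\int_0^s f(x,\tau)\,d\tau$ is concave in $s$, so $J(v):=\int_\Om\frac{|\nabla v|^{p(x)}}{p(x)}\,dx-\int_\Om F(x,v)\,dx$ is convex on $\X$; concavity together with $\mathbf{(f_4)}$ yields the affine bound $F(x,s)\le F(x,\zeta)+f(x,\zeta)(s-\zeta)$, dominated by the $p(x)$-Dirichlet term because $p_->1$, so $J$ is coercive and has a minimizer $u_\infty\in\X$, a weak solution of $(P_\infty)$; by Appendix C $u_\infty\in L^\infty(\Om)$, hence $u_\infty\in C_0(\overline{\Om})$ by the H\"older regularity theory for the $p(x)$-Laplacian. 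Uniqueness follows from the strict monotonicity of $v\mapsto-\Delta_{p(x)}v-f(x,v)$ (strict in the principal part, monotone in $-f$ since $f$ is nonincreasing), testing the difference of two solutions against itself.

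For global existence, note that $\mathbf{(f_1)}$ and $\mathbf{(f_4)}$ force $x\mapsto f(x,s)\in L^\infty(\Om)$ for every fixed $s\in\R$ (Lipschitz control started from $\zeta$); hence, by Remark \ref{omega-acretivity} and Proposition \ref{maccretive}, $A_f:=-\Delta_{p(x)}-f(x,\cdot)$ is $m$-accretive in $L^\infty(\Om)$ with $\omega=0$, and since $u_0\in C^1_0(\overline{\Om})\subset\X\cap\overline{{\mathcal D}(A)}^{L^\infty}$ (cf.\ condition {\bf (C2)} of Theorem \ref{PT3}) this gives a unique mild solution $u(t)=S(t)u_0$ for all $t\ge0$, which by Section 5 is the weak solution of Definition \ref{ws2} and lies in $C([0,+\infty);C_0(\overline{\Om}))$, with $S(t)$ order preserving and an $L^\infty$-contraction. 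As barriers I would use the torsion-type functions $e_C\in\X\cap C_0(\overline{\Om})$ solving $-\Delta_{p(x)}e_C=C$ with zero boundary data: then $e_C\ge0$ by the maximum principle, and with $C_0:=\|f(\cdot,0)\|_{L^\infty(\Om)}$, for every $C\ge C_0$ one has $-\Delta_{p(x)}e_C-f(x,e_C)=C-f(x,e_C)\ge C-C_0\ge0$ ($f$ nonincreasing, $e_C\ge0$), so $e_C$ is a supersolution of $(P_\infty)$ and, by oddness of $\Delta_{p(x)}$, $-e_C$ is a subsolution. Choosing $C$ large enough that $-e_C\le u_0\le e_C$ on $\overline{\Om}$ (here the hypothesis $u_0\in C^1_0(\overline{\Om})$ enters, via a boundary estimate making $\|e_C\|_{L^\infty}$ and the normal derivative of $e_C$ large), the weak comparison principle for $(P_\infty)$ also gives $-e_C\le u_\infty\le e_C$.

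Now the sweeping. The time-independent $e_C$, $-e_C$ are a supersolution and a subsolution of $(P_T)$; setting $\overline u(t):=S(t)e_C$, $\underline u(t):=S(t)(-e_C)$, the parabolic comparison principle (test the difference with its positive part: the $-\Delta_{p(x)}$ term is monotone, the $-f$ term has the good sign since $f$ is nonincreasing) gives $\overline u(h)\le e_C=\overline u(0)$ for $h>0$, whence $\overline u(t+h)=S(t)\overline u(h)\le S(t)\overline u(0)=\overline u(t)$ by order preservation, so $t\mapsto\overline u(t,x)$ is nonincreasing; symmetrically $t\mapsto\underline u(t,x)$ is nondecreasing, and $\underline u(t)\le u(t)\le\overline u(t)$ for all $t$ since the data are ordered. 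Thus $\overline u(t)\searrow v^+$, $\underline u(t)\nearrow v^-$ pointwise with $-e_C\le v^-\le v^+\le e_C$, hence also in $L^2(\Om)$ by dominated convergence. Since $A_f$ is $m$-accretive in $L^2(\Om)$ too (so $S(t)$ acts continuously on $L^2(\Om)$, consistently with its $L^\infty$-action), for any $s>0$ and any $t_n\to+\infty$ we get $S(s)v^+=\lim_n S(s)\overline u(t_n)=\lim_n\overline u(t_n+s)=v^+$; thus $v^+$ is a fixed point of $S(t)$, i.e.\ a weak solution of $(P_\infty)$, so $v^+=u_\infty$ by uniqueness, and likewise $v^-=u_\infty$. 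Consequently $u(t)\to u_\infty$ pointwise, and since $\overline u(t),\underline u(t),u_\infty$ all lie in $C_0(\overline{\Om})$ with $\overline u(t)\searrow u_\infty$, $\underline u(t)\nearrow u_\infty$ monotonically on the compact $\overline{\Om}$, Dini's theorem yields uniform convergence; then $\|u(t)-u_\infty\|_{L^\infty(\Om)}\le\|\overline u(t)-u_\infty\|_{L^\infty(\Om)}+\|\underline u(t)-u_\infty\|_{L^\infty(\Om)}\to0$.

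The main obstacle I expect is making the comparison apparatus rigorous in the $p(x)$-framework: the weak comparison principle for $(P_T)$ against time-independent barriers that are only weak (sub/super)solutions, the order preservation of $S(t)$, and the equivalence ``fixed point of $S(t)$ $\Longleftrightarrow$ weak solution of $(P_\infty)$'' — all resting on admissibility of truncated differences as test functions and on the monotonicity of $-\Delta_{p(x)}$ (Appendix A) and of $-f(x,\cdot)$ — together with the compatibility of the $L^\infty$- and $L^2$-semigroups used to identify $v^\pm$. An alternative for this last identification is the gradient-flow structure ($J$ convex), but the monotone-sweeping route is more self-contained and delivers the $L^\infty$ convergence directly via Dini once $u_\infty\in C_0(\overline{\Om})$ is known.
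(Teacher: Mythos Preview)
Your proposal is correct and follows essentially the same strategy as the paper: m-accretivity of $A_f=-\Delta_{p(x)}-f(x,\cdot)$ in $L^\infty(\Omega)$ (Proposition~\ref{maccretive}) to get a global order-preserving contractive semigroup, torsion-type barriers $e_C$ (the paper's $w_\mu$ from Lemma~\ref{31YY}) chosen large enough that $-e_C\le u_0\le e_C$ thanks to $u_0\in C^1_0(\overline\Omega)$, and monotone trajectories $S(t)(\pm e_C)$ trapping $u(t)$ and converging to the unique equilibrium. Your argument is in fact more explicit than the paper's on the final step: the paper simply asserts that ``from the uniqueness of the mild solution, $u_1$ and $u_2$ converge in $L^\infty(\Omega)$ to the stationary solution'', whereas you spell out the identification of the monotone limits as fixed points of $S(\cdot)$ (hence stationary solutions, hence $u_\infty$ by uniqueness) and then upgrade pointwise to uniform convergence via Dini on the compact $\overline\Omega$, using that $\overline u(t),\underline u(t),u_\infty\in C_0(\overline\Omega)$.
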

\begin{rem}
\cite{Fa00} establish uniqueness results for quasilinear elliptic equations involving the $p(x)$-laplacian under different conditions on $f$ (see Theorem 1.2 for instance). Theorem \ref{stabilization} is still valid in this case.
\end{rem}


\section{Existence of solutions of $(S_T)$}
\noindent First, we consider the following quasilinear elliptic problem:
\begin{equation*}\label{(P)}
\Biggl\{\begin{array}{ll}
u-\la\Delta _{p(x)}u = g&\text{ in } ~\Om,  \\
u  = 0 ~&\text{  on   }~
 \pa\Om
\end{array}\tag{P}  \end{equation*}
with $\la>0$ and $g$ a measurable function. Concerning $(P)$, we have the following  result.
\begin{lem}\label{sol of (P)}
Let $g\in L^q(\Omega)$, $q>\frac{d}{p_-}$. Then for any $\lambda>0$, $(P)$ admits a unique weak solution $u\in \X$ satisfying
$$\int_\Om u\varphi\,dx+\la\int_\Om |\nabla u|^{p(x)-2}\nabla u.\nabla \varphi\,dx=\int_\Om g\varphi\,dx, \quad \forall
\varphi \in \X.$$
Furthermore, $u\in L^\infty(\Omega)$.
\end{lem}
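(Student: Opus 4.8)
\noindent\emph{Existence and uniqueness.} Since $\frac{2d}{d+2}<p_-\le p_+<d$, Theorem~\ref{SE1} gives the continuous embedding $\X\hookrightarrow L^{p_-^*}(\Om)$ with $p_-^*=\frac{dp_-}{d-p_-}>2$; in particular $\X\hookrightarrow L^2(\Om)$, and since $q>\frac{d}{p_-}\ge(p_-^*)'$ (the last inequality holding because $p_-<d$) we get $L^q(\Om)\hookrightarrow\X'$, so $\varphi\mapsto\int_\Om g\varphi\,dx$ is a bounded linear form on $\X$. The plan is to obtain the weak solution variationally, as the unique minimiser on the reflexive space $\X$ of
$$
J(u)\eqdef\frac12\int_\Om u^2\,dx+\la\int_\Om\frac{|\nabla u|^{p(x)}}{p(x)}\,dx-\int_\Om g\,u\,dx .
$$
The functional $J$ is well defined, strictly convex (from strict convexity of $t\mapsto|t|^{p(x)}$ and of $u\mapsto\frac12\|u\|_{L^2(\Om)}^2$), coercive by the modular inequalities \eqref{a}--\eqref{b}, which give $\int_\Om\frac{|\nabla u|^{p(x)}}{p(x)}\,dx\ge\frac1{p_+}\min(\|u\|_\X^{p_-},\|u\|_\X^{p_+})$ and hence dominate $|\int_\Om gu\,dx|\le C\|g\|_{\X'}\|u\|_\X$ as $\|u\|_\X\to\infty$ since $p_->1$, and weakly lower semicontinuous (the first two terms being convex and strongly continuous on $\X$, the third a bounded linear form). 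Thus $J$ attains its infimum at a unique $u\in\X$, and its Euler--Lagrange equation — obtained by differentiating under the integral sign, legitimate because $p\in L^\infty(\Om)$ — is exactly the stated weak formulation for all $\varphi\in\X$. Uniqueness alternatively follows from the strict monotonicity of $v\mapsto v-\la\De_{p(x)}v$ established in Appendix~A.

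\noindent\emph{The $L^\infty$ bound.} This is the crux; the argument is a Stampacchia truncation. For $k>0$ set $A_k=\{x\in\Om:\ u(x)>k\}$ and take $\varphi=(u-k)_+\in\X$ in the weak formulation. Using $u(u-k)=(u-k)^2+k(u-k)$ on $A_k$ and discarding the nonnegative terms $\int_{A_k}(u-k)^2\,dx$ and $k\int_{A_k}(u-k)\,dx$ yields
$$
\la\int_\Om|\nabla(u-k)_+|^{p(x)}\,dx\le\int_{A_k}|g|\,(u-k)\,dx .
$$
Estimating the left-hand side from below via \eqref{b} together with $\X\hookrightarrow W^{1,p_-}_0(\Om)\hookrightarrow L^{p_-^*}(\Om)$, and the right-hand side from above by H\"older with exponents $q$, $p_-^*$ and $(1-\frac1q-\frac1{p_-^*})^{-1}$, one arrives at $\|(u-k)_+\|_{L^{p_-^*}(\Om)}\le C\|g\|_{L^q(\Om)}\,|A_k|^{\gamma}$, the hypothesis $q>\frac{d}{p_-}$ being precisely what forces the effective iteration exponent to exceed $1$. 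Combined with $(h-k)\,|A_h|^{1/p_-^*}\le\|(u-k)_+\|_{L^{p_-^*}(\Om)}$ for $h>k$, this gives $|A_h|\le C'(h-k)^{-s}|A_k|^{1+\delta}$ with $\delta>0$, whence by the classical Stampacchia iteration lemma $|A_{k_0}|=0$ for some $k_0\le C''\|g\|_{L^q(\Om)}$. Running the same argument for $-u$ (which solves $(P)$ with datum $-g$) gives the lower bound, so $\|u\|_{L^\infty(\Om)}\le C(\la,q,d,p_-,|\Om|)\,\|g\|_{L^q(\Om)}$. This is in fact a particular case of the regularity estimate established in Appendix~C, which may be quoted directly.

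\noindent The main difficulty lies in this $L^\infty$ bound: one must push the De Giorgi--Stampacchia iteration through while carrying the variable exponent, which is handled by reducing everything to the constant exponent $p_-$ through $L^{p(x)}\hookrightarrow L^{p_-}$ and the modular estimates \eqref{a}--\eqref{b} (at the price of distinguishing the cases $\|(u-k)_+\|_\X\lessgtr1$), and by checking that $q>\frac{d}{p_-}$ indeed produces an exponent $1+\delta>1$ in the iteration. The existence--uniqueness part is routine once the embeddings above are in place; alternatively one could bypass the variational formulation and invoke Minty--Browder surjectivity for the strictly monotone, coercive, hemicontinuous operator $\X\ni v\mapsto v-\la\De_{p(x)}v\in\X'$.
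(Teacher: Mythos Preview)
Your existence--uniqueness argument is the same variational minimisation as the paper's: define $J_\la$, check coercivity/strict convexity/weak lower semicontinuity, take the minimiser. Nothing to add there.

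For the $L^\infty$ bound the paper simply invokes Corollary~\ref{reg2} in Appendix~\ref{ApC}, whose proof is a \emph{localised} De Giorgi--Fusco--Sbordone iteration (Lemma~\ref{fsb1} and Proposition~\ref{rg1}). Your global Stampacchia iteration is a legitimate, more direct alternative for this particular equation, since the zeroth-order term $u$ comes with the good sign. However, there is a real gap in the way you pass from the modular to the norm. Using \eqref{b} on $(u-k)_+$ gives
\[
\la\,\|(u-k)_+\|_\X^{p_+}\le \la\!\int_\Om|\nabla(u-k)_+|^{p(x)}dx\le \|g\|_{L^q}\,\|(u-k)_+\|_{L^{p_-^*}}\,|A_k|^{\alpha},\qquad \alpha=1-\tfrac1q-\tfrac1{p_-^*},
\]
and after Sobolev the Stampacchia exponent is $\alpha\,p_-^*/(p_+-1)$, not $\alpha\,p_-^*/(p_--1)$. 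A short computation shows $\alpha\,p_-^*/(p_+-1)>1$ iff $q>p_-^*/(p_-^*-p_+)$, which is \emph{strictly stronger} than $q>d/p_-$ whenever $p_+>p_-$. So the hypothesis $q>d/p_-$ is \emph{not} ``precisely what forces the iteration exponent to exceed $1$'' along the route you describe.

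The fix is to bypass \eqref{a}--\eqref{b} altogether and reduce to $p_-$ pointwise: from $t^{p_-}\le t^{p(x)}+1$ for $t\ge0$ one gets
\[
\int_\Om|\nabla(u-k)_+|^{p_-}dx\le \int_\Om|\nabla(u-k)_+|^{p(x)}dx+|A_k|,
\]
whence, with $X=\|(u-k)_+\|_{L^{p_-^*}}$, the constant--exponent Sobolev inequality yields $X^{p_-}\le C_1X|A_k|^\alpha+C_2|A_k|$. Young's inequality then gives $|A_h|\le C(h-k)^{-p_-^*}\bigl(|A_k|^{\alpha p_-^*/(p_--1)}+|A_k|^{p_-^*/p_-}\bigr)$; the second exponent is $d/(d-p_-)>1$ automatically, and the first exceeds $1$ exactly when $q>d/p_-$. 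This is precisely how the paper handles it at the end of the proof of Proposition~\ref{rg1} (note the ``$+|A_{k,s}|$'' there). With this correction your direct argument is complete and, for this lemma, more economical than the full Appendix~\ref{ApC} machinery.
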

\begin{rem} Lemma \ref{sol of (P)} still holds under the assumptions $p_->d$ and $q>1$. 
\end{rem}
\begin{proof}
Consider the energy functional $J_\la$ associated to $(P)$ given by
\[J_\la(u)=\frac{1}{2}\int_\Om u^2\,dx+\la\int_\Om\frac{|\nabla u|^{p(x)}}{p(x)}\,dx-\int_\Om gu\,dx.\]
Note that $J_\la$ is well-defined and G\^ateaux differentiable on $\X$. Indeed, $q>\frac{d}{p_-}$ and $1<p_-\leq p_+<d$ imply that $L^q\subset (L^{p^*(x)})'$.
By Theorem \ref{SE1} and \eqref{a}, for $\|u\|_{\X}\geq 1$:
\begin{align*}
 J_\la(u)&=\frac{1}{2}\int_\Om u^2\,dx+\la\int_\Om\frac{|\nabla u|^{p(x)}}{p(x)}\,dx-\int_\Om gu\,dx
\geq\frac{\lambda}{p_+}\|u\|_{\X}^{p_{-}}-C\|u\|_{\X}.
\end{align*}
Thus $J_\la$ is coercive. Furthermore $J_\lambda$ is continuous and strictly convex
on $\X$ and therefore admits a global minimizer $u\in \X$ which is a weak solution to $\eqref{(P)}$. In addition, applying Corollary \ref{reg2} in Appendix \ref{ApC}, $u\in L^\infty(\Omega)$.
\end{proof}

\noindent\textbf{Proof of Theorem  \ref{sol for (S_t)}:} Let $N\in\mathbb N^*$, $T>0$ and set $\Delta_t=\frac{T}{N}$.
For $0\leq n\leq N$, we define $t_n=n\Delta_t$. We perform the proof along five steps.\\
\textbf{Step 1.} Approximation of $h$.\\
For $n\in\{1,\cdots,N\}$, we define for $t\in[t_{n-1}, t_n)$ and $x \in \Omega$
$$h_{\Delta_t}(t,x)=h^n(x)\eqdef \frac{1}{\Delta_t}\int_{t_{n-1}}^{t_n} h(s,x)ds.$$
Then by Jensen's Inequality:
\begin{equation*}
\begin{split}
\|h_{\Delta_{t}}\|_{L^q(Q_T)}^q&=\Delta_t\DD\sum_{n=1}^N\|h^n\|^q_{L^{q}}=\Delta_t\DD\sum_{n=1}^N\|\frac{1}{\Delta_t}\int_{t_{n-1}}^{t_n}h(s,x)ds\|^q_{L^q}\\
&\leq\DD\sum_{n=1}^N\int_{t_{n-1}}^{t_n}\|h(s,.)\|_{L^q}^qds \leq \|h\|_{L^q(Q_T)}^q.
\end{split}
\end{equation*}
Thus $h_{\Delta_t}\in L^q(Q_T)$ and $h^n\in L^q(\Omega)$. Also note that $h_{\TT}\ra h$ in  $L^q(Q_T)$.
Indeed let $\varepsilon>0$, there exists $h^\var\in C^1_0(Q_T)$ such that $$\|h-h^\var\|_{L^q(Q_T)}<\frac{\var}{3}.$$
Hence
$$\|h_{\TT}-(h^\var)_{\TT}\|_{L^q(Q_T)}\leq\|h-h^\var\|_{L^q(Q_T)}<\frac{\var}{3}.$$
Since $\|h^\varepsilon-(h^\varepsilon)_{\TT}\|_{L^q(Q_T)}\ra 0$ as $\TT\ra0$, we have for $\Delta_t$ small enough
\[ \|h_{\TT}-h\|_{L^q(Q_T)}\leq
\|h_{\TT}-h^\varepsilon_{\TT}\|_{L^q(Q_T)}+\|h^\varepsilon-h^\varepsilon_{\TT}\|_{L^q(Q_T)}+
\|h-h^\varepsilon\|_{L^q(Q_T)}<\var.\]
Hence $h_{\Delta_t}\ra h$ in $L^q(Q_T)$.\\
\textbf{Step 2.} Time-discretization of $(S_T)$.\\
We define the following iterative scheme $u^0=u_0$ and for $n\geq 1$,
\begin{equation}\label{eq1}
\mbox{$u^n$ is solution of}\
\left\{\begin{array}{ll}
\DD\frac{u^{n}-u^{n-1}}{\Delta_{t}}-\Delta _{p(x)}u^n = h^n~&\text{ in }\Om,  \\
u^n = 0 ~&\text{  on   }~\pa\Om.
\end{array} \right.
\end{equation}
Note that the sequence $(u^n)_{n\in\{1,\cdots,N\}}$ is well-defined.
Indeed, existence and uniqueness of $u^1\in\X\cap L^\infty(\Om)$ follows from Lemma \ref{sol of (P)} with $g=\Delta_t h^1+u^0\in
L^q(\Omega)$. Hence by induction we
obtain in the same way the existence of $(u^n)$, for any $n=2,\cdots,N$.\\
Defining the functions, for $n=1,\cdots,N$ and $t\in[t_{n-1},t_n)$:
\begin{equation}\label{uu}
u_{\Delta_t}(t)=u^n \quad \mbox{and}\quad  \tilde u_{\Delta_t}(t)=\DD\frac{(t-t_{n-1})}{\Delta_t}(u^n-u^{n-1})+u^{n-1},
\end{equation}
we get
\begin{equation}\label{eq2}
 \frac{\pa \tilde u_{\TT}}{\pa t} -\Delta_{p(x)}u_{\TT}=h_{\TT} \quad\mbox{in } Q_T.
\end{equation}
\textbf{Step 3.} A priori estimates for $u_{\TT}$ and $\tilde u_{\TT}$.\\
Multiplying the equation in \eqref{eq1} by $(u^n-u^{n-1})$ and summing from $n=1$ to $N'\leq N$, we get
\begin{equation}\label{eq3}
\begin{split}
&\sum_{n=1}^{N'}\int_\Om|\nabla u^n|^{p(x)-2}\nabla u^n.\nabla(u^n-u^{n-1})dx  \\
&+\sum_{n=1}^{N'}\TT\int_\Om\left(\frac{u^n-u^{n-1}}{\TT}\right)^2dx=\DD\sum_{n=1}^{N'}\int_\Om h^n(u^n-u^{n-1})dx,
\end{split}
\end{equation}
hence by Young inequality and using the convexity of $u\ra\displaystyle\int_\Om\frac{|\nabla u|^{p(x)}}{p(x)}dx$ we obtain:
\begin{equation*}
\begin{split}
\sum_{n=1}^{N'}\int_\Om \frac{1}{p(x)}&\left(|\nabla u^n|^{p(x)}-|\nabla u^{n-1}|^{p(x)}\right)dx \\
&+\frac{1}{2}\sum_{n=1}^{N'}\TT\int_\Om\left(\frac{u^n-u^{n-1}}{\TT}\right)^2dx
\leq \frac{1}{2}\|h\|^2_{L^2(Q_T)}.
\end{split}
\end{equation*}
Thus we obtain
\begin{equation}\label{b1}
\DD\left(\frac{\pa\tilde u_{\Delta t}}{\pa t}\right)_{\Delta t} \mbox{ is bounded in } L^2(Q_T)  \mbox{ uniformly in  }
\TT,
\end{equation}
\begin{equation}\label{b2}
\mbox{$(u_{\TT}) \mbox{ and }(\tilde u_{\TT})$ are bounded in $L^\infty(0,T;\X)$ uniformly in
$\TT$.}
\end{equation}
Furthermore, using \eqref{b1} we have
\begin{equation}\label{eq7}
\sup_{[0,T]}\|u_{\TT}-\tilde u_{\TT}\|_{L^2(\Om)}
\leq \DD\max_{n=1,...,N}\|u^n-u^{n-1}\|_{L^2(\Om)}
\leq C\TT^{1/2}.
\end{equation}
Therefore for $\TT\ra 0^+$, there exist $u,v\in L^\infty(0,T,\X)$ such that (up to a subsequence)
\begin{equation}\label{fe}
\tilde u_{\TT}\stackrel{*}{\rightharpoonup}  u \text{ in }L^\infty(0,T,\X),\quad  u_{\TT}\stackrel{*}{\rightharpoonup} v
\text{ in }L^\infty(0,T,\X),
\end{equation}
\begin{equation}\label{convL2}
\frac{\pa\tilde u_{\TT}}{\pa t}\rightharpoonup \frac{\pa u}{\pa t}\text { in }L^2(Q_T).
\end{equation}
It follows from \eqref{eq7} that $u\equiv v$. By \eqref{fe}, for any $r\geq1$
\begin{equation}\label{convergeLp}
\tilde u_{\TT},\ u_{\TT} \rightharpoonup  u \text{ in }L^r(0,T;\X).
\end{equation}
\textbf{Step 4.} $u$ satisfies $(S_T)$.\\
\noindent Plugging \eqref{b1}, \eqref{b2} and since the embedding $\WW\hookrightarrow L^2(\Om)$ is compact, the Aubin-Simon's compactness result
(see
\cite{AS}) implies that (up to a subsequence),
\begin{equation}\label{new 1}
\tilde u_{\TT}\ra u\in C([0,T];L^2(\Om)). 
\end{equation}
Now multiplying (\ref{eq2}) by $(u_{\TT}-u)$ we get
\begin{equation*}\label{eq8}
\ii\frac{\pa\tilde u_{\TT}}{\pa t}(u_{\TT}-u)dxdt-\int_0^T\langle\Delta_{p(x)}u_{\TT},u_{\TT}-u\rangle dt
 =\int^T_0\int_\Om h_{\TT}(u_{\TT}-u)dxdt.
\end{equation*}
Rearranging the terms in the last equations and using \eqref{eq7}-\eqref{convergeLp} we have
\begin{equation*}
\begin{split}
\ii&\left(\frac{\pa\tilde u_{\TT}}{\pa t}-\frac{\pa u}{\pa t}\right)(\tilde u_{\TT}-u)dxdt
-\int_0^T\langle\Delta_{p(x)}u_{\TT}-\Delta_{p(x)}u,u_{\TT}-u\rangle dt\\
&=o_{\TT}(1)
\end{split}
\end{equation*}
where $o_{\TT}(1)\to 0$ as $\TT\to 0^+$. Thus we get
\begin{equation*}\label{eq10}
 \frac{1}{2}\int_\Om|\tilde u_{\TT}(T)-u(T)|^2
 dx-\int_0^T\langle\Delta_{p(x)}u_{\TT}-\Delta_{p(x)}u,u_{\TT}-u\rangle=o_{\TT}(1).
\end{equation*}
Using \eqref{new 1}, we obtain
\[\int_0^T\langle\Delta_{p(x)}u_{\TT}-\Delta_{p(x)}u,u_{\TT}-u\rangle dt=o_{\TT}(1)\]
and by Lemma \ref{A4} we conclude that
\begin{equation}\label{conve}
\DD\int_0^T\!\!\int_\Om|\nabla (u_{\TT}-u)|^{p(x)}dxdt\rightarrow 0.
\end{equation}
This implies $\nabla u_{\TT}$ converges to $\nabla u$ in $L^{p(x)}(Q_T)$ and $u_{\TT}$ converges to $u$ in $\X$.
Furthermore
\begin{equation}\label{cvc}
|\nabla u_{\TT}|^{p(x)-2}\nabla u_{\TT}\rightarrow |\nabla u|^{p(x)-2}\nabla u \mbox{ in } (L^{p_c(x)}(Q_T))^d.
\end{equation}
Indeed, we write
\begin{equation}\label{srhs}
\begin{split}
\int_{Q_T} ||\nabla u_{\TT}|^{p(x)-2}\nabla u_{\TT}&-|\nabla u|^{p(x)-2}\nabla
u|^{\frac{p(x)}{p(x)-1}}dxdt\\
&=\int_0^T\!\!\!\int_{\Omega\backslash \Omega_2}(...) dxdt + \int_0^T\!\!\!\int_{\Omega_2}(...) dxdt
\end{split}
\end{equation}
where $\Omega_2=\{x\in\Omega\,|\, p(x)>2\}$. We apply inequality \eqref{A0}. Then the first term in the right-hand side converges to zero as $\Delta_t\ra0^+$. For the second term, we apply H\"older inequality \eqref{hi}: 
\begin{equation*}
\begin{split}
\int_0^T\!\!\!\int_{\Omega_2}(...) dxdt &\leq \int_0^T\!\!\!\int_{\Omega_2} |\nabla(u_{\TT}-u)|^{\frac{p(x)}{p(x)-1}}(|\nabla
u_{\TT}|+|\nabla u|)^{\frac{p(x)(p(x)-2)}{p(x)-1}}dxdt \\
&\leq cXY
\end{split}
\end{equation*}
where 
$$X=\|\,|\nabla(u_{\TT}-u)|^{\frac{p(x)}{p(x)-1}}\|_{L^{p(x)-1}(Q_{T,2})},$$
$$Y= |\,(|\nabla
u_{\TT}|+|\nabla u|)^{\frac{p(x)(p(x)-2)}{p(x)-1}}\|_{L^{\frac{p(x)-1}{p(x)-2}}(Q_{T,2})},$$
$Q_{T,2}= (0,T)\times \Omega_2$ and $c\geq 1$ is a constant independent of $\TT$. We define $r=p_{|\Omega_2}$ the restriction of $p$ on $\Omega_2$. Then $r_-=2$ and $r_+=p_+$. With the new notations and applying Lemma \ref{A6}, we have
\begin{equation}\label{equ1}
X\leq \|\nabla(u_{\TT}-u)\|_{L^{p(x)}(Q_{T})}^{\frac{2}{p_+-1}}+ \|\nabla(u_{\TT}-u)\|_{L^{p(x)}(Q_{T})}^{p_+}
\end{equation} 
and
\begin{equation}\label{equ2}
\begin{split}
Y &\leq 1+\|\,|\nabla u_{\TT}|+|\nabla u|\,\|_{L^{r(x)}(Q_{T,2})}^{p_+(p_+-2)}\\
&\leq 1+\|\,|\nabla u_{\TT}|+|\nabla u|\,\|_{L^{p(x)}(Q_{T})}^{p_+(p_+-2)}\\
&\leq 1 + c(p)(\|\,\nabla u_{\TT}\|_{L^{p(x)}(Q_{T})}^{p_+(p_+-2)}+\| \nabla u\|_{L^{(x)}(Q_{T})}^{p_+(p_+-2)})\\
&\leq C(p)
\end{split}
\end{equation}
since $(u_{\TT})$ is bounded in $L^\infty(0,T;\X)$ uniformly in $\TT$.\\
Plugging \eqref{conve}, \eqref{equ1} and \eqref{equ2}, we deduce that the second term in the right-hand side of \eqref{srhs} converges to $0$ as $\TT\ra 0^+$. Hence we have \eqref{cvc}.\\
Finally, gathering Step 1., \eqref{convL2} and \eqref{cvc}, we conclude passing to the limit, in the distribution sense, in equation \eqref{eq2} that $u$ is a weak solution of $(S_T)$. Furthermore $u$ is the unique weak solution of $(S_T)$. Indeed assume that there exists $v$ a weak solution of $(S_T)$.
Thus,
$$\ii \frac{\partial(u-v)}{\partial t}(u-v)\,dxdt-\int_0^T
\langle\Delta_{p(x)}u-\Delta_{p(x)}v,u-v\rangle\,dt=0.$$
Since $u(0)=v(0)$, the above equality implies that $u\equiv v$.\\
\textbf{Step 5.} $u$ belongs to $C([0,T];\X)$.\\
Since $u\in C([0,T]; L^2(\Om))\cap L^\infty([0,T];\X)$ and $p\in \mathcal P^{\log}(\Omega)$, $u:t\in[0,T]\ra\X$ is weakly continuous.\\ 
Fix $t_0\in [0,T]$. Since $\rho_p$ is weakly lower semicontinuous (see Theorem 3.2.9 in \cite{DHHR}) we have
\[\int_\Om\frac{|\nabla u(t_0)|^{p(x)}}{p(x)}dx\leq\liminf_{t\ra t_0}\int_\Om\frac{|\nabla u(t)|^{p(x)}}{p(x)}dx.\]
From \eqref{eq3} with $\sum_{n=N''}^{N'}$ for $1\leq N''\leq N'$ and since $|\nabla u_{\TT}|$ converges to $|\nabla u|$ in
$L^{p(x)}$, it follows that $u$ satisfies for any
$t\in [t_0,T]$:
\begin{equation}\label{eq10b}
\begin{split}
\int_{t_0}^t\int_\Om \left(\frac{\partial u}{\partial t}\right)^2\,dxds+\int_\Om \frac{|\nabla
u(t)|^{p(x)}}{p(x)}\,dx-\int_\Om &\frac{|\nabla u(t_0)|^{p(x)}}{p(x)}\,dx\\
&\leq \int_{t_0}^t\int_\Om h\frac{\partial u}{\partial t}\,dxds.
\end{split}
\end{equation}
Passing to the limit, we get
\[\limsup_{t\ra t^+_0}\int_\Om\frac{|\nabla u(t)|^{p(x)}}{p(x)}dx\leq\int_\Om\frac{|\nabla u(t_0)|^{p(x)}}{p(x)}dx.\]
Define $ v(t)=\nabla u(t)/(p(x))^{1/p(x)}.$ Thus we get $\DD\lim_{t\ra t^+_0}\rho_p(v(t))=\rho_p(v(t_0))$.\\ 
Now we prove the left continuity. Let $0<k\leq t-t_0$. Multiplying $(S_T)$ by $\tau_k(u)(s)=\frac{u(s+k)-u(s)}{k}$ and integrating over $(t_0,t)\times \Omega$, the convexity gives
\begin{equation}\label{eq10t}
\begin{split}
\int_{t_0}^t\int_\Om \tau_k(u)\frac{\partial u}{\partial t}\,dxds&+\int_{t}^{t+k}\int_\Om \frac{|\nabla
u(s)|^{p(x)}}{kp(x)}\,dxds\\
&-\int_{t_0}^{t_0+k}\int_\Om \frac{|\nabla u(s)|^{p(x)}}{kp(x)}\,dxds\\
&\geq \int_{t_0}^t\int_\Om \tau_k(u)h\,dxdt.
\end{split}
\end{equation}
By Dominated Convergence Theorem as $k\rightarrow 0^+$:
\begin{equation*}
\begin{split}
&\int_{t}^{t+k}\int_\Om \frac{|\nabla u(s)|^{p(x)}}{kp(x)}\,dxds\rightarrow \int_\Om \frac{|\nabla
u(t)|^{p(x)}}{p(x)}\,dx,\\
&\int_{t_0}^{t_0+k}\int_\Om \frac{|\nabla u(s)|^{p(x)}}{kp(x)}\,dxds\rightarrow\int_\Om \frac{|\nabla
u(t_0)|^{p(x)}}{p(x)}\,dx.
\end{split}
\end{equation*}
Hence \eqref{eq10t} yields
\begin{equation*}\label{eq10q}
\begin{split}
\int_{t_0}^t\int_\Om \left(\frac{\partial u}{\partial t}\right)^2\,dxds+\int_\Om \frac{|\nabla
u(t)|^{p(x)}}{p(x)}\,dx-\int_\Om &\frac{|\nabla u(t_0)|^{p(x)}}{p(x)}\,dx\\
&\geq\int_{t_0}^t\int_\Om h\frac{\partial u}{\partial t}\,dxds.
\end{split}
\end{equation*}
From the above inequality, we deduce that we have the equality in \eqref{eq10b}. This implies, using the  Dominated Convergence Theorem, that
$\rho_p(v(t))\rightarrow \rho_p(v(t_0))$ as $t\rightarrow t_0$.\\
Since $v(t)\rightharpoonup v(t_0)$ in $L^{p(x)}(\Om)$ and $\rho_p(v(t))\rightarrow \rho_p(v(t_0))$ as $t\ra t_0$, Lemma \ref{A5} implies the convergence of $v(t)$ to $v(t_0)$ in $L^{p(x)}(\Om)$. Therefore we deduce that $u\in C([0,T];\X)$.\ $\hfill \square$
\section{Existence of solution of $(P_T)$}
\noindent \textbf{Proof of Theorem \ref{PT1}:} We proceed as in the proof of Theorem \ref{sol for (S_t)} splitting the proof in several steps.\\ 
{\bf Step 1.} Existence of barrier functions.\\
Consider the equations, for $i\in\{0,1,2\}$
\begin{equation}\label{CL}
 \left\{\begin{array}{ll}
 \DD\frac{dv_i}{dt} = L_i(v_i),& \\
 v_i(0)  =(-1)^i\kappa,&
 \end{array}\right.
\end{equation}
where $\kappa= \|u_0\|_{\infty}$.\\
By Cauchy-Lipschitz Theorem, there exists $T_i^{max}\in(0,+\infty]$ and a unique maximal solution $v_i$ to \eqref{CL} on $[0,T_i^{max})$.\\
If {\bf(H1)} holds, we take $T\in (0,T_0^{max})$ otherwise, if {\bf (H2)} holds, we take $T\in (0,\min(T_1^{max};T_2^{max}))$.\\
Let $N\in \N^*$. Set $\Delta_t=\frac{T}{N}$ and consider the family $(v_i^n)$ defined by
$v_i^n=v_i(t_n)=v_i(n\Delta_t)$ for $n\in \{1,...,N\}$. Hence for any $i\in\{0,1,2\}$
\begin{equation*}\label{eq 12}
v_i^{n+1}=v_i^{n}+\int_{t_n}^{t_{n+1}}  L_i(v_i(s))ds,\quad \forall n\in \{0,...,N-1\}.
\end{equation*}
Replacing $L_1$ (resp. $L_2$) by $\min(L_1,0)$ (resp. $\max(L_2,0)$) in {\bf (H2)}, we can assume that $L_1\leq0$ and $L_2\geq0$. We get for $n\in \{0,...,N\}$, $v_1(T) \leq v_1^{n}\leq -\kappa$ and for $i=0$ or $i=2$, $\kappa \leq v_i^{n}\leq v_i(T)$.\\
\textbf{Step 2.} Semi-discretization in time of $(P_T)$.\\
Introduce the following iterative scheme $(u^n)$ defined as
\begin{equation*}\label{eq 13}
u^0=u_0\quad \mbox{ and }\quad
\Biggl\{\begin{array}{ll}
u^{n}-\Delta_t\Delta _{p(x)}u^n = u^{n-1}+\Delta_t f(x, u^{n-1})~&\text{ in }\Om,  \\
u^n = 0 ~&\text{  on   }~\pa\Om.
\end{array}
\end{equation*}
We just prove the existence of $u^1$. The conditions $\bf{(f_1)}$-$\bf{(f_2)}$ insures that $f(.,u^0)\in L^q(\Omega)$ with $q>\frac{d}{p_-}$. Thus Lemma \ref{sol of (P)} applying with $g=u^0+\Delta_tf(x,u^0)\in L^q(\Omega)$ gives the existence of $u^1\in
\X\cap L^\infty(\Om)$.\\ 
Let $u_{\TT}$ and $\tilde u_{\TT}$ be defined as in \eqref{uu} and for $t<0$, $u_{\TT}(t)=u_0$. Thus \eqref{eq2} is satisfied with $h_{\TT}(t)\eqdef f(x,u_{\TT}(t-\TT))$.\\
\textbf{Step 3.} $(u^n)$ is bounded in $L^\infty(\Om)$ uniformly in $\Delta_t$.\\
First we consider the case where {\bf (H1)} is valid. We aim that for all $n$, $|u^n|\leq v_0^n$ in $\Omega$. We just prove for $n=1$. Since $L_0$ and $v_0$ are nondecreasing, we get
\begin{equation*}
u^1-v_0^1- \Delta_t\Delta_{p(x)}u^1=\int_0^{\Delta_t} f(x,u_0)-L_0(v_0(s))ds+u_0-v_0^0\leq 0.
\end{equation*}
Multiplying the previous inequality by $(u^1-v_0^1)^+=\max(u^1-v_0^1,0)$ and integrating on $\mathcal O=\{x\in \Omega \ |\ u^1(x)>v_0^1\}$, we get
$$\int_{\mathcal O} (u^1-v_0^1)^2dx +\Delta_t\int_{\mathcal O} |\nabla u^1|^{p(x)}dx\leq 0.$$
Hence, $u^1\leq v_0^1$ and by the same method we have $-v_0^1\leq u^1$.\\
For {\bf (H2)} we claim that for all $n$, $v_1^n\leq u^n \leq v_2^n \mbox{ in } \Om.$ Let $n=1$. Since $L_1$, $L_2$, $-v_1$  and $v_2$ are nondecreasing:
\begin{equation*}
\begin{split}
&u^1-v_1^1- \Delta_t\Delta_{p(x)}u^1=\int_0^{\Delta_t} f(x,u_0)-L_1(v_1(s))ds+u_0-v_1^0\geq 0,\\
&u^1-v_2^1- \Delta_t\Delta_{p(x)}u^1=\int_0^{\Delta_t} f(x,u_0)-L_2(v_2(s))ds+u_0-v_2^0\leq 0.
\end{split}
\end{equation*}
Multiply the first inequality by $(v^1_1-u^1)^+$ and the second inequality by $(u^1-v^1_2)^+$. Integrating respectively
on
$\mathcal O_1=\{x\in \Omega \ |\  v_1^1>u^1(x)\}$ and $\mathcal O_2=\{x\in \Omega \ |\  v_2^1<u^1(x)\}$, we get
\begin{equation*}
\begin{split}
-&\int_{\mathcal O_1} (u^1-v_1^1)^2dx -\Delta_t\int_{\mathcal O_1} |\nabla u^1|^{p(x)}dx\geq 0,\\
&\int_{\mathcal O_2} (u^1-v_2^1)^2dx +\Delta_t\int_{\mathcal O_2} |\nabla u^1|^{p(x)}dx\leq 0.
\end{split}
\end{equation*}
Then $v_1^1 \leq u^1\leq v_2^1 $. By induction, we deduce that for $n\in \{0,...,N\}$, $v^n_1\leq u^n \leq v_2^n$ in $\Om$.\\
Thus we have 
\begin{equation}\label{eq20}
(u_{\TT}),\ (\tilde u_{\TT})  \mbox{ are bounded in $L^\infty(Q_T)$ uniformly in $\TT$.}
\end{equation}
and
\begin{equation*}\label{eq21}
(h_{\TT}) \mbox{ is bounded in $L^2(Q_T)$ uniformly in $\TT$}
\end{equation*}
Indeed, either {\bf (H1)} holds which implies $$|f(x,u^{n})|\leq L_0(u^n)\leq L_0(v_0(T))$$ or {\bf (H2)} holds, we have $$|f(x,u^{n})|\leq \max(-L_1(u^n),L_2(u^n))\leq \max(-L_1(v_1(T)),L_2(v_2(T))).$$ 
Hence
\begin{equation*}
\|h_{\Delta_{t}}\|_{L^2(Q_T)}^2=\Delta_t\DD\sum_{n=1}^N\|f(x,u^{n-1})\|^2_{L^{2}(\Om)}\leq C.
\end{equation*}
{\bf Step 4.} End of the proof.\\
By the same computations of Step 3. of the proof of Theorem \ref{sol for (S_t)}, we obtain estimates and we prove there exists $u\in L^\infty(0,T,\X)$ such
that
\begin{equation*}
\tilde u_{\TT},\, u_{\TT}\stackrel{*}{\rightharpoonup}  u  \text{ in }L^\infty(0,T,\X)\
\mbox{ and }\ \frac{\pa\tilde u_{\TT}}{\pa t}\rightharpoonup \frac{\pa u}{\pa t}\text { in }L^2(Q_T).
\end{equation*}
\eqref{b1} implies that $(\tilde u_{\TT})$ is equicontinuous in $C([0,T];L^r(\Om))$ for $1\leq r\leq 2$. By the
interpolation inequality  and \eqref{eq20} we obtain $(\tilde u_{\TT})$ is equicontinuous in $C([0,T];L^r(\Om))$ for
any $r>1$.\\ By \eqref{b2} and Theorem \ref{SE1}, we deduce applying the Ascoli-Arzela Theorem that (up to a subsequence)
for any $r>1$
$$\quad \tilde u_{\TT}\ra u \mbox{ in } C([0,T];L^r(\Om)).$$
Since $(u_{\TT})$ is uniformly bounded in $L^\infty(Q_T)$, $\bf{(f_1)}$ implies
\begin{equation*}
\|h_{\TT}(t)-f(.,u(t))\|_{L^2(\Om)}\leq C\|u_{\TT}(t-\TT)-u(t)\|_{L^2(\Om)}.
\end{equation*}
Hence we deduce that $h_{\TT}\ra f(.,u)$ in $L^\infty(0,T;L^2(\Om))$. Next we follow Step 4 of Theorem \ref{sol for (S_t)} and obtain that $u$ is a weak solution to $(P_T)$.\\
Now, we prove the uniqueness of the solution to $(P_T)$. Let $w$ another weak solution of $(P_T)$. By $\bf{(f_1)}$, for $t\in [0,T]$:
\begin{equation*}
\begin{split}
\frac12& \|u(t)-w(t)\|_{L^2(\Omega)}^2-\int_0^t\langle \Delta_{p(x)}u-\Delta_{p(x)}w,u-w\rangle ds\\
&=\ii
(f(x,u)-f(x,w))(u-w)dxds \leq C\int_0^t \|u(s)-w(s)\|_{L^2(\Omega)}^2 ds.
\end{split}
\end{equation*}
Since  $u\ra \Delta_{p(x)}u$ is a monotone operator from $\X$ to $\X'$, the second term in the left-hand side is nonnegative. Then, by Gronwall's Lemma, we deduce that $u\equiv w$.\\
Step 5 of the proof of Theorem \ref{sol for (S_t)} again go through and completes the proof.
$$\eqno \square$$

\noi Now we give the proof of Theorem \ref{PT3}.\ \\
\noindent \textbf{Proof of Theorem \ref{PT3}:} Firt we introduce the stationary quasilinear elliptic problem asociated to $(P_T)$:
\begin{equation*}\label{E}
 \left\{
\begin{array}{l l}
-\Delta_{p(x)} u=f(x,u)& \mbox{ in } \Omega,\\
u=0& \mbox{ on } \partial \Om.
\end{array}\right.\tag{$E$}
\end{equation*}
Thus we claim that if {\bf (C1)} or {\bf (C2)} holds there exist $\underline u,\overline u\in \X\cap L^\infty(\Om)$, a sub- and a supersolution of \eqref{E} such that $\underline u\leq u_0\leq\overline u$.\\
First, consider that {\bf (C1)} holds. For $(x,s)\in \Omega\times\R$, define 
$$G(x,s)=|\Delta_{p(x)} u_0(x)|+|f(x,s)|.$$
Consider the following problems:
\begin{equation*}
\left\{
\begin{array}{l l}
-\Delta_{p(x)}\underline u=-G(x,\underline u)& \mbox{ in } \Omega,\\
u=0& \mbox{ on } \partial \Om
\end{array}\right.
\end{equation*}
and
\begin{equation*}
 \left\{
\begin{array}{l l}
-\Delta_{p(x)}\overline u=G(x,\overline u)& \mbox{ in } \Omega,\\
u=0& \mbox{ on } \partial \Om.
\end{array}\right.
\end{equation*}
The existence of $\underline u$ and $\overline u\in \X$ follows from the sub-homogeneity of $f$ given by $\bf{(f_3)}$ (see Theorem 4.3 in \cite{FQ}) and by Corollary \ref{reg2} we have $\underline u,\ \overline u\in L^\infty(\Om)$.\\
Moreover
$$-\Delta_{p(x)}\underline u=-G(.,\underline u)\leq -\Delta_{p(x)}u_0 \ \mbox{ a.e in }\ \Omega.$$
and
$$  -\Delta_{p(x)}\overline u=G(.,\overline u)\geq -\Delta_{p(x)}u_0 \ \mbox{ a.e in }\ \Omega.$$
Hence Lemma \ref{wc} implies $\underline u\leq u_0$ and $\underline u$ is a subsolution of $(E)$.
Similarly we have that $\overline u\geq u_0$ and $\overline u$ is a supersolution of $(E)$.\\
Now, if {\bf (C2)} holds. We have the following lemma which follows from \cite{Fa,YY}:
\begin{lem}\label{31YY}
Let $p\in C^1(\overline{\Om})$ and $\lambda\in \R^+$. Let $w_\lambda\in \X\cap L^\infty(\Om)$ be the unique solution of
\begin{equation}\label{El}
 \left\{
\begin{array}{l l}
-\Delta_{p(x)} w_\lambda=\lambda& \mbox{ in } \Omega,\\
w_\lambda=0& \mbox{ on } \partial \Om.
\end{array}\right.
\end{equation}
Then, there exists two constants $C_1$ and $C_2$ which do not depend to $\lambda$ such that 
$$ \|w_\lambda\|_{L^{\infty}}\leq C_1\lambda^{\frac{1}{p_--1}} \quad \mbox{and} \quad  
w_\lambda(x) \geq C_2 \lambda^{\frac{1}{p_+-1+\mu}}\mbox{dist}(x,\partial \Omega)$$
where $\mu\in (0,1)$.
\end{lem}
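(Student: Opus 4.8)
My plan is to derive both bounds from the weak comparison principle (Lemma~\ref{wc}) by inserting between $0$ and $w_\lambda$ explicit sub-/super-barriers carrying the announced powers of $\lambda$; these barrier estimates are essentially the ones worked out in \cite{Fa,YY}, so below I mainly indicate how the exponents come out and where $p\in C^1(\overline\Om)$ enters. Existence and uniqueness of $w_\lambda$ I would obtain exactly as in Lemma~\ref{sol of (P)}, by minimising on $\X$ the strictly convex functional $u\mapsto\int_\Om\frac{|\nabla u|^{p(x)}}{p(x)}\,dx-\lambda\int_\Om u\,dx$, which is coercive by Theorem~\ref{SE1} and \eqref{a}; Corollary~\ref{reg2} then gives $w_\lambda\in L^\infty(\Om)$, and comparison with the subsolution $0$ gives $w_\lambda\ge0$. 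Since the estimates are used only for $\lambda$ large (e.g. in the proof of Theorem~\ref{PT3} under \textbf{(C2)}), I restrict to $\lambda\ge1$.

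For the upper bound I would first test the equation with $w_\lambda$: this gives $\int_\Om|\nabla w_\lambda|^{p(x)}\,dx=\lambda\int_\Om w_\lambda\,dx\le C\lambda\,\|w_\lambda\|_{\X}$ (Poincar\'e together with $L^{p(x)}(\Om)\subset L^{1}(\Om)$), whence by \eqref{a} (one may assume $\|w_\lambda\|_{\X}\ge1$, the case $\|w_\lambda\|_{\X}<1\le\lambda^{1/(p_--1)}$ being trivial) $\|w_\lambda\|_{\X}\le C\lambda^{1/(p_--1)}$, and hence $\|w_\lambda\|_{L^{p^*(x)}(\Om)}\le C\lambda^{1/(p_--1)}$ by Theorem~\ref{SE1}. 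Feeding this into the De~Giorgi iteration underlying Corollary~\ref{reg2} — the datum being the constant $\lambda$, whose contribution to each level-set estimate scales exactly as $\lambda^{1/(p_--1)}$ — then yields $\|w_\lambda\|_{L^\infty(\Om)}\le C_1\lambda^{1/(p_--1)}$.

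For the lower bound I would use that, $\partial\Om$ being smooth, $d(x)=\mathrm{dist}(x,\partial\Om)$ is $C^2$ on a collar $\Om_\delta=\{0<d<\delta\}$, with $|\nabla d|\equiv1$ and $D^2d\,\nabla d\equiv0$ there. Then, using $p\in C^1(\overline\Om)$, for $\varepsilon>0$ one has in $\Om_\delta$
\begin{equation*}
-\Delta_{p(x)}(\varepsilon d)=-\varepsilon^{p(x)-1}\Delta d-\varepsilon^{p(x)-1}(\ln\varepsilon)\,\nabla p\cdot\nabla d\le\varepsilon^{p(x)-1}\big(\|\Delta d\|_{L^\infty(\Om_\delta)}+|\ln\varepsilon|\,\|\nabla p\|_{L^\infty(\Om)}\big).
\end{equation*}
Fixing $\mu\in(0,1)$ and choosing $\varepsilon=C_2\lambda^{1/(p_+-1+\mu)}\ge1$, the right-hand side is $\le C\,C_2^{p_+-1}\,\lambda^{\frac{p_+-1}{p_+-1+\mu}}(1+\ln\lambda)$, which is $\le\lambda$ once $\lambda$ is large, precisely because $\frac{p_+-1}{p_+-1+\mu}<1$. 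Thus $\varepsilon d$ is a subsolution of $-\Delta_{p(x)}W=\lambda$ in $\Om_\delta$; since the constant $\varepsilon\delta$ is a subsolution in $\{d>\delta/2\}$, the minimum $\underline W:=\varepsilon\min(d(x),\delta)$ is a weak subsolution in all of $\Om$ vanishing on $\partial\Om$, and Lemma~\ref{wc} gives $w_\lambda\ge\underline W\ge\frac{\delta}{\mathrm{diam}\,\Om}\,\varepsilon\,d(x)$, i.e. the claimed estimate after renaming $C_2$.

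The step I expect to be the real obstacle is the control of the term $\varepsilon^{p(x)-1}(\ln\varepsilon)\,\nabla p\cdot\nabla d$ produced by differentiating $\varepsilon^{p(x)-1}$: it has no constant-exponent analogue, it is the reason the $C^1$-regularity of $p$ is needed, and it is exactly what forces the loss of the exponent $\mu$ (so that the spurious factor $1+\ln\lambda$ gets absorbed into $\lambda^{\mu/(p_+-1+\mu)}$); making this quantitative, and checking that the glued function $\varepsilon\min(d,\delta)$ is admissible in the comparison principle, is the delicate content borrowed from \cite{Fa,YY}.
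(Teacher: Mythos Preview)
The paper itself does not prove this lemma; it merely states that it ``follows from \cite{Fa,YY}''. Your proposal therefore supplies what the paper omits, and your overall strategy --- an energy estimate plus a scaled De~Giorgi iteration for the upper bound, a distance-function barrier for the lower bound, with the $C^1$ regularity of $p$ needed precisely to control the cross term $\varepsilon^{p(x)-1}(\ln\varepsilon)\,\nabla p\cdot\nabla d$ --- is indeed the approach of those references, and your identification of that logarithmic term as the source of the loss $\mu$ is correct.

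One step fails as written: the glued function $\underline W=\varepsilon\min(d,\delta)$ is \emph{not} a weak subsolution of $-\Delta_{p(x)}W=\lambda$ in $\Omega$. At the interface $\{d=\delta\}$ the flux $|\nabla\underline W|^{p(x)-2}\nabla\underline W$ jumps from $\varepsilon^{p(x)-1}\nabla d$ to $0$; testing against $\varphi\ge0$ and integrating by parts in $\{d<\delta\}$ leaves the surface term $\int_{\{d=\delta\}}\varepsilon^{p(x)-1}\varphi\,d\mathcal H^{d-1}\ge0$, which is not dominated by $\int_{\{d>\delta\}}\lambda\varphi$ when $\varphi$ concentrates near the interface. (The general rule is that the \emph{maximum} of two subsolutions is a subsolution, while the minimum generally is not.) You rightly flag the gluing as ``the delicate content borrowed from \cite{Fa,YY}'', and the remedy there is exactly to avoid a kink: replace $\min(d,\delta)$ by a $C^2$ concave profile $\phi(d)$ with $\phi(0)=0$, $\phi'(0)=1$, $\phi'\ge0$, $\phi''\le0$, so that $-\Delta_{p(x)}(\varepsilon\phi(d))$ acquires no singular part and the extra term proportional to $\phi''$ is nonpositive, hence only helps the subsolution inequality. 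With this modification your argument goes through.
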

Fix $\lambda>0$, let $w_\lambda$ the solution of \eqref{El}. Since $\beta < p_--1$ and by Lemma \ref{31YY}: for $\lambda$ large enough, $w_\lambda$ verifies
\begin{equation}\label{sur}
-\Delta_{p(x)} w_\lambda=\lambda \geq C\left(1+ C_1^\beta\lambda^{\frac{\beta}{p_--1}}\right)\geq C(1+w_\lambda^\beta)\geq |f(x,w_\lambda)|.  
\end{equation}
Moreover, since $u_0\in C^1_0(\overline \Omega)$, there exists $K>0$ such that for any $x\in \Omega$, $|u_0(x)|\leq K\mbox{dist}(x,\partial \Omega)$. Hence choosing $\lambda$ large enough, we have by Lemma \ref{31YY} $w_\lambda\geq |u_0|$ in $\overline \Omega$.\\
Set $\overline u= w_\lambda$ and $\underline u=- w_\lambda$. We deduce for $\lambda$ large enough$,  \overline u$ and $\underline u$ are a super- and a subsolution of \eqref{E} such that $\underline u\leq u_0\leq\overline u$.\\
Now we proceed as the proof of Theorem \ref{PT1}. We define the sequence $(u^n)$ as follows.
\begin{equation*}\label{iteration}
\left\{
\begin{array}{l l}
u^{n}-\Delta_t\Delta_{p(x)} u^{n}=u^{n-1}+\Delta_t f(x,u^{n-1})& \mbox{ in } \Omega,\\
u^n=0& \mbox{ on } \partial \Om
\end{array}\right.
\end{equation*}
for $n=1,2,...,N$ with $u^0=u_0$. we prove for $n\geq 1$, $\underline u\leq u^n\leq \overline u$ in $\Om$.
Indeed for $n=1$, we have
\[\underline u-u^1-\Delta_t(\Delta_{p(x)}\underline u-\Delta_{p(x)}u^1)\leq \underline
u-u^0+\Delta_t(f(x,u^0)-f(x,\underline u)).\]
Let $\La$ be the Lipschitz constant of $f$ on $[-M,M]$, where $M$ is the maximum of $\|\underline u\|_{L^\infty}$ and $\|\overline u\|_{L^\infty}$.
Then
\[\underline u-u^1-\Delta_t(\Delta_{p(x)}\underline u-\Delta_{p(x)}u^1)\leq (Id-\Delta_tf)(\underline u-u^0).\]
For $\Delta_t$ small enough, the function $Id-\Delta_t f$ is nondecreasing. Then the right-hand side of the above inequality is nonpositive and thus by Lemma \ref{wc}
we have $\underline u\leq u^1$. Similarly we prove $u^1\leq \overline u$.

By induction, for $n\geq 1$, $\underline u\leq u^n\leq\overline u$ in $\Om$. Thus $(u^n)$ is uniformly
bounded in $L^\infty(\Om)$.
The rest of the proof follows Step 3 and 4 of the proof of Theorem \ref{PT1}.\hfill$\square$

\section{Existence of mild solutions and stabilization}
In this section we prove Theorems \ref{continuity de u}, \ref{continuity u with f} and \ref{stabilization}. We first show the m-accretivity of $A=-\Delta_{p(x)}$:
\begin{prop}\label{maccretive}
Let $f$ be locally Lipschitz and nonincreasing in respect to the second variable. Assume further that $f$ sastifies ${\bf (f_4)}$. Then, $A_f$ defined by $A_f(u)\eqdef-\Delta_{p(x)}u-f(.,u)$, is m-accretive in $L^\infty(\Omega)$.
\end{prop}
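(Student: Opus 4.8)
The statement asserts two things about the operator $A_f u = -\Delta_{p(x)}u - f(\cdot,u)$ on $L^\infty(\Omega)$ with domain $\mathcal D(A)=\{u\in\X\cap L^\infty(\Omega)\,|\,\Delta_{p(x)}u\in L^\infty(\Omega)\}$: (a) \emph{accretivity}, i.e. for every $\mu>0$ and every $u,v\in\mathcal D(A)$, $\|u-v\|_{L^\infty}\le\|u-v+\mu(A_fu-A_fv)\|_{L^\infty}$; and (b) \emph{maximality}, i.e. $R(I+\mu A_f)=L^\infty(\Omega)$ for some (hence all) $\mu>0$. The natural route is first to treat $A=-\Delta_{p(x)}$ alone and then to absorb the nonincreasing perturbation $-f(\cdot,u)$.

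\textbf{Step 1: accretivity.} Fix $\mu>0$, $u,v\in\mathcal D(A)$, and set $w=u-v$ and $g=w+\mu(A_fu-A_fv)\in L^\infty(\Omega)$. The goal is $\|w\|_{L^\infty}\le\|g\|_{L^\infty}$. I would use the standard $L^\infty$-accretivity test via truncations: for $k\ge 0$ test the identity $w-g+\mu\big(-\Delta_{p(x)}u+\Delta_{p(x)}v\big) = \mu\big(f(\cdot,u)-f(\cdot,v)\big)$ against $(w-k)^+\in\X$. The $p(x)$-Laplacian term gives $\mu\int_{\{w>k\}}\big(|\nabla u|^{p(x)-2}\nabla u-|\nabla v|^{p(x)-2}\nabla v\big)\cdot\nabla(w-k)^+\,dx\ge 0$ by the monotonicity inequality for the $p(x)$-Laplacian (Appendix A, Lemma~\ref{A4}); the right-hand term $\mu\int_{\{w>k\}}\big(f(\cdot,u)-f(\cdot,v)\big)(w-k)^+\,dx\le 0$ since $f$ is nonincreasing in the second variable and $u>v$ on $\{w>k\}$. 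Hence $\int_{\{w>k\}}(w-g)(w-k)^+\,dx\le 0$, so $\int_\Omega (w-k)^+\big((w-k)^+ + (k-g)\big)\,dx\le 0$; choosing $k=\|g\|_{L^\infty}$ (assumed finite) forces $(w-k)^+=0$ a.e., i.e. $w\le\|g\|_{L^\infty}$. Repeating with $v-u$ gives the lower bound, hence $\|w\|_{L^\infty}\le\|g\|_{L^\infty}$.

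\textbf{Step 2: maximality (range condition).} It suffices to solve, for a given $\mu>0$ and an arbitrary $g\in L^\infty(\Omega)$, the elliptic problem $u-\mu\Delta_{p(x)}u-\mu f(\cdot,u)=g$ in $\Omega$, $u=0$ on $\partial\Omega$, with $u\in\mathcal D(A)$. Since $f$ is nonincreasing, the map $s\mapsto s-\mu f(x,s)$ is nondecreasing, so the principal part $u\mapsto u-\mu\Delta_{p(x)}u$ is strictly monotone and the lower-order term is of the right sign for a variational/monotone-operator argument. Concretely, I would split off a constant: by ${\bf(f_4)}$ there is $\zeta$ with $x\mapsto f(x,\zeta)\in L^\infty(\Omega)$, and writing $\tilde f(x,s)=f(x,s+\zeta)-f(x,\zeta)$ (still nonincreasing, with $\tilde f(\cdot,0)=0$) reduces to the case $f(\cdot,0)\in L^\infty$. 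Then set up the monotone operator $\mathcal A:\X\to\X'$, $\langle\mathcal A u,\varphi\rangle = \int_\Omega u\varphi + \mu\int_\Omega|\nabla u|^{p(x)-2}\nabla u\cdot\nabla\varphi - \mu\int_\Omega f(\cdot,u)\varphi$; this is bounded, hemicontinuous, strictly monotone (by Lemma~\ref{A4} and monotonicity of $-f$), and coercive on $\X$ (the $\int u^2$ and $\int|\nabla u|^{p(x)}/$ terms dominate, using $|f(\cdot,u)|\le |f(\cdot,0)|+\Lambda|u|$ on bounded sets together with Young's inequality and the embedding $\X\hookrightarrow L^2$), so by Minty–Browder it is surjective: there is $u\in\X$ with $\mathcal A u = g$ in $\X'$. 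To upgrade $u$ to $\mathcal D(A)$, i.e. $\Delta_{p(x)}u\in L^\infty(\Omega)$, I would invoke the $L^\infty$-bound of Corollary~\ref{reg2} in Appendix~C (or, if needed, a comparison with $\pm w_\lambda$ from Lemma~\ref{31YY} for a suitable $\lambda$ depending on $\|g\|_{L^\infty}$) to get $u\in L^\infty(\Omega)$; then $-\mu\Delta_{p(x)}u = g-u+\mu f(\cdot,u)\in L^\infty(\Omega)$ directly, since the right-hand side is a bounded function. This gives $u\in\mathcal D(A)$ and $(I+\mu A_f)u=g$, proving $R(I+\mu A_f)=L^\infty(\Omega)$.

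\textbf{Main obstacle.} Steps 1 and the monotone-operator surjectivity are routine; the delicate point is the regularity upgrade in Step 2 — showing that the variational solution $u\in\X$ is actually in $L^\infty(\Omega)$ with quantitative control, so that $\Delta_{p(x)}u\in L^\infty(\Omega)$ and $u\in\mathcal D(A)$. This is exactly what the appendix results ($L^\infty$-bounds for $p(x)$-Laplacian equations with right-hand side in $L^q$, $q>d/p_-$, here even $L^\infty$) are designed to supply, so the proof will lean on Corollary~\ref{reg2}; one must only check that the nonlinear term $f(\cdot,u)$, controlled by $|f(\cdot,0)|+\Lambda|u|$ after the $\tilde f$ reduction, fits the hypotheses there, which it does by a standard bootstrap/fixed-point-free argument since the equation is already solved.
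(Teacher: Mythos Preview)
Your accretivity argument (Step~1) is essentially the paper's: both test the difference equation against $(u-v-k)^+$ with $k=\|h-g\|_{L^\infty(\Omega)}$, exploit the monotonicity of $-\Delta_{p(x)}$ and of $s\mapsto -f(\cdot,s)$, and conclude. (A minor citation fix: Lemma~\ref{A4} is a convergence lemma; the pointwise monotonicity you need is \eqref{A00}, or simply the argument behind Lemma~\ref{wc}.)

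The range condition (Step~2) has a gap. You invoke Minty--Browder on $\mathcal A:\X\to\X'$, but since $f$ is only \emph{locally} Lipschitz with no global growth assumption, the map $u\mapsto f(\cdot,u)$ need not send $\X$ into $\X'$ (or even into $L^1(\Omega)$), and the bound $|f(\cdot,u)|\le|f(\cdot,0)|+\Lambda|u|$ you quote holds only ``on bounded sets'' with $\Lambda$ depending on that bound --- it cannot furnish coercivity on all of $\X$. The paper sidesteps this by a barrier-and-truncation device: for $\mu$ large the functions $\pm w_\mu$ of Lemma~\ref{31YY} are super- and subsolutions of $u+\lambda A_f u=h$, so one truncates $f$ outside $[-\|w_\mu\|_{L^\infty},\|w_\mu\|_{L^\infty}]$, minimizes the now well-defined strictly convex energy to get a solution of the truncated problem, and then uses the weak comparison principle to confine that solution to $[-w_\mu,w_\mu]$, so the truncation is inactive. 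Your mention of ``comparison with $\pm w_\lambda$'' is exactly the right ingredient, but it must come \emph{before} existence (to legitimize truncating $f$), not afterwards as a regularity upgrade. Once $u\in L^\infty(\Omega)$ is secured this way, the ``main obstacle'' you flag dissolves: $f(\cdot,u)\in L^\infty(\Omega)$ by $\bf{(f_4)}$ and local Lipschitzness, hence $-\Delta_{p(x)}u=(h-u)/\lambda+f(\cdot,u)\in L^\infty(\Omega)$ directly, and no appeal to Corollary~\ref{reg2} is needed.
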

\begin{proof}
 First, let $h\in L^\infty(\Omega)$ and $\lambda>0$. Then,
\begin{equation*}
\left\{\begin{array}{ll}
 u+\lambda A_f(u)=h &\mbox{in }\Omega,\\
 u =0, &\mbox{on } \partial\Omega
\end{array}\right.
\end{equation*}
admits a unique solution, $u\in\X\cap L^\infty(\Omega)$. 
Indeed, for $\mu>0$ large enough $w_{\mu}$ and $-w_{\mu}$ defined in \eqref{El} in Lemma \ref{31YY} are respectively supersolution and subsolution to the above equation and then from the weak comparison principle, $u\in [-w_{\mu},w_\mu]$ and $u$ is obtained by a minimization argument and  a truncation argument. The uniqueness of the solution follows from the strict convexity of the associated energy functional. Next we prove the accretivity of $A_f$. Let $h$ and $g\in L^\infty(\Omega)$ and set $u$ and $v$ the unique solutions to
\begin{equation*}
\begin{aligned}
& u+\lambda A_fu=h\quad\mbox{in }\,\Omega,\\
& v+\lambda A_fv=g\quad\mbox{in }\,\Omega.
\end{aligned}
\end{equation*}
Substracting the two above equations and using the test function $w=(u-v-\|h-g\|_{L^\infty(\Omega)})^+$, we get $u-v\leq\|h-g\|_{L^\infty(\Omega)}$ and reversing the roles of $u$ and $v$, we get that $\|u-v\|_{L^\infty(\Omega)}\leq\|h-g\|_{L^\infty(\Omega)}$. This proves the proposition.
\end{proof}
\noindent Next, we prove Theorem \ref{continuity de u}.\\
\noindent{\bf Proof of Theorem \ref{continuity de u}:} We follow the approach in the proof of Theorems 4.2 and 4.4 in \cite{Ba}. Let $u_0$, $v_0$ be in $\overline{{\mathcal D}(A)}^{L^\infty(\Omega)}$.
For $z\in {\mathcal D}(A)$ and $r,k$ in $L^\infty(Q_T)$, set
$$
\varphi(t,s)=\|r(t)-k(s)\|_{L^\infty(\Om)}\quad \forall \, (t,s)\in [0,T]\times [0,T];
$$
\begin{equation*}
\begin{split}
b(t,r,k)&=\|u_0-z\|_{L^\infty(\Om)}+ \|v_0-z\|_{L^\infty(\Om)}+|t|\|A z \|_{L^\infty(\Om)}\\
&+\int_0^{t^+}\|r(\tau)\|_{L^\infty(\Om)} {\rm d}\tau+\int_0^{t^-}\|k(\tau)\|_{L^\infty(\Om)} {\rm d}\tau,\;t\in [-T,T],
\end{split}
\end{equation*}
and
$$
\Psi(t,s)=b(t-s,r,k)+
\left\{
\begin{array}{ll}
\displaystyle \int_0^{s}\varphi(t-s+\tau,\tau){\rm d}\tau & \mbox{ if } 0\leq s\leq t\leq T,\\
\displaystyle \int_0^{t}\varphi(\tau,s-t+\tau) {\rm d}\tau &  \mbox{ if } 0\leq t\leq s\leq T,
\end{array}
\right.
$$
the solution of
\begin{equation}\label{eqPsi}
\left\{
\begin{array}{rcll}
\displaystyle \frac{\partial \Psi}{\partial t}(t,s)+\frac{\partial \Psi}{\partial s}(t,s)&=&\displaystyle\varphi(t,s)\; &(t,s)\in [0,T]\times [0,T],\\
\displaystyle \Psi(t,0)&=& \displaystyle b(t,r,k)\; &t\in [0,T],\\
 \displaystyle  \Psi(0,s)& =& \displaystyle b(-s,r,k)\; &s\in [0,T].
\end{array}
\right.
\end{equation}
Moreover, let denote by $(u_\epsilon^n)$ the solution of \eqref{eq1} with $\Delta_t=\epsilon$, $h=r$, $r^n=\frac{1}{\epsilon}\int_{(n-1)\epsilon}^{n\epsilon}r(\tau,\cdot)d\tau$ and  $(u_\eta^n)$ the solution of \eqref{eq1} with  $\Delta_t=\eta$, $h=k$, $k^n=\frac{1}{\eta}\int_{(n-1)\eta}^{n\eta}k(\tau,\cdot)d\tau$ respectively. For $(n,m)\in \mathbb{N}^*$ elementary calculations lead to
\begin{equation*}
\begin{split}
u_\epsilon^n-u_\eta^m+\frac{\epsilon\eta}{\epsilon+\eta}(A u_\epsilon^n-A u_\eta^m)=&\frac{\eta}{\epsilon+\eta}(u_\epsilon^{n-1}-u_\eta^m)\\
+\frac{\epsilon}{\epsilon+\eta}(u_\epsilon^n-u_\eta^{m-1})+&\frac{\epsilon\eta}{\epsilon+\eta}(r^n-k^m),
\end{split}
\end{equation*}
and since $A$ is m-accretive in $L^\infty(\Omega)$ we first verify that $\Phi_{n,m}^{\epsilon,\eta}=\|u_\epsilon^n-u_\eta^m\|_{L^\infty(\Om)}$ obeys
\begin{equation*}
\begin{split}
&\Phi_{n,m}^{\epsilon,\eta}\leq \frac{\eta}{\epsilon+\eta}\Phi_{n-1,m}^{\epsilon,\eta}+\frac{\epsilon}{\epsilon+\eta}\Phi_{n,m-1}^{\epsilon,\eta}+\frac{\epsilon\eta}{\epsilon+\eta}\|r^n-k^m\|_\infty,\\
&\Phi_{n,0}^{\epsilon,\eta}\leq  b(t_n,r_\epsilon,k_\eta)\quad\mbox{ and }\quad  \Phi_{0,m}^{\epsilon,\eta}\leq  b(-s_m ,r_\epsilon,k_\eta),
\end{split}
\end{equation*}
and thus, with an easy inductive argument, that $\Phi_{n,m}^{\epsilon,\eta}\leq \Psi_{n,m}^{\epsilon,\eta}$ where $\Psi_{n,m}^{\epsilon,\eta}$ satisfies
\begin{equation*}
\begin{split}
&\Psi_{n,m}^{\epsilon,\eta}= \frac{\eta}{\epsilon+\eta}\Psi_{n-1,m}^{\epsilon,\eta}+\frac{\epsilon}{\epsilon+\eta}\Psi_{n,m-1}^{\epsilon,\eta}+\frac{\epsilon\eta}{\epsilon+\eta}\|h_\epsilon^n-h_\eta^m\|_\infty,\\
&\Psi_{n,0}^{\epsilon,\eta}=  b(t_n,r_\epsilon,k_\eta)\quad\mbox{ and }\quad  \Psi_{0,m}^{\epsilon,\eta}=  b(-s_m ,r_\epsilon,k_\eta).
\end{split}
\end{equation*}
For $(t,s)\in (t_{n-1},t_n)\times (s_{m-1},s_m)$, set
\begin{equation*}
\begin{split}
&\varphi^{\epsilon,\eta}(t,s)=\|r_\epsilon(t)-k_\eta(s)\|_{\infty},\\
&\Psi^{\epsilon,\eta}(t,s)=\Psi_{n,m}^{\epsilon,\eta},\\
&b_{\epsilon,\eta}(t,r,k)=b(t_n,r_\epsilon,k_\eta)
\end{split}
\end{equation*}
and $$b_{\epsilon,\eta}(-s,r,k)=b(-s_m,r_\epsilon,k_\eta).$$
 Then $\Psi^{\epsilon,\eta}$ satisfies the following discrete version of \eqref{eqPsi}:
\begin{equation*}
\begin{split}
&\frac{\Psi^{\epsilon,\eta}(t,s)-\Psi^{\epsilon,\eta}(t-\epsilon,s)}{\epsilon}+\frac{\Psi^{\epsilon,\eta}(t,s)-\Psi^{\epsilon,\eta}(t,s-\eta)}{\eta}= \varphi^{\epsilon,\eta}(t,s),\\
&\Psi^{\epsilon,\eta}(t,0)= b_{\epsilon,\eta}(t,r,k)\quad\mbox{ and }\quad  \Psi^{\epsilon,\eta}(0,s)=  b_{\epsilon,\eta}(s,r,k),
\end{split}
\end{equation*}
and from $b_{\epsilon,\eta}(\cdot,r,k)\to b(\cdot,r,k)$ in $L^\infty([0,T])$. Furthermore,
\begin{equation*}
\begin{split}
&\sum_{n=1}^{N_n}\int_{t_{n-1}}^{t_n}\Vert r(s)-r_n\Vert_\infty{\rm d}s\to 0, \quad\mbox{ as } \epsilon\to 0^+, \\
&\sum_{m=1}^{N_m}\int_{s_{m-1}}^{s_m}\Vert k(s)-k_m\Vert_\infty{\rm d}\tau\to 0\quad\mbox{ as }\eta\to 0^+.
\end{split}
\end{equation*}
The above statements follow easily from the fact that $r, k\in L^1(0,T; L^\infty(\Omega))$ and a density argument.\\
Then, we deduce that $\rho_{\epsilon,\eta}=\|\Psi^{\epsilon,\eta}-\Psi\|_{L^\infty([0,T]\times [0,T])} \to 0$ as $(\epsilon,\eta)\to 0$ (see for instance  \cite[Chap.4, Lemma~4.3, p.~136]{Ba} and \cite[Chap.4, proof of Theorem~4.1, p.~138]{Ba}). Then from
\begin{equation}
\|u_\epsilon(t)-u_\eta(s)\|_{\infty}=\Phi^{\epsilon, \eta}(t,s)\leq \Psi^{\epsilon, \eta}(t,s)\leq \Psi(t,s)+\rho_{\epsilon,\eta},\label{eqconv}
\end{equation}
we obtain with $t=s$, $r=k=h$, $v_0=u_0$:
$$
\|u_\epsilon(t)-u_\eta(t)\|_{L^\infty(\Om)}\leq 2\|u_0-z\|_{L^\infty(\Om)}+\rho_{\epsilon, \eta},
$$
and since $z$ can be chosen in $\mathcal{D}(A)$ arbitrary close to $u_0$, we deduce that $u_\epsilon$ is a Cauchy sequence in $L^\infty(Q_T)$ and then that $u_\epsilon \to u$ in $L^\infty(Q_T)$. Thus, passing to the limit in \eqref{eqconv} with $r=k=h$, $v_0=u_0$ we obtain 
\begin{equation*}
\begin{split}
\|u(t)-u(s)\|_{L^\infty(\Om)}&\leq \int_{0}^{\max(t,s)}\!\!\!\|h(|t-s|+\tau)-h(\tau)\|_{L^\infty(\Om)}{\rm d}\tau\\
&+2\|u_0-z\|_{L^\infty(\Om)}+\int_0^{|t-s|}\|h(\tau)\|_{L^\infty(\Om)}{\rm d}\tau\\
&+|t-s|\|A z \|_{L^\infty(\Om)},
\end{split}
\end{equation*}
which, together with the density ${\mathcal D}(A)$ in $L^\infty(\Omega)$ and  $h\in L^1(0,T;L^\infty(\Omega))$, yields $u\in C([0,T];L^\infty(\Omega))$.\\
Analogously, from \eqref{eqconv} with $\varepsilon=\eta=\Delta_t$, $r=k=h$, $v_0=u_0$ and $t=s+\Delta_t$ we deduce that
\begin{equation*}
\begin{split}
\|u_{\Delta_t}(t)-\Tilde u_{\Delta_t}(t)\|_{L^\infty(\Om)}&\leq 2\| u_{\Delta_t}(t)- u_{\Delta_t}(t-\Delta_t)\|_{L^\infty(\Om)}\\
& \leq 4\|u_0-z\|_{L^\infty(\Om)}+2\int_{0}^{t}\|h(\Delta_t+\tau)-h(\tau)\|_{\infty}{\rm d}\tau\\
&+2\int_0^{\Delta_t } \|h(\tau)\|_{L^\infty(\Om)}{\rm d}\tau+2\Delta_t\|A z \|_{L^\infty(\Om)}
\end{split}
\end{equation*}
which gives the limit $\Tilde u_{\Delta_t}\to u$ in $C([0,T];L^\infty(\Omega))$ as $\Delta_t\to 0^+$. Note that since $\Tilde u_{\Delta_t}\in C([0,T];C_0(\overline{\Omega}))$, the uniform limit $u$ belongs to $C([0,T];C_0(\overline{\Omega}))$. 
Moreover, passing to the limit in \eqref{eqconv} with $t=s$ we obtain
$$
\|u(t)-v(t)\|_{L^\infty(\Om)}\leq \|u_0-z\|_{L^\infty(\Om)}+\|v_0-z\|_{L^\infty(\Om)}+\int_0^t\|r(\tau)-k(\tau)\|_{\infty} {\rm d}\tau,
$$
and \eqref{estimation2} follows since we can choose $z$ arbitrary close to $v_0$. 
Finally, if $A u_0\in L^\infty(\Omega)$ and $h\in W^{1,1}(0,T;L^\infty(\Omega))$ and if we assume (without loss of generality) that $t>s$ then with $z=v_0=u(t-s)$ and $(r,k)=(h,h(\cdot+t-s))$ in the last above inequality we obtain 
\begin{equation}\label{mes}
\begin{split}
\|u(t)-u(s)\|_{L^\infty(\Om)}\leq  \|u_0&-u(t-s)\|_{L^\infty(\Om)}\\
&+\int_{0}^{s}\|h(\tau)-h(\tau+t-s)\|_{L^\infty(\Om)}{\rm d}\tau.
\end{split}
\end{equation}
From  \eqref{estimation2} with $v=u_0$, $k=A u_0$: 
\begin{equation}\label{mest}
 \|u_0-u(t-s)\|_{L^\infty(\Om)}\leq \int_0^{t-s}\|A u_0-h(\tau)\|_{L^\infty(\Om)}{\rm d}\tau.
\end{equation}
Using \eqref{mest} and  gathering Fubini's Theorem and
$$h(\tau)-h(\tau+t-s)=\int_\tau^{\tau+t-s}\frac{{\rm d}h}{{\rm d}t}(\sigma ){\rm d}\sigma,$$
the right-hand side of \eqref{mes} is smaller than
\begin{equation*}
\begin{split}
(t-s)\|A u_0-h(0)\|_{L^\infty(\Om)}+&\int_{0}^{t-s}\|h(0)-h(\tau)\|_{L^\infty(\Om)} {\rm d}\tau\\
&+ \int_{0}^{s}\|h(\tau)-h(\tau+t-s)\|_{L^\infty(\Om)} {\rm d}\tau.
\end{split}
\end{equation*}
Thus
\begin{equation}\label{mainestimate}
\begin{split}
\|u(t)-u(s)\|_{L^\infty(\Om)}\leq  (t-s)\|A u_0&-h(0)\|_{L^\infty(\Om)}\\
&+(t-s)\int_0^{T}\left \|\frac{{\rm d}h}{{\rm d}t}(\tau)\right\|_{L^\infty(\Om)} {\rm d}\tau.
\end{split}
\end{equation}
Dividing the expression \eqref{mainestimate} by $|t-s|$, 
we get that $u$ is a Lipschitz function  and since $\frac{\partial u}{\partial t}\in L^2(Q_T)$, passing to the limit $|t-s|\to 0$ we obtain that 
$\frac{u(t)-u(s)}{t-s}\to \frac{\partial u}{\partial t}$ as $s\to t$  weakly in $L^2(Q_T)$ and *-weakly in $L^\infty(Q_T)$. Furthermore,
\begin{equation*}
\left\|\frac{\partial u}{\partial t}\right\|_{L^\infty(\Om)}\leq \displaystyle\liminf_{s\to t}\frac{\|u(t)-u(s)\|_{L^\infty(\Om)}}{|t-s|}.
\end{equation*}
\medskip Therefore, we get $u\in W^{1,\infty}(0,T;L^\infty(\Omega))$ as well as inequality \eqref{estimationbiss2}.\qed

\noindent The proof of Theorem \ref{continuity u with f} follows easily:\\
\noindent{\bf Proof of Theorem \ref{continuity u with f}}:
The existence of mild solutions can be obtained similarly as in the proof of Theorem \ref{continuity de u} taking into account the $L^\infty$-bound given by the barrier functions $v_0$, $v_1$ and $v_2$.
{\it(i)} is the consequence of \eqref{estimation2} together with the fact that $f$ is locally Lipschitz and the Gronwall Lemma.

Regarding assertion {\it(ii)}, we follow the proof of Proposition \ref{continuity de u}: assume without loss of generality that $t>s$. Then, 
\begin{equation*}
\begin{split}
\|u(t)-u(s)\|_{L^\infty(\Omega)}&\leq \|u_0-u(t-s)\|_{L^\infty(\Omega)}\\
&+\int_0^s\|f(x,u(\tau))-f(x,u(\tau+t-s))\|_{L^\infty(\Omega)}{\rm d}\tau.
\end{split}
\end{equation*}
From assertion {\it (i)} and the fact that $f$ is Lipschitz on $[v_1(T), v_2(T)]$, it follows that
\begin{equation*}
\begin{aligned}
 \|u(t)\!-u(s)\|_{L^\infty(\Omega)}\!&\leq \|u_0\!-u(t-s)\|_{L^\infty(\Omega)}\!+\omega\!\!\int_0^s\!\!e^{\omega \tau}{\rm d}\tau \|u_0-u(t-s)\|_{L^\infty(\Omega)}\\
& \leq e^{\omega s}\|u_0-u(t-s)\|_{L^\infty(\Omega)}.
\end{aligned}
\end{equation*}
Now, we estimate the term $\|u_0-u(t-s)\|_{L^\infty(\Omega)}$ in the following way:
\begin{equation*}
\begin{split}
\|u_0-u(t-s)\|_{L^\infty(\Omega)}&\leq\int_0^{t-s}\|A u_0-f(x,u(\tau))\|_{L^\infty(\Omega)}{\rm d}\tau\\
&\leq (t-s)\|A u_0-f(x,u_0)\|_{L^\infty(\Omega)}\\
&+
\omega\int_0^{t-s}\|u_0-u(\tau)\|_{L^\infty(\Omega)}
{\rm d}\tau.
\end{split}
\end{equation*}
From Gronwall lemma, we deduce that
\begin{equation}
\nonumber
\|u_0-u(t-s)\|_{L^\infty(\Omega)}\leq(t-s)e^{\omega(t-s)}\|A u_0-f(x,u_0)\|_{L^\infty(\Omega)}.
\end{equation}
Gathering the above estimates, we get
\begin{equation*}
\|u(t)-u(s)\|_{L^\infty(\Omega)}\leq(t-s)e^{\omega t}\|A u_0-f(x,u_0)\|_{L^\infty(\Omega)}.
\end{equation*}
 Then, the rest of the proof follows with the same arguments as in the proof \medskip of Theorem \ref{continuity de u}.\qed

\noindent We are ready now to prove our stabilization result:\\
{\bf Proof of Theorem \ref{stabilization}}: From Proposition \ref{maccretive}, $A_f(u)=-\Delta_{p(x)}u-f(x,u)$ is m-accretive in $L^\infty(\Omega)$ and according to the remark \ref{omega-acretivity}, Theorem \ref{continuity u with f} holds with $u_0\in C^1_0(\overline{\Omega})$ replacing $A$ by $A_f$, the barrier functions  $v_0$, $v_1$ and $v_2$ by the subsolution $-w_{\mu}$ and the supersolution $w_{\mu}$ respectively and for $\mu>0$ large. Furthermore, $\omega=0$ in assertion {\it(i)} of Theorem \ref{continuity u with f} and the solution is global. Now, from the comparison principle, the solution $u(t)$ emanating from $u_0$
belongs to the conical shell $[u_1(t),u_2(t)]$ where $u_1$ and $u_2$ are the mild solutions with initial data $-w_{\mu}$ and $w_{\mu}\in {\mathcal D}(A_f)$, respectively. Again from the weak comparision principle, $t\to u_1(t)$ and $t\to u_2(t)$ are nondecreasing and nonincreasing respectively. Furthermore, from the uniqueness of the mild solution, $u_1$ and $u_2$ converge in $L^\infty(\Omega)$ to the stationary solution, $u_\infty$, to $(P_T)$ which is unique  from the monotonicity of $A_f$. Then, $u(t)\to u_\infty$ as $t\to\infty$.\qed

\begin{rem}
If one assumes in addition that $f(x,0)\geq 0$ for all $x\in \Omega$, then it is easy to prove that $u_\infty$ is positive and if $u_0$ is nonegative, $u(t)$ is nonnegative for all $t\geq 0$.
\end{rem}
\section*{Acknowledgement} We wish to thank Guy Vallet for his time and his advices. Our discussions about the problem always were very helpful and beneficial. The authors were funded by IFCAM (Indo-French Centre for Applied Mathematics) under the project ''Singular phenomena in reaction diffusion equations and in conservation laws''. 
\appendix
\section{Algebraic tools}
\noindent We recall suitable inequalities due to Simon \cite{S2}: for all $u,\ v\in \R^d$
\begin{equation}\label{A0}
\left||u|^{p-2}u-|v|^{p-2}v\right|\leq \left\{
\begin{array}{l l}
c |u-v|(|u|+|v|)^{p-2} & \mbox{if } p\geq 2;\\
c |u-v|^{p-1} &\mbox{if } p\leq 2;
\end{array}\right.
\end{equation}
\begin{equation}\label{A00}
<|u|^{p-2}u-|v|^{p-2}v, u-v>\geq \left\{
\begin{array}{l l}
\tilde c |u-v|^{p} & \mbox{if } p\geq 2;\\
\tilde c \displaystyle{\frac{|u-v|^{2}}{(|u|+|v|)^{2-p}} }&\mbox{if } p\leq 2
\end{array}\right.
\end{equation}
where $c,\ \tilde c$ are positive constants and $<.,.>$ is the scalar product of $\R^d$.
\begin{lem}\label{A6}
Let $p\in L^\infty(\Omega)$ such that $p\geq 0$, $p\not\equiv 0$. Let $q\in \mathcal P(\Omega)$ such that $p(x)q(x)\geq 1$
a.e. on $\Omega$. Then for every $f\in L^{p(x)q(x)}(\Omega)$,
\begin{equation}
\|f^{p(x)}\|_{L^{q(x)}(\Om)}\leq \|f\|^{p_-}_{L^{p(x)q(x)}(\Om)}+\|f\|^{p_+}_{L^{p(x)q(x)}(\Om)}.
\end{equation}
\end{lem}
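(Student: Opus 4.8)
I would exhibit a single $\lambda>0$ such that $\rho_q\!\big(f^{p(x)}/\lambda\big)\le1$ while $\lambda\le\|f\|_{L^{p(x)q(x)}(\Om)}^{p_-}+\|f\|_{L^{p(x)q(x)}(\Om)}^{p_+}$; the definition of the Luxemburg norm then gives $\|f^{p(x)}\|_{L^{q(x)}(\Om)}\le\lambda$, which is the claim. After discarding the trivial case $f\equiv0$, set $\mu\eqdef\|f\|_{L^{p(x)q(x)}(\Om)}>0$ and note that $p>0$ a.e.\ (this is implicit in the hypothesis $p(x)q(x)\ge1$), so $f^{p(x)}$ is defined a.e.; the candidate is simply $\lambda\eqdef\mu^{p_-}+\mu^{p_+}$.

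The argument rests on two elementary observations. First, monotonicity of $t\mapsto\mu^t$ together with $p_-\le p(x)\le p_+$ gives $\mu^{p(x)}\le\max(\mu^{p_-},\mu^{p_+})\le\lambda$ for a.e.\ $x$, hence $\big(\mu^{p(x)}/\lambda\big)^{q(x)}\le1$ a.e.\ (using $q(x)\ge1$). Second, $\|f/\mu\|_{L^{p(x)q(x)}(\Om)}=1$, so the definition of the Luxemburg norm (cf.\ Proposition~\ref{P1}) yields $\rho_{p(x)q(x)}(f/\mu)\le1$.

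Then I would factor $|f|^{p(x)q(x)}=\mu^{p(x)q(x)}\,(|f|/\mu)^{p(x)q(x)}$ and compute, for a.e.\ $x$ with $q(x)<\infty$,
\[
\left(\frac{|f(x)|^{p(x)}}{\lambda}\right)^{q(x)}=\left(\frac{\mu^{p(x)}}{\lambda}\right)^{q(x)}\left(\frac{|f(x)|}{\mu}\right)^{p(x)q(x)}\le\left(\frac{|f(x)|}{\mu}\right)^{p(x)q(x)},
\]
so that $\int_{\{q<\infty\}}\big(|f|^{p(x)}/\lambda\big)^{q(x)}\,dx\le\rho_{p(x)q(x)}(f/\mu)\le1$. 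Since $p>0$ a.e., on $\{q=\infty\}$ one also has $p(x)q(x)=\infty$, so $\rho_{p(x)q(x)}(f/\mu)\le1$ forces $|f|\le\mu$ a.e.\ there, whence $|f|^{p(x)}\le\mu^{p(x)}\le\lambda$ a.e.\ and the essential supremum term of $\rho_q(f^{p(x)}/\lambda)$ is also $\le1$. Adding the two pieces — tracking how the bound $\rho_{p(x)q(x)}(f/\mu)\le1$ is shared between the integral over $\{q<\infty\}$ and the supremum over $\{q=\infty\}$, and using $p_-\ge1$ — gives $\rho_q(f^{p(x)}/\lambda)\le1$, and the lemma follows.

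The one delicate point is precisely this last step when $\{q=\infty\}$ has positive measure: one must ensure that the integral contribution and the essential supremum contribution to $\rho_q(f^{p(x)}/\lambda)$ together do not exceed $1$, which is why each is estimated by the corresponding portion of $\rho_{p(x)q(x)}(f/\mu)$ rather than by $1$ outright. In all the uses made of this lemma one in fact has $q\in L^\infty(\Om)$, so that the essential supremum term vanishes and the displayed modular estimate of the third paragraph already closes the argument; everything else is routine.
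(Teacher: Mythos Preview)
Your proof is correct and follows essentially the same route as the paper: both exhibit a $\lambda$ with $\rho_q(f^{p(x)}/\lambda)\le1$ by writing $|f|^{p(x)q(x)}=\mu^{p(x)q(x)}(|f|/\mu)^{p(x)q(x)}$ (with $\mu=\|f\|_{L^{p(x)q(x)}}$) and using $\rho_{p(\cdot)q(\cdot)}(f/\mu)\le1$ together with $\mu^{p(x)}\le\lambda$. The only cosmetic difference is that the paper case-splits on $\mu\le1$ versus $\mu>1$ to take $\lambda=\mu^{p_-}$ or $\lambda=\mu^{p_+}$ (so in fact only one of the two terms is ever needed), whereas you take $\lambda=\mu^{p_-}+\mu^{p_+}$ directly; the paper's proof also silently treats $q$ as finite, a point you correctly flag.
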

\begin{proof}
To simplify the notations we set $\alpha(.)=p(.)q(.)$. Let $f\in L^{\alpha(x)}(\Omega)$, we define
$\beta =p_-$  if $\  \|f\|_{L^{\alpha(x)}(\Om)}\leq 1$ and $\beta=p_+$ if $\ \|f\|_{L^{\alpha(x)}(\Om)}>1$.
Then, for $\lambda =\|f\|_{L^{\alpha(x)}(\Om)}$
$$\rho_q\left(\frac{f^{p(x)}}{\lambda^\beta}\right)=\int_\Omega
\left|\frac{f}{\|f\|_{L^{\alpha(x)}(\Om)}}\right|^{\alpha(x)}.\frac{1}{\lambda^{\beta q(x)-\alpha(x)}}dx\leq 1 .$$
Hence by the definition of the norm of $L^{\alpha(x)}(\Om)$, we obtain the estimate.
\end{proof}
\noindent Now we prove a  technical inequality in the case $p_+\leq 2$.
\begin{lem}\label{A3} Let $p\in C(\overline \Omega)$, $1<p_-\leq p_+\leq 2$. Then, there exists $C>0$ such
that
for any $u,\, v\in \X$
\begin{equation}\label{eqA1}
\begin{split}
\int_\Omega (|\nabla u|^{p(x)-2}\nabla u&-|\nabla v|^{p(x)-2}\nabla v).\nabla(u-v)dx\\
&\geq C\left(\frac{\int_\Om
|\nabla(u-v)|^{p(x)}dx}{\|(|\nabla u|+|\nabla v|)^{\alpha(x)}\|_{L^{\frac{2}{2-p(x)}}(\Om)}}\right)^\gamma,
\end{split}
\end{equation}
where $\alpha(x)=\frac{p(x)(2-p(x))}{2}$ and $\gamma\in \{\frac{2}{p_+};\frac{2}{p_-}\}$.
\end{lem}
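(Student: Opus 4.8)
The plan is to reduce the inequality to its pointwise counterpart \eqref{A00} in the regime $p(x)\le 2$, and then to pass from the resulting weighted $L^2$ integral to the $L^{p(x)}$ modular of $\nabla(u-v)$ by means of the generalized H\"older inequality \eqref{hi} and the modular--norm relations \eqref{a}--\eqref{b}.

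First I would apply \eqref{A00} pointwise with $a=\nabla u(x)$, $b=\nabla v(x)$ and exponent $p(x)\le 2$ (on $\{|\nabla u(x)|+|\nabla v(x)|=0\}$ both sides vanish, so nothing is needed there) and integrate over $\Omega$, obtaining
$$
\int_\Omega\big(|\nabla u|^{p(x)-2}\nabla u-|\nabla v|^{p(x)-2}\nabla v\big)\cdot\nabla(u-v)\,dx\ \ge\ \tilde c\,I,
\qquad I\eqdef\int_\Omega\frac{|\nabla(u-v)|^2}{(|\nabla u|+|\nabla v|)^{2-p(x)}}\,dx .
$$
The key observation is the pointwise factorization
$$
|\nabla(u-v)|^{p(x)}=\Phi^{\,p(x)/2}\,\big(|\nabla u|+|\nabla v|\big)^{\alpha(x)},\qquad
\Phi\eqdef\frac{|\nabla(u-v)|^2}{(|\nabla u|+|\nabla v|)^{2-p(x)}},\quad\alpha(x)=\frac{p(x)(2-p(x))}{2},
$$
which suggests applying \eqref{hi} to this product with the conjugate pair $q_1(x)=\tfrac{2}{p(x)}$ and $q_2(x)=\tfrac{2}{2-p(x)}$:
$$
\int_\Omega|\nabla(u-v)|^{p(x)}\,dx\ \le\ C\,\big\|\Phi^{\,p(x)/2}\big\|_{L^{2/p(x)}(\Omega)}\,\big\|(|\nabla u|+|\nabla v|)^{\alpha(x)}\big\|_{L^{2/(2-p(x))}(\Omega)} .
$$
The second norm is finite since $\alpha(x)\tfrac{2}{2-p(x)}=p(x)$ and $u,v\in\X$, and it is exactly the denominator appearing in the statement.

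Next I would compute the modular of the first factor, $\rho_{2/p(x)}\big(\Phi^{\,p(x)/2}\big)=\int_\Omega\Phi\,dx=I$. Since $2/p(x)\in L^\infty(\Omega)$ has infimum $2/p_+$ and supremum $2/p_-$, the relations \eqref{a}--\eqref{b} give $I\ge\big\|\Phi^{\,p(x)/2}\big\|_{L^{2/p(x)}}^{\gamma}$, with $\gamma=2/p_+$ when that norm is $\ge 1$ and $\gamma=2/p_-$ when it is $\le 1$. Combining this with the H\"older bound above (which says $\big\|\Phi^{\,p(x)/2}\big\|_{L^{2/p(x)}}\ge C^{-1}\big(\int_\Omega|\nabla(u-v)|^{p(x)}\,dx\big)\big/\big\|(|\nabla u|+|\nabla v|)^{\alpha(x)}\big\|_{L^{2/(2-p(x))}}$) and with the first displayed estimate yields
$$
\int_\Omega\big(|\nabla u|^{p(x)-2}\nabla u-|\nabla v|^{p(x)-2}\nabla v\big)\cdot\nabla(u-v)\,dx\ \ge\ \tilde c\,C^{-\gamma}\left(\frac{\int_\Omega|\nabla(u-v)|^{p(x)}\,dx}{\big\|(|\nabla u|+|\nabla v|)^{\alpha(x)}\big\|_{L^{2/(2-p(x))}(\Omega)}}\right)^{\!\gamma},
$$
and since $1\le\gamma\le 2/p_-$ and $C\ge 1$ one may take the $u,v$-independent constant $\tilde c\,C^{-2/p_-}$.

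The routine points are the harmless degeneracy set $\{|\nabla u|+|\nabla v|=0\}$ and the check that $q_1,q_2$ form an admissible conjugate pair (this is where $p_+\le 2$ and $p\in C(\overline{\Omega})$ enter: $q_1$ is bounded, and $q_2$ is handled by the semimodular definition on $\{p(x)=2\}$, where $\alpha\equiv 0$). I expect the only genuinely delicate step to be the correct choice of exponents in \eqref{hi}: they must be arranged so that raising the first factor to the power $q_1(x)$ collapses its modular exactly onto $I$; once that is in place, the remainder is bookkeeping with \eqref{a}--\eqref{b}.
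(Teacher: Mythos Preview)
Your argument is correct and is essentially the same as the paper's: apply the pointwise Simon inequality \eqref{A00} to bound the left-hand side below by $I=\int_\Omega \Phi\,dx$, then use the variable-exponent H\"older inequality with the conjugate pair $\frac{2}{p(x)},\frac{2}{2-p(x)}$ to estimate $\int_\Omega|\nabla(u-v)|^{p(x)}\,dx$ by $\|\Phi^{p(x)/2}\|_{L^{2/p(x)}}$ times the denominator, and finally invoke the modular--norm relations \eqref{a}--\eqref{b} (noting that $\rho_{2/p(x)}(\Phi^{p(x)/2})=I$) to pick $\gamma\in\{2/p_+,2/p_-\}$ according to whether that norm is $\ge 1$ or $\le 1$. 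Your explicit factorization $|\nabla(u-v)|^{p(x)}=\Phi^{p(x)/2}(|\nabla u|+|\nabla v|)^{\alpha(x)}$ and your remark about the degenerate set and the admissibility of the exponent pair when $p_+=2$ are helpful clarifications, but the proof is the same.
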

\begin{proof}
First, H\"older inequality implies
\begin{equation}\label{eqA2}
\begin{split}
\int_\Om |\nabla(u-v)|^{p(x)}dx &\left(\|(|\nabla u|+|\nabla v|)^{\alpha(x)}\|_{L^{\frac{2}{2-p(x)}}(\Om)}\right)^{-1}\\
&\leq
C\left\|\frac{|\nabla(u-v)|^{p(x)}}{(|\nabla u|+|\nabla v|)^{\alpha(x)}}\right\|_{L^{\frac{2}{p(x)}}(\Om)} \eqdef{C\mathcal I}.
\end{split}
\end{equation}
On the other hand, since $p_+\leq2$ we have using \eqref{A00}:
$$\int_\Omega (|\nabla u|^{p(x)-2}\nabla u-|\nabla v|^{p(x)-2}\nabla v).\nabla(u-v)dx\geq C\int_\Omega
\frac{|\nabla(u-v)|^{2}}{(|\nabla u|+|\nabla v|)^{2-p(x)}}dx.$$
In the case where $\mathcal I<1$, plugging the last inequality \eqref{eqA2} and \eqref{b} we
obtain \eqref{eqA1} with $\gamma=\frac{2}{p_-}$. In the other case: $\mathcal I\geq 1$ then by \eqref{a}, we get inequality \eqref{eqA1} with
$\gamma=\frac{2}{p_+}$.
\end{proof}
\begin{rem}\label{rA1} In the case $p_-\geq 2$, using inequalities \eqref{A00} , on can be easily prove that that there exists
$\tilde C>0$ such that:
$$\int_\Omega (|\nabla u|^{p(x)-2}\nabla u-|\nabla v|^{p(x)-2}\nabla v).\nabla(u-v)dx\geq \tilde C\int_\Om
|\nabla(u-v)|^{p(x)}dx.$$
\end{rem}
\noindent We have the following comparison principle:
\begin{lem}\label{wc}
Let $u,v\in\X$ such that $-\Delta _{p(x)}u  \geq -\Delta _{p(x)}v$ in $\Om$ in the sense that
$$ \forall \varphi\in \X,\ \varphi\geq0,\quad \int_{\Omega}(|\nabla u|^{p(x)-2}\nabla u-|\nabla v|^{p(x)-2}\nabla v)\cdot\nabla\varphi\, dx\geq 0.$$
Then $u\geq v$ {\it a.e.} in $\Om$.
\end{lem}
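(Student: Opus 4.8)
The plan is the classical one for monotone operators: test the weak inequality against the positive part of $v-u$ and then exploit the strict monotonicity of the field $\xi\mapsto|\xi|^{p(x)-2}\xi$ contained in \eqref{A00}, closing the argument with the Poincar\'e inequality on $\X$.

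First I would check that $w\eqdef(v-u)^+$ is an admissible test function, i.e. $w\in\X$ and $w\geq0$. Since $p\in\mathcal P^{\log}(\Om)\cap L^\infty(\Om)$, the space $\X=\WW$ is stable under truncation: for $z\in\X$ one has $z^+\in\X$ with $\nabla z^+=\mathbf 1_{\{z>0\}}\nabla z$ a.e.\ (approximate $z$ by $C^\infty_0(\Om)$ functions and use the continuity of $z\mapsto z^+$ in $\|\cdot\|_{W^{1,p(x)}}$). Applying the hypothesis with $\varphi=w$ and using $\nabla w=\mathbf 1_{\{v>u\}}(\nabla v-\nabla u)$, one obtains
\begin{equation*}
\int_{\{v>u\}}\bigl(|\nabla v|^{p(x)-2}\nabla v-|\nabla u|^{p(x)-2}\nabla u\bigr)\cdot(\nabla v-\nabla u)\,dx\leq 0 .
\end{equation*}

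Now, for a.e.\ $x\in\Om$ the exponent $p(x)$ is a fixed number in $(1,\infty)$, so inequality \eqref{A00} (applied with $p=p(x)$) shows that the integrand above is nonnegative, and strictly positive wherever $\nabla v(x)\neq\nabla u(x)$. Hence the integral being nonpositive forces $\nabla v=\nabla u$ a.e.\ on $\{v>u\}$, that is $\nabla w=0$ a.e.\ in $\Om$. Since $w\in\X$ and $\Om$ is bounded, $\|\cdot\|_\X=\|\nabla\cdot\|_{L^{p(x)}(\Om)}$ is a norm on $\X$ by the Poincar\'e inequality recalled in Section~1, so $w=0$, i.e.\ $u\geq v$ a.e.\ in $\Om$. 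The only genuinely delicate point is the admissibility of $(v-u)^+$ in $\X$ together with the chain rule $\nabla(v-u)^+=\mathbf 1_{\{v>u\}}(\nabla v-\nabla u)$; once that is granted, the conclusion is an immediate consequence of the pointwise monotonicity inequality \eqref{A00} and of the Poincar\'e inequality.
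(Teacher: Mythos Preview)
Your proof is correct and follows essentially the same route as the paper: test with $(v-u)^+\in\X$ and use the monotonicity of $\xi\mapsto|\xi|^{p(x)-2}\xi$ to force $\nabla(v-u)^+=0$. The only cosmetic difference is that the paper packages the monotonicity through Lemma~\ref{A3} and Remark~\ref{rA1} to obtain $\int_{\{v>u\}}|\nabla\varphi|^{p(x)}dx\leq 0$, whereas you invoke the pointwise inequality \eqref{A00} directly; since Lemma~\ref{A3} and Remark~\ref{rA1} are themselves consequences of \eqref{A00}, your argument is in fact slightly more direct.
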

\begin{proof}
Set $\varphi=\max(v-u,0)\in \X$. Then
\begin{equation*}
\int_{\{v> u\}}(|\nabla u|^{p(x)-2}\nabla u-|\nabla v|^{p(x)-2}\nabla v)\cdot\nabla\varphi\, dx\geq 0
\end{equation*}
By Lemma \ref{A3} and Remark \ref{rA1} we deduce that
$$\int_{\{v> u\}}|\nabla \varphi|^{p(x)}dx\leq 0.$$
Hence $\varphi=0$ a.e. in $\Om$. Therefore $u\geq v$ a.e. in $\Om$.
\end{proof}
\section{Convergence tools}
\begin{lem}\label{A4}
Let $(u_n)$ be a bounded sequence in $L^\infty(0,T,\X)$ uniformly in $n$. Let $u\in L^\infty(0,T,\X)$ such that
\[\lim_{n\rightarrow +\infty} \ii (|\nabla u_n|^{p(x)-2}\nabla u_n-|\nabla u|^{p(x)-2}\nabla u).\nabla
(u_n-u)dxdt=0.\]
Then, $\nabla u_n$ converges to $\nabla u$ in $L^{p(x)}(Q_T)$.
\end{lem}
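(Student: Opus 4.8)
The plan is to split the time-space cylinder $Q_T$ according to the sign of $p(x)-2$, exactly as in the argument around \eqref{srhs}, and to apply the two pointwise monotonicity inequalities \eqref{A00} on each piece. Write $Q_{T,2}=(0,T)\times\Omega_2$ with $\Omega_2=\{x\in\Omega\mid p(x)>2\}$, and let $Q_T^-=Q_T\setminus Q_{T,2}$. On $Q_{T,2}$ the second inequality in \eqref{A00} (the case $p\ge 2$) gives directly
\begin{equation*}
\int_{Q_{T,2}}|\nabla(u_n-u)|^{p(x)}\,dxdt\le \tilde c^{-1}\int_{Q_{T,2}}(|\nabla u_n|^{p(x)-2}\nabla u_n-|\nabla u|^{p(x)-2}\nabla u)\cdot\nabla(u_n-u)\,dxdt,
\end{equation*}
and since the integrand of the right-hand side is nonnegative everywhere (monotonicity of the operator), this last integral is bounded by the full integral over $Q_T$, which tends to $0$ by hypothesis. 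Hence $\nabla(u_n-u)\to 0$ in $L^{p(x)}(Q_{T,2})$, i.e. $\rho_p$ of it restricted to $Q_{T,2}$ goes to $0$, and by \eqref{b} the corresponding Luxemburg norm goes to $0$.

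On $Q_T^-$, where $p(x)\le 2$, the relevant inequality is the second case of \eqref{A00}, which only controls $|\nabla(u_n-u)|^2/(|\nabla u_n|+|\nabla u|)^{2-p(x)}$; recovering $|\nabla(u_n-u)|^{p(x)}$ from this requires a Hölder interpolation. I would mimic the computation of Lemma \ref{A3}: write, using Hölder inequality \eqref{hi} with exponents $2/p(x)$ and $2/(2-p(x))$,
\begin{equation*}
\int_{Q_T^-}|\nabla(u_n-u)|^{p(x)}\,dxdt\le C\left\|\frac{|\nabla(u_n-u)|^{p(x)}}{(|\nabla u_n|+|\nabla u|)^{\alpha(x)}}\right\|_{L^{2/p(x)}(Q_T^-)}\bigl\|(|\nabla u_n|+|\nabla u|)^{\alpha(x)}\bigr\|_{L^{2/(2-p(x))}(Q_T^-)},
\end{equation*}
with $\alpha(x)=p(x)(2-p(x))/2$. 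The second factor is bounded uniformly in $n$ because $(u_n)$ is bounded in $L^\infty(0,T;\X)$ and $\alpha(x)\cdot 2/(2-p(x))=p(x)$, so this norm is controlled by a power of $\||\nabla u_n|+|\nabla u|\|_{L^{p(x)}}$ (invoke Lemma \ref{A6} to pass between $L^{p(x)}$ and that weighted space, together with \eqref{a}--\eqref{b}). The first factor, by Proposition \ref{P1} and \eqref{a}--\eqref{b}, is controlled by a power of $\rho$ evaluated at $|\nabla(u_n-u)|^{2}/(|\nabla u_n|+|\nabla u|)^{2-p(x)}$ over $Q_T^-$, which by \eqref{A00} is in turn dominated by the hypothesis integral, hence $\to 0$. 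Combining the two factors, $\int_{Q_T^-}|\nabla(u_n-u)|^{p(x)}\to 0$.

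Adding the two contributions gives $\rho_p(\nabla(u_n-u))\to 0$ over all of $Q_T$, and then Proposition \ref{P1}(ii) (modular convergence is equivalent to norm convergence) yields $\nabla u_n\to\nabla u$ in $L^{p(x)}(Q_T)$, which is the claim. The main obstacle is the bookkeeping on $Q_T^-$: one must be careful that all the exponents in the Hölder step are admissible ($p(x)q(x)\ge 1$ in Lemma \ref{A6}, and $2/(2-p(x))$ finite requires $p_+<2$ on that region, which holds by construction since there $p(x)\le 2$ and strict inequality can be arranged by shrinking $\Omega_2$ to $\{p(x)\ge 2\}$), and that the powers $\gamma\in\{2/p_+,2/p_-\}$ appearing from \eqref{a}--\eqref{b} are handled by splitting into the cases where the relevant norms are $\le 1$ or $>1$. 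None of this is deep, but it is where the estimate could go wrong if the exponent ranges are not tracked.
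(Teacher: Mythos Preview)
Your proposal is correct and follows essentially the same strategy as the paper: split according to $\{p(x)>2\}$ and $\{p(x)\leq 2\}$, use the inequalities \eqref{A00} on each piece, and on the singular piece interpolate via H\"older with exponents $2/p(x)$ and $2/(2-p(x))$, controlling the weight factor through Lemma~\ref{A6} and the uniform bound in $L^\infty(0,T;\X)$. The only cosmetic difference is that the paper packages the H\"older step into Lemma~\ref{A3} applied for each fixed $t$ and then integrates in time (using that $t\mapsto\|(|\nabla u_n|+|\nabla u|)^{\alpha(x)}\|_{L^{2/(2-p(x))}}$ is bounded on $[0,T]$), whereas you apply H\"older directly on the whole cylinder $Q_T^-$; both routes are equivalent and conclude via Proposition~\ref{P1}(ii).
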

\begin{proof}
Define $\nabla X=|\nabla u_n|^{p(x)-2}\nabla u_n-|\nabla u|^{p(x)-2}\nabla u$. Thus we have,
\begin{equation*}
\begin{split}
-\int_0^T\langle\Delta_{p(x)}u_n-\Delta_{p(x)}u,u_n-u\rangle dt&=\int_0^T\!\!\!\int_{\Omega\backslash \Omega_2} \nabla
X.\nabla(u_n-u)dxdt\\
&+\int_0^T\!\!\!\int_{\Omega_2} \nabla X.\nabla(u_n-u)dxdt
\end{split}
\end{equation*}
where $\Omega_2=\{p(x)>2\}$. By Lemma \ref{A3}, with $\alpha(x)=\frac{p(x)(2-p(x))}{2}$, and Remark \ref{rA1}, we get as $n\rightarrow +\infty$
$$\int_0^T\!\!\!\int_{\Om_2} |\nabla(u_n-u)|^{p(x)}dxdt \rightarrow 0$$
and
$$\int_0^T\frac{\int_{\Om\backslash \Om_2}|\nabla(u_n-u)|^{p(x)}dx}{\|(|\nabla u_n|+|\nabla
u|)^{\alpha(x)}\|_{L^{\frac{2}{2-p(x)}}(\Om\backslash \Om_2)}}dt\rightarrow 0.$$
Applying Lemma \ref{A6}, we prove that the mapping $$t\rightarrow \|(|\nabla u_n|+|\nabla
u|)^{\alpha(x)}\|_{L^{\frac{2}{2-p(x)}}(\Om\backslash \Om_2)}$$ is bounded on $[0,T]$.
Hence we obtain
$$\int_0^T\!\!\!\int_{\Om\backslash \Om_2} |\nabla(u_n-u)|^{p(x)}dxdt \rightarrow 0.$$
We conclude by Proposition \ref{P1} {\it(\,ii\,)}  that $\nabla u_n$ converges to $\nabla u$ in $L^{p(x)}(Q_T)$.
\end{proof}
\noindent Finally we recall the following Corollary A.3 in \cite{LU}.
\begin{lem}\label{A5}
Let $\Om$ be a smooth bounded domain. Consider $(u_n)$, $u\in L^{p(x)}(\Om)$ such that $u_n$ converges to $u$ weakly in $L^{p(x)}(\Om)$. Then, $\rho_p(u_n)$ converges to $\rho_p(u)$
implies that $u_n$ converges to $u$ in $L^{p(x)}(\Om)$.
\end{lem}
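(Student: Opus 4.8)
The plan is to prove the stronger statement $\rho_p(u_n-u)\to 0$, from which the conclusion follows at once by Proposition~\ref{P1}~\emph{(ii)}; one may assume $u\neq 0$, the case $u=0$ being just the hypothesis combined with Proposition~\ref{P1}~\emph{(ii)}. \emph{First step (convergence of the midpoint modular).} Since $1<p_-\leq p_+<\infty$ the space $L^{p(x)}(\Om)$ is reflexive, so $(u_n)$ is bounded and $\tfrac{u_n+u}{2}\rightharpoonup u$. As $v\mapsto\rho_p(v)$ is convex and weakly sequentially lower semicontinuous (Theorem 3.2.9 in \cite{DHHR}), $\rho_p(u)\leq\liminf_n\rho_p\big(\tfrac{u_n+u}{2}\big)$, while pointwise convexity of $t\mapsto|t|^{p(x)}$ together with the hypothesis gives $\rho_p\big(\tfrac{u_n+u}{2}\big)\leq\tfrac12\rho_p(u_n)+\tfrac12\rho_p(u)\to\rho_p(u)$. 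Hence $\rho_p\big(\tfrac{u_n+u}{2}\big)\to\rho_p(u)$, i.e.
\[\delta_n\eqdef\int_\Om\Big(\frac{|u_n|^{p(x)}+|u|^{p(x)}}{2}-\frac{|u_n+u|^{p(x)}}{2^{p(x)}}\Big)\,dx\ \longrightarrow\ 0,\]
and the integrand above is nonnegative.

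\emph{Second step (uniform convexity of $t\mapsto|t|^p$).} Next I would invoke the classical convexity inequalities for $t\mapsto|t|^p$, with constants depending only on $p_-$ and $p_+$ (obtained for instance by integrating \eqref{A00} along the segment joining $a$ and $b$):
\[\frac{|a|^p+|b|^p}{2}-\frac{|a+b|^p}{2^p}\ \geq\ \begin{cases}c\,|a-b|^p,&\text{if }p\geq 2,\\[1mm] c\,\frac{|a-b|^2}{(|a|+|b|)^{2-p}},&\text{if }1<p\leq 2.\end{cases}\]
Applying this pointwise with $p=p(x)$, separately on $\Om_2=\{x:p(x)>2\}$ and on $\Om\setminus\Om_2$, integrating, and using that the right-hand sides are dominated by $\delta_n$ (the integrand of $\delta_n$ being nonnegative), we get
\[\int_{\Om_2}|u_n-u|^{p(x)}\,dx\ \longrightarrow\ 0\qquad\text{and}\qquad\int_{\Om\setminus\Om_2}\frac{|u_n-u|^2}{(|u_n|+|u|)^{2-p(x)}}\,dx\ \longrightarrow\ 0.\]

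\emph{Third step (recovering the $p(x)$-modular on $\Om\setminus\Om_2$).} Here I would repeat the H\"older bookkeeping of the proof of Lemma~\ref{A3}: with $\alpha(x)=\tfrac{p(x)(2-p(x))}{2}$, write
\[|u_n-u|^{p(x)}=\frac{|u_n-u|^{p(x)}}{(|u_n|+|u|)^{\alpha(x)}}\,\big(|u_n|+|u|\big)^{\alpha(x)}\]
and apply \eqref{hi} on $\Om\setminus\Om_2$ with the conjugate exponents $\tfrac{2}{p(x)}$ and $\tfrac{2}{2-p(x)}$. The factor $\big\|(|u_n|+|u|)^{\alpha(x)}\big\|_{L^{2/(2-p(x))}(\Om\setminus\Om_2)}$ stays bounded by Lemma~\ref{A6} and the boundedness of $(u_n)$ in $L^{p(x)}(\Om)$, whereas the other factor has $\tfrac{2}{p(x)}$-modular equal to $\int_{\Om\setminus\Om_2}\tfrac{|u_n-u|^2}{(|u_n|+|u|)^{2-p(x)}}\,dx\to 0$, hence tends to $0$ by Proposition~\ref{P1}~\emph{(ii)}. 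Thus $\int_{\Om\setminus\Om_2}|u_n-u|^{p(x)}\,dx\to 0$, and together with the first limit of the second step, $\rho_p(u_n-u)\to 0$, which finishes the proof.

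I expect the main obstacle to be the second step: producing the convexity inequality with a constant uniform over the range $p_-\leq p(x)\leq p_+$, and in particular handling the subquadratic regime, where the natural lower bound for the convexity gap is the weighted quantity $\frac{|a-b|^2}{(|a|+|b|)^{2-p}}$ instead of a clean power of $|a-b|$; the third step is exactly what turns that weighted control back into control of the $p(x)$-modular. (Alternatively, the first two steps can be merged by appealing directly to the uniform convexity of the semimodular $\rho_p$ proved in \cite{DHHR}, which yields $\rho_p\big(\tfrac{u_n-u}{2}\big)\to 0$ straight from $\delta_n\to 0$.)
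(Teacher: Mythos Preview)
The paper does not actually prove this lemma: it is merely recalled as ``Corollary~A.3 in \cite{LU}'' without argument, so there is nothing to compare your proof against. Your argument is correct and self-contained. The strategy---first forcing $\rho_p\big(\tfrac{u_n+u}{2}\big)\to\rho_p(u)$ via weak lower semicontinuity and convexity, then using a pointwise uniform-convexity inequality for $t\mapsto|t|^{p(x)}$ to convert the vanishing convexity gap into $\rho_p(u_n-u)\to 0$---is the standard Radon--Riesz proof transported to the modular setting. Your proposed derivation of the Step~2 inequality by integrating \eqref{A00} along a segment does work: with $m=\tfrac{a+b}{2}$ and $\phi(t)=|t|^p$ one has $|a|^p+|b|^p-2|m|^p=\int_0^1\big[\phi'(m+s\tfrac{a-b}{2})-\phi'(m-s\tfrac{a-b}{2})\big]\tfrac{a-b}{2}\,ds$, the two arguments $m\pm s\tfrac{a-b}{2}$ are convex combinations of $a$ and $b$ so their moduli sum to at most $|a|+|b|$, and \eqref{A00} then gives the claimed lower bound with a constant uniform on $[p_-,p_+]$. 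Step~3 is precisely the H\"older device of Lemma~\ref{A3}, and the boundedness of $(u_n)$ in $L^{p(x)}(\Om)$ needed there is automatic from weak convergence. Nothing is missing.
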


\section{Regularity result}\label{ApC}
\noindent We begin by recalling the regularity result due to Fan and Zhao \cite{FZ}:
\begin{prop}[Theorem 4.1 in \cite{FZ}]\label{reg0}
Let $p\in C(\overline\Omega)$ and $u\in \X$ satisfying
\begin{equation*}
\int_\Omega |\nabla u|^{p(x)-2}\nabla u .\nabla \Psi dx=\int_\Omega f(x,u) \Psi dx, \quad \forall \Psi \in \X,
\end{equation*}
where $f$ satisfies for all $(x,t)\in \Omega\times\R$, $|f(x,t)|\leq c_1+c_2|t|^{r(x)-1}$ with $r\in C(\overline\Om)$ and $\forall x\in\overline \Om$, $1<r(x)<p^*(x)$. Then $u\in L^\infty(\Omega)$.
\end{prop}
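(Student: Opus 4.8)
The statement is the standard $L^\infty$-regularity for subcritical quasilinear equations, and the plan is to prove it by a De~Giorgi--Stampacchia truncation iteration, transplanted to the variable-exponent setting by systematically passing between the semimodular $\rho_p$ and the Luxemburg norm through \eqref{a}--\eqref{b}. It suffices to bound $u$ from above, since $w=-u$ satisfies a weak equation of the same type with $f(x,t)$ replaced by $\tilde f(x,t)=-f(x,-t)$, which obeys the same growth condition $|\tilde f(x,t)|\leq c_1+c_2|t|^{r(x)-1}$; the lower bound on $u$ then follows verbatim.

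First I would fix, for $k\geq 0$, the super-level set $A_k=\{x\in\Omega:\ u(x)>k\}$ and test the weak formulation with $\Psi=(u-k)^+\in\X$. Since $\nabla(u-k)^+=\nabla u\,\chi_{A_k}$, this yields the Caccioppoli-type identity
\begin{equation*}
\int_{A_k}|\nabla u|^{p(x)}\,dx=\int_{A_k}f(x,u)(u-k)^+\,dx\leq\int_{A_k}\bigl(c_1+c_2|u|^{r(x)-1}\bigr)(u-k)^+\,dx.
\end{equation*}
Because $p,r\in C(\overline{\Omega})$ and $r(x)<p^*(x)$ on the compact set $\overline{\Omega}$, there is $\varepsilon_0>0$ with $r(x)\leq p^*(x)-\varepsilon_0$ for every $x$; one then fixes an exponent $s\in L^\infty(\Omega)$ with $r(x)\leq s(x)\leq p^*(x)-\tfrac{\varepsilon_0}{2}$, so that $\X\hookrightarrow L^{s(x)}(\Omega)$ by Theorem~\ref{SE1}. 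Estimating the right-hand side with the generalized H\"older inequality \eqref{hi} and Lemma~\ref{A6}, and using the strict subcriticality to split off a positive power of the measure $|A_k|$ (which tends to $0$ as $k\to\infty$), one obtains, for $k$ large enough that $\|(u-k)^+\|_\X\leq 1$ so that \eqref{b} applies to $\nabla(u-k)^+$,
\begin{equation*}
\|(u-k)^+\|_{\X}^{\,p_+}\leq\int_{A_k}|\nabla u|^{p(x)}\,dx\leq C\,|A_k|^{\delta}\,\|(u-k)^+\|_{\X}
\end{equation*}
for some $\delta>0$ and some $C$ depending only on $\|u\|_\X$, $c_1$, $c_2$, $\Omega$, $p$, $r$; hence $\|(u-k)^+\|_\X^{\,p_+-1}\leq C|A_k|^{\delta}$.

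Next, for $h>k$ one has $(u-k)^+\geq h-k$ on $A_h$, so Theorem~\ref{SE1} together with \eqref{b} gives $(h-k)\,|A_h|^{1/s_-}\leq\|(u-k)^+\|_{L^{s(x)}(A_h)}\leq C\|(u-k)^+\|_\X$. Combining this with the previous bound produces a recursive inequality of the form
\begin{equation*}
|A_h|\leq\frac{C}{(h-k)^{\sigma}}\,|A_k|^{\,\mu},\qquad h>k\geq k_0,\ \sigma>0.
\end{equation*}
Provided $\mu>1$, the classical De~Giorgi iteration lemma applied to the nonincreasing function $k\mapsto|A_k|$ (take $k_j=k_0+D(1-2^{-j})$ and check that $|A_{k_j}|$ decays geometrically) forces $|A_{k_0+D}|=0$ for an explicit $D$, i.e. $u\leq k_0+D$ a.e. in $\Omega$, which finishes the proof.

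The genuine obstacle is ensuring that the exponent $\mu$ in the recursion is strictly greater than $1$: the crude estimate above yields $\mu=\delta s_-/(p_+-1)$, which need not exceed $1$ for all admissible $p$. The remedy — and the only delicate part of the argument — is to run the estimate more sharply: retain the gain carried by the factor $(u-k)^+$ in the Caccioppoli identity, interpolate $\|(u-k)^+\|_{L^{s(x)}(A_k)}$ between $\|(u-k)^+\|_{L^{p^*(x)}(A_k)}$ and $|A_k|$, and, if needed, iterate the resulting improvement of integrability finitely many times in a Moser-type bootstrap; the \emph{uniform} subcritical gap $\varepsilon_0$ is exactly what guarantees that this procedure terminates with an exponent $>1$. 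The remaining variable-exponent bookkeeping — each conversion between $\rho_p$ and $\|\cdot\|$ reversing direction according to whether the relevant norm is $\leq1$ or $\geq1$ — is routine but must be done with care, and is handled cleanly by restricting once and for all to levels $k\geq k_0$ with $k_0$ so large (depending only on $\|u\|_\X$ through the embedding constant) that every norm encountered is $\leq1$.
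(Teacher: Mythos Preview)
The paper does not give its own proof of this proposition; it is quoted verbatim from Fan--Zhao \cite{FZ}. The closest thing in the paper is the proof of Proposition~\ref{rg1}, which follows the Fan--Zhao strategy: a \emph{local} De~Giorgi argument in small balls $B_R$ chosen so that the local oscillation of $p$ satisfies $p^+_{B_R}<(p^-_{B_R})^*$, testing with $\varphi^{p^+}(u-k)^+$ for a cutoff $\varphi$, and feeding the resulting local Caccioppoli inequality into the Fusco--Sbordone iteration of Lemma~\ref{fsb1}. Boundedness on $\overline\Omega$ is then obtained by compactness.

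Your route is genuinely different: a \emph{global} Stampacchia iteration, testing with $(u-k)^+$ on all of $\Omega$ and deriving a recursion for $|A_k|$. You correctly identify the obstruction --- the recursion exponent $\mu=\delta s_-/(p_+-1)$ need not exceed $1$ --- but the proposed remedy is where the gap lies. A sharper interpolation or a Moser bootstrap does not, by itself, close the gap: each step still passes through \eqref{a}--\eqref{b} and therefore loses the same factor $p_+/p_-$ coming from the \emph{global} spread of $p$, while the subcritical margin $\varepsilon_0$ controls only the gap between $r$ and $p^*$, not between $p_-$ and $p_+$. In the variable-exponent literature this is exactly why one localizes: continuity of $p$ gives $p^+_{B_R}-p^-_{B_R}\to 0$ as $R\to 0$, so on a sufficiently small ball the exponents essentially coincide and the iteration closes with exponent $>1$. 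Without inserting that localization step (or an equivalent device that neutralizes the global oscillation of $p$), your argument remains incomplete.
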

\noindent For $f(x,.)=f(x)$, we have the following proposition.
\begin{prop}\label{rg1}
Let $p\in C(\bar\Omega)$ with $p^-<d$ and $u\in \X$ satisfying
\begin{equation}\label{ws}
\int_\Omega |\nabla u|^{p(x)-2}\nabla u .\nabla \Psi dx=\int_\Omega f \Psi dx, \quad \forall \Psi \in \X,
\end{equation}
where $f\in L^{q}(\Omega)$, $q>\frac{d}{p_-}$. Then $u\in L^\infty(\Omega)$.
\end{prop}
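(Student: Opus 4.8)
The plan is to prove Proposition~\ref{rg1} by a De~Giorgi--Stampacchia type iteration on the truncated level sets of $u$. The key observation is that for $f$ depending only on $x$ (and lying in $L^q(\Omega)$ with $q>d/p_-$), the right-hand side in \eqref{ws} is ``subcritical'' in a strong sense, so we may test the weak formulation with the truncations $(u-k)^+$ and $(u+k)^-$ for $k\geq k_0\geq 1$ and derive a recursive estimate on the quantities
$$
a(k)\eqdef \int_{A_k}\! dx,\qquad A_k\eqdef\{x\in\Omega:\ u(x)>k\},
$$
that forces $a(k)=0$ for $k$ large. First I would treat the case $u>k$ (the case $u<-k$ being symmetric after replacing $u$ by $-u$). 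Plugging $\Psi=(u-k)^+\in\X$ into \eqref{ws} gives
$$
\int_{A_k}|\nabla u|^{p(x)}\,dx=\int_{A_k} f\,(u-k)\,dx.
$$

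Next I would bound the left-hand side below using the modular inequalities \eqref{a}--\eqref{b}: on $A_k$ we have $\nabla (u-k)^+=\nabla u$, so $\int_{A_k}|\nabla u|^{p(x)}dx=\rho_p(\nabla(u-k)^+)$, and hence $\|(u-k)^+\|_{\X}^{p_-}\leq \rho_p(\nabla(u-k)^+)+1$ by \eqref{a}--\eqref{b}. For the right-hand side I would use the generalized H\"older inequality \eqref{hi} together with the Sobolev embedding $\X\hookrightarrow L^{p^*(x)}(\Omega)$ from Theorem~\ref{SE1}: writing $f(u-k)$ on $A_k$ and using that $q>d/p_-$ ensures the conjugate exponent of $q$ is dominated by $p^*(x)/(p^*(x)-1)$ times a gain, we obtain
$$
\int_{A_k} f(u-k)\,dx\leq C\|f\|_{L^q(\Omega)}\,\|(u-k)^+\|_{L^{p^*(x)}(\Omega)}\,a(k)^{\delta}
$$
for some $\delta>0$ coming from the room between $q$ and $d/p_-$ (this $\delta$ is exactly where the hypothesis $q>d/p_-$ is consumed). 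Combining and using $\|(u-k)^+\|_{L^{p^*(x)}}\leq C\|(u-k)^+\|_{\X}$ yields a bound of the form $\|(u-k)^+\|_{\X}^{p_--1}\leq C a(k)^{\delta}$ when $\|(u-k)^+\|_{\X}\geq 1$ (and an even easier bound otherwise), i.e. $\|(u-k)^+\|_{\X}\leq C a(k)^{\delta/(p_--1)}$ up to an additive constant.

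Then for $h>k\geq k_0$ I would use $\|(u-k)^+\|_{L^{p^*(x)}}\geq \|(h-k)\mathbf 1_{A_h}\|_{L^{p^*(x)}}\geq (h-k)\,a(h)^{1/p^*_+}$ (choosing $k_0\geq1$ so all relevant norms exceed $1$ and \eqref{b} applies cleanly), which combined with the previous step gives the De~Giorgi recursion
$$
a(h)\leq \frac{C\,a(k)^{1+\sigma}}{(h-k)^{p^*_+ p_-/(p_--1)}},\qquad h>k\geq k_0,
$$
for some $\sigma>0$. A standard lemma on such recursions (Stampacchia) then implies $a(k_0+\ell)=0$ for a suitable finite $\ell$, hence $u\leq k_0+\ell$ a.e.; the symmetric argument gives a lower bound, so $u\in L^\infty(\Omega)$. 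The main obstacle is bookkeeping the variable-exponent H\"older and embedding estimates so that the exponents $\delta$ and $\sigma$ come out strictly positive: one must carefully exploit $q>d/p_-$ against the worst-case embedding exponent $p^*_-=d p_-/(d-p_-)$, keeping track of whether each Luxemburg norm is above or below $1$ so that the correct branch of \eqref{a}--\eqref{b} is used throughout. Alternatively, since the hypotheses of Proposition~\ref{reg0} are met by the frozen right-hand side (take $r$ with $f(x)$ viewed as $c_1+c_2|t|^{r(x)-1}$ vacuously, after absorbing $f\in L^q$ via a standard approximation), one could also deduce the result more quickly by citing \cite{FZ}; but the self-contained iteration above is the approach I would carry out in detail.
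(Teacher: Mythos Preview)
Your strategy—a \emph{global} Stampacchia iteration with test function $(u-k)^+$—is genuinely different from the paper's proof, which is a \emph{local} De~Giorgi scheme: the paper localises to a small ball $K_R$ on which the continuity of $p$ forces the local oscillation to satisfy $p^+<(p^-)^*$, derives a Caccioppoli estimate using the cutoff test function $\varphi^{p^+}(u-k)^+$, and feeds the result into a Fusco--Sbordone type iteration lemma (Lemma~\ref{fsb1}). That localisation is precisely what allows the paper to avoid the mismatch between $p_-$ and $p_+$ that your global argument runs into.

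The gap in your write-up is in the modular step. For large $k$ one has $(u-k)^+\to 0$ in $\X$, so you are necessarily in the regime $\|(u-k)^+\|_{\X}\leq 1$, where \eqref{b} gives $\rho_p(\nabla(u-k)^+)\geq \|(u-k)^+\|_{\X}^{p_+}$, not the exponent $p_-$. The resulting recursion then needs $p^*_-\,\delta/(p_+-1)>1$, and this is \emph{not} implied by $q>d/p_-$ when $p_+>p_-$. Your remark ``choosing $k_0\geq 1$ so all relevant norms exceed $1$'' cannot be arranged (the norms decrease with $k$), and the ``$+1$'' you insert into the modular inequality only yields $\|(u-k)^+\|_{\X}\leq C$, with no decay in $a(k)$. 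The fix is simple and keeps your global approach: use the pointwise bound $|\nabla u|^{p_-}\leq |\nabla u|^{p(x)}+1$ to get
\[
\int_\Omega|\nabla(u-k)^+|^{p_-}\,dx\leq \int_{A_k}f\,(u-k)\,dx+a(k),
\]
then run the classical constant-exponent Stampacchia iteration via $W^{1,p_-}_0\hookrightarrow L^{p^*_-}$; the extra $a(k)$ term is harmless since $p^*_-/p_->1$, and the term coming from $f$ closes exactly under $q>d/p_-$. Finally, your proposed shortcut via Proposition~\ref{reg0} does not apply: a general $f\in L^q(\Omega)$ need not satisfy $|f(x)|\leq c_1+c_2|t|^{r(x)-1}$ with a finite constant $c_1$, which is precisely why the paper proves Proposition~\ref{rg1} separately.
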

\noi To prove Proposition \ref{rg1}, we need a regularity lemma.  
\begin{lem}\label{fsb1}
Let $u\in W_0^{1,p}(\Omega)$, $1<p<d$, satisfying for any  $B_R$, $R<R_0$, and for all $\sigma\in (0,1)$, and
any $k\geq k_0>0$
\begin{equation*}\label{equa}
\begin{split}
\int_{A_{k,\sigma R}} |\nabla u|^p dx &\leq C\left[ \int_{A_{k, R}}
\left|\frac{u-k}{R(1-\sigma)}\right|^{p^*}dx+k^\alpha|A_{k,R}|+ |A_{k,R}|^{\frac{p}{p^*}+\varepsilon}\right.\\
&\left.+\left(\int_{A_{k, R}} \left|\frac{u-k}{R(1-\sigma)}\right|^{p^*}dx\right)^{\frac{p}{p^*}}|A_{k,R}|^\delta\right]
\end{split}
\end{equation*}
where $A_{k,R}=\{x\in B_R\cap \Omega\ |\ u(x)>k\}$, $0<\alpha<p^*=\frac{dp}{d-p}$ and $\varepsilon,\,\delta>0$. Then $u\in L^\infty(\Omega)$.
\end{lem}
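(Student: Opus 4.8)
The plan is to run a De Giorgi iteration. The hypothesis is exactly the energy inequality one obtains when testing the equation with a truncation of $u$, so the goal is to convert it into a closed recursive inequality for the super-level set measures and then invoke the standard fast-geometric-convergence lemma. First I would fix a ball $B_{R_0}\cap\Omega$ and, for levels $k_0\le h<k$ and radii $\rho<r\le R_0$, use the Sobolev inequality on $W^{1,p}_0$ together with the truncation $(u-k)^+$ supported in $A_{k,r}$ to bound $\|(u-k)^+\|_{L^{p^*}(A_{k,\rho})}$ in terms of $\int_{A_{k,\rho r}}|\nabla u|^p$. Then I would feed the assumed inequality into this, choosing $\sigma=\rho/r$, so that the left-hand side $\int_{A_{k,\sigma R}}|\nabla u|^p$ controls $\left(\int_{A_{k,\rho}}|(u-k)^+|^{p^*}\right)^{p/p^*}$ up to the four terms on the right.

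Next I would set $\psi(k,r)=\int_{A_{k,r}}|u-k|^{p^*}\,dx$ and reduce all the right-hand-side terms to powers of $\psi(h,r)$ and $|A_{k,r}|$ for a comparison level $h<k$. The key elementary facts are the Chebyshev-type bounds $|A_{k,r}|\le (k-h)^{-p^*}\psi(h,r)$ and $\psi(k,r)\le\psi(h,r)$ for $k>h$, which turn $k^\alpha|A_{k,R}|$, $|A_{k,R}|^{p/p^*+\varepsilon}$, and the mixed term into terms of the form $C\,(k-h)^{-\theta}\,\psi(h,r)^{1+\nu}$ with $\nu>0$; here the condition $\alpha<p^*$ (used as $k^\alpha\le C\,k^{p^*}\cdot k^{\alpha-p^*}$, i.e. $k^\alpha|A_{k,R}|\lesssim (k-h)^{-(p^*-\alpha)}\psi(h,r)\cdot(\text{lower order})$, combined with $|A_{k,R}|\le (k-h)^{-p^*}\psi(h,r)$) and $\varepsilon,\delta>0$ are exactly what guarantee the exponent of $\psi$ exceeds $1$. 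Combining with the Sobolev step, I obtain an inequality of the shape
\begin{equation*}
\psi(k,\rho)\le \frac{C}{(k-h)^{\theta}(r-\rho)^{\lambda}}\,\bigl(\psi(h,r)\bigr)^{1+\nu},\qquad \nu>0,
\end{equation*}
valid for all $k>h\ge k_0$ and $\rho<r\le R_0$, where I should take care to handle the several right-hand terms by using the worst (smallest) exponent on $\psi$, which is still strictly greater than $1$ once the measure terms are absorbed.

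Finally I would apply the classical iteration lemma (e.g. Lemma 4.1 in Chapter II of Ladyzhenskaya–Uraltseva, or Giusti): with $k_m=k_0+M(1-2^{-m})$ and $r_m=R_0(1+2^{-m})/2$, the sequence $\psi_m=\psi(k_m,r_m)$ satisfies $\psi_{m+1}\le C\,b^{m}\,\psi_m^{1+\nu}$ for suitable constants, hence $\psi_m\to 0$ provided the starting value $\psi_0=\psi(k_0,R_0)$ is smaller than a threshold depending only on $C,b,\nu$. Since $\psi_0\le \int_\Omega |u|^{p^*}\,dx<\infty$ because $u\in W^{1,p}_0(\Omega)\hookrightarrow L^{p^*}(\Omega)$, this threshold is met after replacing $k_0$ by a large enough value $k_0$ (which only shrinks $\psi_0$). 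Letting $m\to\infty$ gives $\psi(k_0+M,R_0/1)=0$, i.e. $u\le k_0+M$ a.e.\ on $\Omega$; applying the same argument to $-u$ (the hypothesis is symmetric under $u\mapsto -u$ since it only involves $|\nabla u|$ and $|u-k|$ on super-level sets) yields the two-sided bound $u\in L^\infty(\Omega)$. The main obstacle is the bookkeeping in the second step: one must check that every one of the four right-hand terms, after using the Chebyshev and monotonicity estimates, produces a power of $\psi(h,r)$ strictly above $1$ with a negative power of $(k-h)$ that the iteration lemma can absorb — this is where the structural assumptions $\alpha<p^*$ and $\varepsilon,\delta>0$ are essential and must be tracked carefully.
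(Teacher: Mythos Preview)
Your proposal is correct and follows essentially the same De Giorgi iteration as the paper: dyadic levels and radii, Sobolev embedding applied to a cut-off truncation, the Chebyshev bound $|A_{k_{h+1},r_h}|\le (k_{h+1}-k_h)^{-p^*}I_h$, and the fast-geometric-convergence lemma from Ladyzhenskaya--Ural'tseva. Two small points to tighten: the Sobolev step needs an explicit cutoff $\varphi_h$ (your ``$(u-k)^+$ supported in $A_{k,r}$'' is not literally in $W^{1,p}_0(B_r)$), and the iteration only yields $u\le k_0+M$ on $B_{R_0/2}\cap\Omega$, so a covering argument via compactness of $\overline\Omega$ is needed to conclude on all of $\Omega$---the paper does both of these explicitly.
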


\begin{proof}
Fusco and Sbordone have already proved in \cite{FS} the local boundedness of $u$ in the case $u\in W^{1,p}(\Omega)$ and the inequality is satisfied for any $B_R\subset \subset \Omega$. We claim that the result is still valid in our situation. For that we will prove the boundednes of $u$ in a neighborhood of the boundary $\partial\Omega$.\\
Let $x_0\in \partial \Omega$, $B_R$ the ball centred in $x_0$. We define $K_R\eqdef B_R\cap \Omega$ and we set 
$$r_h=\frac{R}{2}+\frac{R}{2^{h+1}}, \quad \tilde r_h=\frac{r_h+r_{h+1}}{2} \ \mbox{ and } \ k_h=k\left(1-\frac{1}{2^{h+1}} \right) \quad \mbox{for any $h\in \N$}.$$
Define also
$$I_h=\int_{A_{k_h,r_h}} |u(x)-k_h|^{p^*}dx \quad \mbox{and} \quad \varphi(t)=\left\{
\begin{array}{l l}
1 & \mbox{if}\ 0\leq t\leq \frac12,\\
0 & \mbox{if}\ t\geq \frac34
\end{array}
\right.
$$
satisfying $\varphi\in C^1([0,+\infty);[0,1])$. We set $\varphi_h(x)=\varphi\left(\frac{2^{h+1}}{R}(|x|-\frac{R}{2})\right)$. Hence $\varphi_h=1$ on $B_{r_{h+1}}$ and $\varphi_h=0$ on $\R^d\backslash B_{\tilde r_{h+1}}$.\\
We have
\begin{equation*}
\begin{split}
I_{h+1}&=\int_{A_{k_{h+1},r_{h+1}}} |u(x)-k_{h+1}|^{p^*}dx=\int_{A_{k_{h+1},r_{h+1}}} (|u(x)-k_{h+1}|\varphi_h(x))^{p^*}dx\\
&\leq \int_{K_R} ((u(x)-k_{h+1})^+\varphi_h(x))^{p^*}dx.
\end{split}
\end{equation*}
Since $u\in W^{1,p}_0(\Omega)$, $(u-k_{h+1})^+\varphi_h\in W^{1,p}_0(K_R)$. Thus
\begin{equation*}
\begin{split}
I_{h+1}&\lesssim \left(\int_{K_R} |\nabla((u-k_{h+1})^+\varphi_h)|^{p}dx\right)^{\frac{p^*}{p}}\\
&\lesssim \left(\int_{A_{k_{h+1},\tilde r_{h}}} |\nabla u|^{p}dx+\int_{A_{k_{h+1},\tilde r_{h}}}(u-k_{h+1})^{p}dx\right)^{\frac{p^*}{p}}
\end{split}
\end{equation*}
where we use the notation $f\lesssim g$ in the sense there exists a constant $c>0$ such that $f\leq c g$. Since $\tilde r_h<r_h$, we have
\begin{equation*}
\begin{split}
I_{h+1}^{\frac{p}{p^*}}&\lesssim 2^{hp^*}\int_{A_{k_{h+1},r_{h}}}|u-k_{h+1}|^{p^*}dx +k_{h+1}^\alpha|A_{k_{h+1},r_{h}}|+|A_{k_{h+1},r_{h}}|^{\frac{p}{p^*}+\varepsilon}\\
&+2^{hp}\left(\int_{A_{k_{h+1},r_{h}}}|u-k_{h+1}|^{p^*}dx\right)^{\frac{p}{p^*}}|A_{k_{h+1},r_{h}}|^\delta\\
&+{\int_{A_{k_{h+1},r_{h}}}|u-k_{h+1}|^{p^*}dx}.
\end{split}
\end{equation*}
Moreover, for any $h$, $k_h\leq k_{h+1}$, this implies
\begin{equation}\label{Jh1}
\begin{split}
I_h&=\int_{A_{k_{h},r_{h}}}|u-k_{h}|^{p^*}dx \geq \int_{A_{k_{h+1},r_{h}}}|u-k_{h}|^{p^*}dx\\
&\geq \int_{A_{k_{h+1},r_{h}}}|k_h-k_{h+1}|^{p^*}dx=|A_{k_{h+1},r_{h}}||k_{h+1}-k_h|^{p^*}.
\end{split}
\end{equation}
Then, for any $k>k_0$ and $h\in \N$
$$  |A_{k_{h+1},r_{h}}|+k_{h+1}^\alpha|A_{k_{h+1},r_{h}}|\lesssim 2^{hp^*}I_h$$
where the constant in the notation depends only on $k_0$, $p$ and $\alpha$. Replacing in \eqref{Jh1}, we obtain
\begin{equation}\label{Jh2}
I_{h+1}^{\frac{p}{p^*}} \lesssim 2^{hp^*}I_h+ 2^{h(p+\varepsilon p^*)}I_h^{\frac{p}{p^*}+\varepsilon}+2^{h(p+\delta p^*)}I_h^{\frac{p}{p^*}+\delta}.
\end{equation}
Setting $M=\frac{p}{p^*}\max(p^*,\, p+\varepsilon p^*,\, p+\delta p^*)$ and $\theta=\min(1-\frac{p}{p^*},\,\varepsilon,\,\delta)$ and noting 
$$I_h\leq \int_{K_R} (|u-k_h|^+)^{p^*}dx\leq \int_{K_R}|u|^{p*}\leq \|u\|_{W^{1,p}_0}^{p*}.$$
Hence \eqref{Jh2} becomes
\begin{equation*}
I_{h+1}\lesssim 2^{hM}I_h^{1+\frac{\theta p^*}{p}}
\end{equation*}
where the constant depends on $||u||_{W^{1,p}_0}$, $k_0$, $\alpha$ and $p$. We need the following lemma to conclude.
\begin{lem}[Lemma 4.7, Chapter 2, \cite{LU}]
Let $(x_n)$ be a sequence such that $x_0\leq \lambda^{-\frac1\eta}\mu^{-\frac{1}{\eta^2}}$ and $x_{n+1}\leq \lambda\mu^nx_n^{1+\eta}$, for any $n\in \N^*$ with $\lambda$, $\eta$ and $\mu$ are positive constants and $\mu>1$. Then $(x_n)$ converges to $0$ as $n\rightarrow +\infty$.
\end{lem}
\noindent It suffices to prove that $I_0$ is small enough. Indeed $u\in L^{p^*}(\Omega)$ implies
$$I_0=\int_{A_{\frac{k}{2},R}} |u-\frac{k}{2}|^{p^*}dx \rightarrow 0 \quad\mbox{ as }\quad k\rightarrow \infty.$$
Hence for $k$ large enough, $I_0\leq C^{-\frac1\eta}(2^M)^{-\frac{1}{\eta^2}}$ with $\eta=\frac{\theta p^*}{p}$. Thus $I_h$ converges to $0$ as $h\rightarrow +\infty$ and 
$$\int_{A_{k,\frac{R}{2}}} |u-k|^{p^*}dx=0.$$
We deduce that $u\leq k$ on $K_{\frac{R}{2}}$. In the same way, we prove that $-u\leq k$ on $K_{\frac{R}{2}}$. Since $\overline \Omega$ is compact, we conclude that $u\in L^\infty(\Omega)$.
\end{proof}

\noindent \textbf{Proof of Proposition \ref{rg1}:} We follow the idea of the proof of Theorem 4.1 in \cite{FZ}.\\
Let $x_0\in \overline\Omega$, $B_{R}$ the ball of radius $R$ centered in $x_0$ and $K_R= \Omega\cap B_R$. We define
$$p^+\eqdef \max_{K_{R}} p(x) \quad \mbox{and} \quad p^-\eqdef \min_{K_{R}} p(x)$$
and we choose $R$ small enough such that $p^+< (p^-)^*=\frac{dp^-}{d-p^-}$. \\
Fix $(s,t)\in (\R_+^*)^2$, $t<s<R$ then  $K_t\subset K_s\subset K_R$. Define $\varphi\in
C^\infty(\Omega)$, $0\leq \varphi\leq 1$ such that
$$\varphi=\left\{\begin{array}{l l}
1 & \mbox{in } B_t,\\
0 & \mbox{in }\R^d\backslash B_s
\end{array}\right.$$
{satisfying $|\nabla\varphi|\lesssim 1/(s-t)$}. Let $k\geq 1$, using the same notations as previously $A_{k,\lambda}=\{y\in K_\lambda \ | \ u(y)>k\}$ and taking $\Psi=\varphi^{p^+}(u-k)^+\in \X$ in \eqref{ws}, we obtain
\begin{equation}\label{B3}
\begin{split}
\int_{A_{k,s}} |\nabla u|^{p(x)}\varphi^{p^+}\,dx&+p^+\int_{A_{k,s}} |\nabla u|^{p(x)-2}\nabla u\cdot\nabla \varphi
\varphi^{p^+-1}(u-k)^+\,dx\\
&=\int_{A_{k,s}} f\varphi^{p^+}(u-k)\,dx.
\end{split}
\end{equation}
Hence by Young inequality, for $\epsilon>0$, we have
\begin{equation*}
\begin{split}
p^+\int_{A_{k,s}} |\nabla u|^{p(x)-2}\nabla u.\nabla \varphi \varphi^{p^+-1}(u-k)\,dx& \\ 
\leq \varepsilon \int_{A_{k,s}} |\nabla u|^{p(x)}\varphi^{(p^+-1)\frac{p(x)}{p(x)-1}}\,dx
+c\varepsilon^{-1}&\int_{A_{k,s}} (u-k)^{p(x)}|\nabla\varphi|^{p(x)}\,dx.
\end{split}
\end{equation*}
Since $|\nabla\varphi|\leq c/(s-t)$ and for any $x\in K_R$, $p^+\leq (p^+-1)\frac{p(x)}{p(x)-1}$, we have
$\varphi^{(p^+-1)\frac{p(x)}{p(x)-1}}\leq \varphi^{p^+}$. This implies
\begin{equation}\label{est1}
\begin{split}
p^+\int_{A_{k,s}} |\nabla u|^{p(x)-2}\nabla u.\nabla \varphi \varphi^{p^+-1}(u-k)\,dx&\\
\leq \varepsilon \int_{A_{k,s}} |\nabla u|^{p(x)}\varphi^{p^+}\,dx 
+ c\varepsilon^{-1}&\int_{A_{k,s}} \left(\frac{u-k}{s-t}\right)^{p(x)}\,dx.
\end{split}
\end{equation}
Using H\"older inequality we estimate the right-hand side of \eqref{B3} as follows:
\begin{equation*}
\int_{A_{k,s}} f\varphi^{p^+}(u-k)\,dx\leq \|f\|_{L^q}\left(\int_{A_{k,s}}
(u-k)^{\frac{q}{q-1}}\,dx\right)^{\frac{q-1}{q}}.
\end{equation*}
Since $q>\frac{d}{p^-}$, we have $\frac{(p^-)^*}{p^-}\frac{q-1}{q}>1$. So, applying once again the H\"older inequality, we
obtain
\begin{equation}\label{est2}
\int_{A_{k,s}} f\varphi^{p^+}(u-k)\,dx\leq C \left(\int_{A_{k,s}}
(u-k)^{\frac{(p^-)^*}{p^-}}\,dx\right)^{\frac{p^-}{(p^-)^*}}|A_{k,s}|^\delta,
\end{equation}
where $\delta= \frac{q-1}{q}-\frac{p^-}{(p^-)^*}>0$. Set $\{u-k>s-t\}=\{x\in K_R\ |\ u(x)-k>s-t\}$ and
its complement as $\{u-k\leq s-t\}$. Now we split the integral in the right-hand side of \eqref{est2} on $\Theta=A_{k,s}\cap\{u-k>s-t\}$ and $A_{k,s}\backslash \Theta$:
\begin{equation}\label{est3}
\begin{split}
\mathcal I\eqdef &\int_{A_{k,s}} \left(\frac{u-k}{s-t}\right)^{(p^-)^*}\,dx+|A_{k,s}| \\
&\gtrsim \int_{\Theta} \left(\frac{u-k}{s-t}\right)^{\frac{(p^-)^*}{p^-}}(s-t)^{\frac{(p^-)^*}{p^-}}\,dx\\ &+ 
\int_{A_{k,s}\backslash \Theta} \left(\frac{u-k}{s-t}\right)^{\frac{(p^-)^*}{p^-}}(s-t)^{\frac{(p^-)^*}{p^-}}\,dx.
\end{split}
\end{equation}
In the same way, the second term in the right-hand side of \eqref{est1} can be estimated as follows.
\begin{equation}\label{est4}
\int_{\Theta} \left(\frac{u-k}{s-t}\right)^{p(x)}\,dx +\int_{A_{k,s}\backslash \Theta}
\left(\frac{u-k}{s-t}\right)^{p(x)}\,dx \lesssim \mathcal I.
\end{equation}
Finally, plugging \eqref{est1}-\eqref{est4} and we obtain for $\varepsilon$ small enough
\begin{equation*}
\int_{A_{k,s}} |\nabla u|^{p(x)}\varphi^{p^+}\,dx\lesssim  \mathcal I + |A_{k,s}|^\delta\mathcal I^{\frac{p^-}{(p^-)^*}}
\end{equation*}
where the constant depends on $p,\,R$ and $\varepsilon$. Moreover we have
$$\mathcal I^{\frac{p^-}{(p^-)^*}}\lesssim \left(\int_{A_{k,s}}
\left(\frac{u-k}{s-t}\right)^{(p^-)^*}\,dx\right)^{\frac{p^-}{(p^-)^*}}+|A_{k,s}|^{\frac{p^-}{(p^-)^*}}.$$
Hence using the Young inequality, we obtain the following estimate.
\begin{equation*}
\begin{split}
\int_{A_{k,t}}|\nabla u|^{p^-}\,dx\leq&\int_{A_{k,s}} |\nabla u|^{p(x)}\varphi^{p^+}\,dx+|A_{k,s}|\\
\lesssim& \int_{A_{k,s}} \left(\frac{u-k}{s-t}\right)^{(p^-)^*}\,dx +|A_{k,s}|^{\frac{p^-}{(p^-)^*}+\delta}\\
&+|A_{k,s}|+|A_{k,s}|^\delta\left(\int_{A_{k,s}}
\left(\frac{u-k}{s-t}\right)^{(p^-)^*}\!\!\!dx\right)^{\frac{p^-}{(p^-)^*}}.
\end{split}
\end{equation*}
\medskip
By Lemma \ref{fsb1}, we deduce that $u$ bounded in $\Omega$. \qed

\noindent Combining Proposition \ref{reg0} and \ref{rg1}, we have the following corollary:
\begin{cor}\label{reg2}
Let $p\in C(\bar\Omega)$ such that $p^-<d$ and $u\in W^{1,p(x)}_0(\Omega)$ satisfying
\begin{equation*}
\int_\Omega |\nabla u|^{p(x)-2}\nabla u \cdot\nabla \Psi dx=\int_\Omega(f(x,u)+g) \Psi dx, \quad \forall \Psi \in \
\X,
\end{equation*}
where $f$ satisfies $|f(x,t)|\leq c_1+c_2|t|^{r(x)-1}$ with $r\in C(\overline\Om)$ and $\forall x\in\overline \Om$, $1<r(x)<p^*(x)$ and $g\in L^{q}$, $q>\frac{d}{p_-}$. Then $u\in L^\infty(\Omega)$.
\end{cor}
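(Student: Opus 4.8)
\noindent\textbf{Proof proposal for Corollary~\ref{reg2}.} The plan is to rerun the De~Giorgi truncation scheme of Lemma~\ref{fsb1}, exactly as in the proofs of Propositions~\ref{reg0} and~\ref{rg1}, but carrying both source terms $f(x,u)$ and $g$ along at once. First I would localize: fix $x_0\in\overline\Omega$, let $B_R$ be the ball of radius $R<1$ centred at $x_0$, $K_R=\Omega\cap B_R$, and set $p^+=\max_{K_R}p$, $p^-=\min_{K_R}p$, $r^+=\max_{K_R}r$. Since $p,r$ are continuous and $r(x)<p^*(x)$ on the compact set $\overline\Omega$, for $R$ small enough $\max(p^+,r^+)<(p^-)^*=\frac{d\,p^-}{d-p^-}$. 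For $t<s<R$, choose a cutoff $\varphi\in C^\infty$ with $\varphi\equiv1$ on $B_t$, $\varphi\equiv0$ off $B_s$, $|\nabla\varphi|\lesssim(s-t)^{-1}$, and insert $\Psi=\varphi^{p^+}(u-k)^+$, $k\ge k_0\ge1$, as a test function, exactly as in Proposition~\ref{rg1}. This yields a Caccioppoli-type identity bounding $\int_{A_{k,s}}|\nabla u|^{p(x)}\varphi^{p^+}$ by the gradient--cutoff cross term plus $\int_{A_{k,s}}\big(f(x,u)+g\big)\varphi^{p^+}(u-k)\,dx$.

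The gradient--cutoff term and the $g$-contribution I would handle verbatim as in Proposition~\ref{rg1}: Young's inequality together with $\varphi^{(p^+-1)p(x)/(p(x)-1)}\le\varphi^{p^+}$ absorbs the former into the left side up to a $\int_{A_{k,s}}\big(\tfrac{u-k}{s-t}\big)^{p(x)}$ term, and the Hölder chain built on $q>\tfrac{d}{p^-}$ turns $\int_{A_{k,s}}g\,\varphi^{p^+}(u-k)$ into $C\big(\int_{A_{k,s}}(u-k)^{(p^-)^*/p^-}\big)^{p^-/(p^-)^*}|A_{k,s}|^\delta$ with $\delta=\tfrac{q-1}{q}-\tfrac{p^-}{(p^-)^*}>0$; the splitting of $A_{k,s}$ along $\{u-k>s-t\}$ of \eqref{est3}--\eqref{est4} then reduces these to the shapes admitted by Lemma~\ref{fsb1}.

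The only new ingredient is $\int_{A_{k,s}}f(x,u)\varphi^{p^+}(u-k)\,dx$, which I would estimate as in \cite{FZ} (the proof of Proposition~\ref{reg0}). Since $r(x)>1$, the growth bound gives $|f(x,u)|\lesssim1+|u|^{r^+-1}$, and on $A_{k,s}$, where $0<k\le u$, $|u|^{r^+-1}\lesssim(u-k)^{r^+-1}+k^{r^+-1}$, so
\[
\int_{A_{k,s}}f(x,u)\varphi^{p^+}(u-k)\,dx\ \lesssim\ \int_{A_{k,s}}(u-k)^{r^+}dx+\int_{A_{k,s}}(u-k)\,dx+k^{r^+-1}|A_{k,s}|.
\]
Using $R<1$ and splitting $A_{k,s}$ along $\{u-k>s-t\}$ once more, the first two integrals are dominated by $\int_{A_{k,s}}\big(\tfrac{u-k}{s-t}\big)^{(p^-)^*}dx+|A_{k,s}|$, since $r^+<(p^-)^*$ and $1<(p^-)^*$, while $k^{r^+-1}|A_{k,s}|$ is already of the form $k^{\alpha}|A_{k,s}|$ with $\alpha=r^+-1\in(0,(p^-)^*)$. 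Hence the $f(x,u)$-term also collapses into the four admissible shapes, and together with the previous paragraph the hypothesis of Lemma~\ref{fsb1} holds on $K_R$, with constants independent of $k\ge k_0$.

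The step I expect to be the main obstacle is precisely this bookkeeping: checking that \emph{every} term produced by $f(x,u)$ and $g$ is majorised by one of the four shapes of Lemma~\ref{fsb1} — a $\int|(u-k)/(R(1-\sigma))|^{(p^-)^*}$, a $k^{\alpha}|A_{k,R}|$ with a fixed $\alpha\in(0,(p^-)^*)$, an $|A_{k,R}|^{p^-/(p^-)^*+\varepsilon}$, or a $\big(\int|(u-k)/(R(1-\sigma))|^{(p^-)^*}\big)^{p^-/(p^-)^*}|A_{k,R}|^{\delta}$ — with all constants uniform in $k$ and over a finite cover of $\overline\Omega$ by such balls. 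The subcriticality assumption $r(x)<p^*(x)$ is used exactly here: it forces $r^+<(p^-)^*$ on small balls, which makes $r^+-1$ an admissible exponent and keeps every power of $|A_{k,R}|$ strictly positive. Granting this, Lemma~\ref{fsb1} (via Lemma~4.7, Ch.~2 of \cite{LU}) gives $u\in L^\infty(K_{R/2})$, and the compactness of $\overline\Omega$ concludes that $u\in L^\infty(\Omega)$.
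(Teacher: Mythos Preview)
Your proposal is correct and is precisely the argument the paper has in mind: the paper does not spell out a proof of Corollary~\ref{reg2} at all, but simply states that it follows by ``combining Proposition~\ref{reg0} and~\ref{rg1}'', i.e.\ by rerunning the De~Giorgi truncation of Lemma~\ref{fsb1} while carrying both right-hand sides, which is exactly what you do. One small bookkeeping point in the step you yourself flag: from $|u|^{r^+-1}\lesssim (u-k)^{r^+-1}+k^{r^+-1}$ you actually obtain $k^{r^+-1}\!\int_{A_{k,s}}(u-k)$ rather than $k^{r^+-1}|A_{k,s}|$; a Young inequality $k^{r^+-1}(u-k)\le C\big((u-k)^{r^+}+k^{r^+}\big)$ turns this into $\int(u-k)^{r^+}+k^{r^+}|A_{k,s}|$, which is still of the admissible shape with $\alpha=r^+\in(0,(p^-)^*)$.
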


\end{document}